\definecolor{darkblue}{rgb}{0,0,0.75}
\definecolor{darkred}{rgb}{0.75,0,0}
\definecolor{darkgreen}{rgb}{0,0.75,0}   
\newtheorem{thm}{Theorem}[section]
\newtheorem{lem}[thm]{Lemma}
\newtheorem{prop}[thm]{Proposition}
\newtheorem{cor}[thm]{Corollary}
\theoremstyle{definition}
\newtheorem{defi}[thm]{Definition}
\newtheorem{rmk}[thm]{Remark}
\newtheorem{clm}[thm]{Claim}
\numberwithin{equation}{section}
\newcommand{\s}{{\Bar{s}}}
\newcommand{\gb}{\Bar{\gamma}}
\newcommand{\tb}{\Bar{T}}
\newcommand{\nb}{\Bar{N}}
\newcommand{\kb}{\Bar{k}}
\newcommand{\tp}{{t^\prime}}
\newcommand{\fapt}{\forall t\in \left(0,T\right)}
\newcommand{\att}{(\cdot,t)}
\newcommand{\gbb}{\Bar{\Bar{\gamma}}}
\newcommand{\sbb}{\Bar{\Bar{s}}}
\title{Curve Shortening Flow of Space Curves with Convex Projections}
\author{Qi Sun}
\date{\today}
\address{Qi Sun, Department of Mathematics, University of Wisconsin-Madison}
\email{qsun79@wisc.edu}
\begin{document}
\begin{abstract}
We show that under Space Curve Shortening flow any closed immersed curve in $\mathbb R^n$ whose projection onto  $\mathbb{R}^2\times\{\vec{0}\}$  is convex remains smooth until it shrinks to a point. Throughout its evolution, the projection of the curve onto $\mathbb{R}^2\times\{\vec{0}\}$ remains convex.

As an application, we show that any closed immersed curve in $\mathbb R^n$ can be perturbed to an immersed curve in $\mathbb R^{n+2}$ whose evolution by Space Curve Shortening shrinks to a point.
\end{abstract}
\maketitle
\section{Introduction}
\indent   
\subsection*{Background}
Let us consider the Curve Shortening flow (CSF) in higher codimension:
\begin{equation}
    \gamma_t=\gamma_{ss}
\end{equation}
where $\gamma:S^1\times \left[0,T\right)\rightarrow\mathbb{R}^n$ is smooth ($S^1=\mathbb R/2\pi\mathbb Z$), $u\rightarrow \gamma(u,t)$ is an immersion and $\partial_s=\frac{\partial}{\partial s}$ is the derivative with respect to arc-length, defined by
$$\frac{\partial}{\partial s}:=\frac{1}{|\gamma_u|}\frac{\partial}{\partial u}.$$
Let $k\geq 0$ be the curvature, $T$ be the unit tangent vector and $N$ be the unit normal vector of $\gamma$.
At an inflection point ($k=0$), $N$ is not well defined for space curves, but $kN=T_s=\gamma_{ss}$ still makes sense.

For the planar case, the well known Gage-Hamilton theorem \cite{GageHamilton} says that convex curves shrink to a point and become asymptotically circular. Gage and Hamilton's proof built on earlier work by Gage \cite{Gage1,Gage2} and was extended by Grayson in \cite{Grayson}. Grayson showed that embedded planar curves become convex. Since then, many other proofs of the Gage-Hamilton-Grayson theorem have been discovered, notably, \cite{Hamilton+1996+201+222,huisken1998distance,Andrewsnoncollapsing,AndrewsBryan+2011+179+187,andrews2011comparison}.

Curve Shortening flow in higher codimension has been studied in a number of papers including \cite{AltschulerGrayson,Altschuler,altschuler2013zoo,hättenschweiler2015curve,benes2020longterm}. 
See also \cite{he2012distance,yang2005curve,ma2007curve,khan2012condition,corrales2016non,minarvcik2019comparing,pan2021singularities,litzinger2023singularities}.
For results of mean curvature flow in high codimension, we refer to the pioneering work \cite{wang2002long,andrews2010mean}, lecture notes \cite{wang2011lectures} and a survey \cite{smoczyk2011mean}.
      
The higher dimensional case is much less understood than the planar case. As pointed out by Altschuler in \cite[Figure 1]{Altschuler}, preservation of embeddedness, a prerequisite of Grayson's theorem, does not hold 
in general in higher dimensions. However, \emph{projection convexity}, a more restrictive condition than embeddedness, is preserved by CSF in $\mathbb{R}^n$. 
As far as we know, the earliest result in this direction is given in \cite[$\S3.9$]{hättenschweiler2015curve} and then in \cite{benes2020longterm}.
Our purpose in this paper is to investigate along these lines and our results generalize the first part of the Gage-Hamilton theorem to space curves with one-to-one convex projections onto $\mathbb{R}^2$. \\

\subsection*{Notation} 
\subsubsection*{Projection curve}     
Let $P_{xy}:\mathbb{R}^n=\mathbb{R}^2\times\mathbb R^{n-2}\rightarrow\mathbb{R}^2$ be the orthogonal projection onto the first two coordinates, which we call $x$ and $y$. And let
$P_{xy}|_\gamma:\gamma\rightarrow xy$-plane be its restriction to the space curve $\gamma$.
      
Let us define the \emph{projection curve} in the $xy$-plane:
      $$\gb:=P_{xy}\circ\gamma$$ 
      
      A bar on any variable will denote its association to the projection curve $\gb$. 
      To illustrate, $\Bar{k}$, $ \Bar{T}$, $\Bar{N}$, $\Bar{s}$ denote the curvature,  the unit tangent vector, the unit normal vector and the arc-length parameter of $\gb$. We sometimes abuse notation and also use $\gb$ to denote the image of the projection curve.

\subsubsection*{Planar convexity}
\begin{defi}
\label{definitions of three notions of convexity}
    For a continuous curve $\Bar{\gamma}:S^1\rightarrow\mathbb{R}^2$,\\
    $(a)$ A continuous curve $\Bar{\gamma}$ is said to be \emph{convex} if it is simple and its union with its interior is a convex set. \\
    $(b)$ An immersed curve $\Bar{\gamma}$ is said to be \emph{strictly convex} if it intersects each line in the plane in at most two points.\\
    $(c)$ An immersed curve $\Bar{\gamma}$ is said to be \emph{uniformly convex} if it is simple and curvature $\Bar{k}>0$.
\end{defi}

In part $(a)$, the interior is well defined because of the Jordan curve theorem. In part $(b)$, 
a strictly convex curve is actually simple because otherwise one can find a line that passes through one self-intersection of $\gb$ and another point on $\gb$. Such a line would pass through at least three points of $\gb$, which contradicts the definition.

Uniform convexity implies strict convexity, which implies convexity, but not the other way around. Actually, convex curves can contain straight line segments but strictly convex curves cannot. 
And the curvature of strictly convex curves can vanish at some points, which does not happen for uniformly convex curves.

\subsubsection*{Verticality and horizontality}
\begin{defi}
A tangent line of a curve in $\mathbb R^n$ $(n\geq3)$ is said to be \emph{vertical} if it is perpendicular to the $xy$-plane.
\end{defi}
\begin{defi}
A vector in $\mathbb R^n$ $(n\geq3)$ is said to be \emph{horizontal} if it is parallel to the $xy$-plane.
\end{defi}
\begin{rmk}
\label{remarks about vertial tangent lines}
The projection curve $\Bar{\gamma}$ can fail to be immersed when $\gamma$ is immersed but has a vertical tangent line. 
And this will keep us from analyzing $\gamma$ via its projection $\gb$, since the curvature $\kb$ of $\gb$ might not make sense.\\
\end{rmk}    
    
\subsection*{Statement of the main results}
\begin{thm}
\label{main theorem}
Consider an embedded smooth curve $\gamma_0$ in $\mathbb{R}^n(n\geq 2)$, for which $P_{xy}|_{\gamma_0}$ is injective and the projection curve $\Bar{\gamma}_0$ is convex.
     Let $\gamma:S^1\times \left[0,T\right)\rightarrow\mathbb{R}^n$ be the solution to the space CSF with $\gamma(u,0)=\gamma_0(u)$. Then we have:
  \begin{enumerate}[label=(\alph*)]
     \item \label{Main theorem, preservation of convexity} 
      For each $ t\in \left(0,T\right)$, $P_{xy}|_{\gamma\att}$ is injective, $\gamma(\cdot,t)$ has no vertical tangent lines and the projection curve $\Bar{\gamma}\att$ is uniformly convex. 
     \item As $t\rightarrow T$, $\gamma\att$ shrinks to a point.
  \end{enumerate}
\end{thm}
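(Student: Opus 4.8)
The plan is to build everything on one elementary observation: for any \emph{fixed} $w\in\mathbb R^n$ the scalar $\langle\gamma,w\rangle$ satisfies the one-dimensional heat equation $\partial_t\langle\gamma,w\rangle=\partial_{ss}\langle\gamma,w\rangle$ along the flow, since $\partial_s$ differs from $\partial_u$ only by the positive factor $1/|\gamma_u|$. Two consequences drive the argument. First, the maximum principle applied in every direction confines $\gamma\att$ to $\mathrm{conv}(\gamma_0)$; moreover, for horizontal $w$, the quantity $\max_u\langle\gb\att,w\rangle$ is nonincreasing in $t$ in every direction, so the convex hulls $\mathrm{conv}(\gb\att)$ are nested and shrinking. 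Second, the Sturm--Angenent theorem applied to $\langle\gamma,w\rangle-c$ (horizontal $w$, $c\in\mathbb R$) shows that the number of intersection points of $\gb\att$ with any fixed line in the plane is nonincreasing in $t$ and drops strictly whenever a tangential, higher-order intersection occurs. This is the engine for part (a), and the first consequence is one half of part (b).

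For part (a): since $\gb_0$ is convex it meets every line in a connected set, so $\langle\gamma_0,w\rangle-c$ has $0$ or $2$ sign changes around $S^1$; by Sturm--Angenent this persists, and since heat-equation solutions are spatially analytic for $t>0$, $\gb\att$ contains no straight segments, hence for every $t>0$ the curve $\gb\att$ meets each line in at most two points --- so it bounds a convex set and is in fact strictly convex. The delicate point is ruling out vertical tangents of $\gamma\att$ for $t>0$; where $\gamma\att$ is vertical one has $\gb_u=0$, and I would obtain the needed bound either from a maximum principle for $\rho^2:=|P_{xy}(\gamma_s)|^2=|\gb_s|^2$, which evolves by $\partial_t\rho^2=\partial_{ss}\rho^2-2|\gb_{ss}|^2+2k^2\rho^2$ (with $k^2=|\gb_{ss}|^2+|z_{ss}|^2$ when $\gamma=(\gb,z)$), or from the Sturm count applied to a supporting line at a would-be vertical point, at which $\langle\gamma,w\rangle-c$ acquires a multiple zero. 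Once vertical tangents are excluded, $\gb\att$ is a genuine immersed strictly convex curve, its curvature $\kb$ solves a bona fide parabolic equation, the strong maximum principle gives $\kb>0$ (uniform convexity), and injectivity of $P_{xy}|_{\gamma\att}$ --- hence embeddedness of $\gamma\att$ --- follows because a uniformly convex planar curve is simple.

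For part (b): the maximal time is finite because $\tfrac{d}{dt}\,\mathrm{length}(\gamma\att)=-\oint k^2\,ds\le-\big(\oint k\,ds\big)^2/\mathrm{length}(\gamma\att)\le-4\pi^2/\mathrm{length}(\gamma\att)$, using Cauchy--Schwarz and Fenchel's bound $\oint k\,ds\ge2\pi$ for closed curves; hence $\mathrm{length}(\gamma\att)^2\le\mathrm{length}(\gamma_0)^2-8\pi^2 t$ and $T\le\mathrm{length}(\gamma_0)^2/(8\pi^2)$. To show $\gamma\att$ shrinks to a point, I would first show the projection does: up to tangential reparametrisation $\gb\att$ moves with normal velocity $\rho^2\kb\,\nb$, so its enclosed area obeys $\tfrac{d}{dt}\bar A(t)=-\oint\rho^2\kb\,\s=-2\pi+\oint(1-\rho^2)\kb\,\s$, and an isoperimetric monotonicity of Gage--Hamilton type for this flow (controlling $\mathrm{length}(\gb\att)^2/\bar A(t)$) prevents $\gb\att$ from degenerating into a thin sliver, forcing $\mathrm{conv}(\gb\att)$ to contract to a single point. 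Once the projection collapses, $z$ solves the heat equation $z_t=z_{ss}$ on $S^1$ with evolving metric of bounded total length, so its oscillation cannot stay bounded below while $\gb\att$ shrinks, and therefore $\gamma\att$ converges to a point; a more robust alternative for this last step is a blow-up analysis via Huisken's monotonicity formula, in which preservation of projection convexity forces every tangent flow at the singular time to be a multiplicity-one shrinking round circle, so $\gamma\att$ shrinks to a round point.

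The step I expect to be the genuine obstacle is the sharpening in part (a): promoting \emph{``the projection stays convex''}, which drops out of Sturm--Angenent almost for free, to \emph{``$\gamma\att$ has no vertical tangent and $\kb>0$ for $t>0$''}. This is exactly where the coupling between the height $z$ and the planar curve $\gb$ must be used --- the reaction terms in the evolutions of $\rho^2$ and of $|z_s|^2$ are not manifestly signed --- and a successful argument will have to combine a maximum principle with the confinement and Sturm information above; the isoperimetric estimate in part (b) then rests on the uniform convexity secured there.
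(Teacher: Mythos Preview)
Your outline for part~(a) is close to the paper's in spirit: Sturm--Angenent applied to $\langle\gamma,w\rangle-c$ for horizontal $w$ does preserve convexity of $\gb$, and once vertical tangents are excluded the strong maximum principle on the evolution of $\kb$ gives uniform convexity. For the vertical-tangent step itself the paper does \emph{not} use the evolution of $\rho^2$ (whose reaction term, as you observe, has no sign). Instead it applies Sturm to $h=\langle\gamma_s,w\rangle$, which satisfies $h_t=h_{ss}+k^2h$: for each horizontal $w$ and each $t>0$ this has exactly two simple zeros, the global maximum and minimum of $\langle\gamma(\cdot,t),w\rangle$. A vertical tangent at $u_0$ would make $u_0$ a zero for \emph{every} $w\in S^1$, hence the global max or min for every $w$. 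The sets $A_M,A_m\subset S^1$ of directions for which $u_0$ is the max (resp.\ min) are then closed, nonempty, disjoint (else $\langle\gamma,w\rangle$ would be constant, contradicting finiteness of zeros), and cover $S^1$, contradicting connectedness. Your ``supporting-line'' Sturm idea runs into exactly this: one needs a $w$ for which $u_0$ is \emph{not} an extremum, and what the connectedness argument shows is that no such $w$ can fail to exist.

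The genuine gap is in part~(b). Neither mechanism you propose is justified. The projection evolves by $\gb_t^\perp=c\,\kb\,\nb$ with $c=|\gb_s|^2\in(0,1]$ \emph{variable along the curve and in time}, and Gage's isoperimetric monotonicity does not obviously survive a variable-speed factor; your area identity gives only $-2\pi\le\tfrac{d}{dt}\bar A\le0$, which does not rule out $\gb\att$ collapsing to a segment. For the height, ``$z_t=z_{ss}$ on a curve of bounded total length'' does not force $\operatorname{osc} z\to0$: you have not shown the arclength of $\gamma$ tends to $0$, nor that $c$ stays bounded away from $0$ as $t\to T$. The blow-up alternative likewise needs a classification of self-shrinkers in $\mathbb R^n$ with convex planar projection, which you have not supplied.

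The idea you are missing is what the paper calls the \emph{three-point condition}: $\gamma$ satisfies it with constant $\Delta$ if every plane meeting $\gamma$ in at least three points has slope $\le\Delta$ over the $xy$-plane. Intersection of $\gamma$ with a fixed plane is the zero set of a linear heat solution, so by Sturm this condition is preserved with the \emph{same} $\Delta$. It yields immediately $|z(q_1)-z(q_2)|\le\Delta\,|\gb(q_1)-\gb(q_2)|$ for all $q_1,q_2\in\gamma\att$, hence $\operatorname{diam}\gamma\att\le\sqrt{1+\Delta^2}\,\operatorname{diam}\gb\att$ and the uniform lower bound $c\ge(1+(n-2)\Delta^2)^{-1}$. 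This reduces everything to showing that $\gb$ shrinks to a point under $\gb_t^\perp=c\kb\nb$ with $0<\delta\le c\le1$. The paper proves that not by isoperimetric monotonicity but by a barrier/extension argument: if the limit set $D=\bigcap_tD(t)$ contained a nontrivial segment, comparison of $y$ and of $x_s$ on the upper branch with $\varphi=\epsilon e^{-\lambda t}\sin(\pi x/M)$ yields first a uniform width and then interior gradient bounds, whence Nash--Moser and Schauder bound the curvature of $\gamma$ up to $T$ and the flow would extend past $T$, a contradiction.
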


\subsubsection*{Proof strategy}
Our proof mainly relies on constructing barriers (Lemma \ref{construction of the barrier}, Lemma \ref{not double line segment}).
This barrier was used in \cite[Proof of Lemma 2.8 on Page 83]{grayson1989shortening} to compare with curvature and recently was also used in \cite[Lemma 4.3]{mitake2024quenching} as a subsolution. We also use this barrier to get gradient estimates (Lemma \ref{gradient bounds for x direction}, Lemma \ref{gradient bounds}).

\subsubsection*{Supportive examples}
\cite[Section 3]{altschuler2013zoo} provides explicit examples of space CSF that have one-to-one convex projections by solving ODE. Such concrete evolutions of curves support our Theorem \ref{main theorem}. See also \cite{stryker2019construction} where Stryker and Sun rediscovered those examples. 

\subsubsection*{Features of CSF with convex projections}
For a curve $\gamma\att$ satisfying part \ref{Main theorem, preservation of convexity} of Theorem \ref{main theorem}, 
$\gamma\att$ is an embedded curve with $k\att>0$ for all $t\in \left(0,T\right)$. This was already pointed out in \cite{hättenschweiler2015curve}\cite{benes2020longterm}. Without the convex projection assumption, inflection points ($k=0$) can develop even when the curvature 
of the initial curve is positive at all points, as suggested in   \cite[the last paragraph of page 286]{AltschulerGrayson} by an example.

     For curves without one-to-one convex projections, the property that $\gamma$ has no vertical tangent lines does not hold for all positive time in general. In Lemma \ref{detailed analysis that example that no vertical tangent lines is not preserved} we will exhibit an explicit example of a solution of CSF which implies the following result. 
     \begin{lem}
     \label{example that no vertical tangent lines is not preserved}
    There exists a curve $\gamma_0\subset\mathbb R^3$ that has no vertical tangent lines, while $\gamma(\cdot,t_0)$ has a vertical tangent line at some time $t_0\in(0,T)$.
    Actually $\gamma_0$ can be taken to have a locally convex but not one-to-one projection onto the $xy$-plane.
\end{lem}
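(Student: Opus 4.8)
\emph{Proof strategy.} The plan is to produce $\gamma_0$ by an essentially explicit construction, in the spirit of the ODE‑reduced evolutions of \cite{altschuler2013zoo,stryker2019construction}, and then to read the conclusion off the (reduced) solution; the computation is carried out in Lemma \ref{detailed analysis that example that no vertical tangent lines is not preserved}. First I would look for a configuration whose symmetry is preserved by CSF, so that $\gamma_t=\gamma_{ss}$ collapses to a small ODE system. A natural choice is a closed curve $\gamma_0\subset\mathbb R^3$ invariant under the isometry $\rho(x,y,z)=(-x,-y,z)$ (rotation by $\pi$ about the $z$‑axis) combined with the reparametrization $u\mapsto u+\pi$. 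Then $\gb_0=P_{xy}\circ\gamma_0$ is centrally symmetric and winds twice around the origin: taking $\gb_0$ to be a small perturbation of a doubly covered round circle keeps $\kb_0>0$ everywhere (so $\gb_0$ is locally convex) while $\gb_0$ is manifestly not one‑to‑one, and lifting it by a height function $z_0(u)$ that is odd under $u\mapsto u+\pi$ and separates the two strands can be arranged so that $\gamma_0$ is embedded and $|\partial_u(P_{xy}\circ\gamma_0)|>0$ everywhere, i.e.\ $\gamma_0$ has no vertical tangent line. Since $\rho$ is an isometry of $\mathbb R^3$ and CSF commutes with isometries, the solution $\gamma\att$ retains this symmetry throughout its maximal smooth interval $[0,T)$.

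\emph{Appearance of a vertical tangent.} A vertical tangent at a point $p$ is exactly the condition $|z_s(p)|=1$, equivalently that $P_{xy}|_{\gamma\att}$ ceases to be immersed at $p$; writing $\phi\in[-\tfrac\pi2,\tfrac\pi2]$ for the angle between $\gamma_s$ and the $xy$‑plane we have $|z_s|=|\sin\phi|$, and the $z$‑component of $\gamma_t=\gamma_{ss}$ is the one‑dimensional heat equation $z_t=z_{ss}$, which together with $\partial_t(ds)=-k^2ds$ and the symmetry governs the scalar $z_s$. I would then track $\max_u|z_s(\cdot,t)|$ along the reduced solution and exhibit a first time $t_0\in(0,T)$ at which it reaches $1$: at $t=0$ it is $<1$ by construction, and the turning number of $\gb\att$ stays equal to $2$ as long as $|z_s|<1$, so that for $\gamma$ to proceed toward a round‑point collapse the projection must ``un‑wind'' to turning number one, which it can do only by passing through a non‑immersed configuration — precisely a vertical tangent of $\gamma$. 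Comparison with explicit sub/supersolutions of the barrier type (Lemma \ref{construction of the barrier}, Lemma \ref{not double line segment}) is the natural tool both for locating $t_0$ and for certifying smoothness of $\gamma$ up to $t_0$; if the reduced system is integrable, $t_0$ is simply read off the closed form.

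\emph{Main obstacle.} The difficulty is technical rather than conceptual. One must verify that the symmetric ansatz genuinely reduces $\gamma_t=\gamma_{ss}$ to an ODE (or otherwise exhibit the solution), that the resulting $\gamma\att$ is smooth with $P_{xy}|_{\gamma\att}$ still immersed on the whole interval $(0,t_0)$ — so that $\s$ and $\kb$ are meaningful there — and, crucially, that the first vertical tangent occurs strictly before the singular/extinction time $T$ and is not already present at $t=0$ (this is exactly the point where the hypotheses of Theorem \ref{main theorem} fail, so one has no a priori control and must rely on the explicit structure, as opposed to the embedded‑with‑positive‑curvature behaviour valid under projection convexity). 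Checking that $\gb_0$ can simultaneously be locally convex, non‑injective, and lifted with everywhere non‑vertical tangent is a routine explicit computation once a concrete perturbation of the doubly covered circle is written down; the delicate part is the continuation analysis of the reduced solution and ruling out the possibility that the projection degenerates only in the limit $t\to T$.
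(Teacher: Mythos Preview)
Your proposal has two genuine gaps, and the paper takes a rather different route.

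\textbf{The ODE reduction does not exist.} A discrete $\mathbb Z_2$ symmetry such as invariance under $(x,y,z)\mapsto(-x,-y,z)$ together with $u\mapsto u+\pi$ is preserved by CSF, but it does \emph{not} collapse $\gamma_t=\gamma_{ss}$ to an ODE system. The ODE reductions in \cite{altschuler2013zoo,stryker2019construction} come from continuous (e.g.\ helicoidal) symmetry groups or from separation ans\"atze; a finite symmetry only cuts the number of independent Fourier modes in half and leaves you with a PDE. So the mechanism you rely on to ``read off $t_0$ in closed form'' is not available.

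\textbf{The turning--number argument is circular.} You argue that the projection must unwind from turning number $2$ to $1$ because the curve ``proceeds toward a round--point collapse''. But round--point collapse is exactly what is \emph{not} guaranteed once projection convexity fails --- that is the whole point of the Lemma. Without Theorem~\ref{main theorem} you have no a priori information about the singularity, and the projection could perfectly well keep turning number $2$ right up to $t=T$ (or develop a type~II singularity). You acknowledge this in your last paragraph, but you offer no replacement mechanism; the barriers of Lemmas~\ref{construction of the barrier} and~\ref{not double line segment} are designed for the convex--projection setting and do not help here.

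\textbf{What the paper does instead.} The paper builds a one--parameter family $\gamma_0^\epsilon(u)=\bigl((\cos u+1-\epsilon)\cos u,\,(\cos u+1-\epsilon)\sin u,\,\sin u\bigr)$, whose projection is a lima\c{c}on (locally convex with one self--intersection for small $\epsilon>0$, a cardioid with a cusp at $\epsilon=0$). The relevant symmetry is rotation by $\pi$ about the $x$--axis, not the $z$--axis; it pins the tangent at $u=\pi$ to the $yz$--plane for all time. At $\epsilon=0$ that tangent is already vertical at $t=0$, and the Sturmian theorem applied to the planes $y=0$ and $y=2z$ shows that for the $\epsilon=0$ flow the $y$--component of $\gamma_u(\pi,t)$ is strictly negative for $t>0$. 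Since $\gamma_{0u}^\epsilon(\pi)=(0,\epsilon,-1)$ has positive $y$--component, smooth dependence on initial data forces that component to pass through $0$ at some $t_\epsilon\in(0,T^\epsilon)$ when $\epsilon>0$ is small --- producing the vertical tangent. No ODE reduction and no asymptotic--shape information is used; the argument is a perturbation off a degenerate curve combined with Sturm.
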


\subsection*{Previous results of H{\"a}ttenschweiler and Minar\v{c}\'{\i}k \& Bene\v{s}}

As demonstrated by \cite[Theorem 3.59]{hättenschweiler2015curve},
if $P_{xy}|_\gamma$ is injective and the projection curve $\Bar{\gamma}$ is uniformly convex at the initial time then it remains so and shrinks to a line segment or a point. 
Later \cite{benes2020longterm} showed that when $n=3$, if $P_{xy}|_\gamma$ is injective and the projection curve $\Bar{\gamma}$ is convex at initial time, it remains so under the extra assumption that all tangent lines of $\gamma(\cdot,t)$ are not vertical for all time $t\in\left[0,T\right)$. Particularly, curves in \cite{benes2020longterm} are slightly more general because the projection curves $\gb$ are not necessarily uniformly convex however the authors of \cite{benes2020longterm} require an extra assumption. It seems that authors of \cite{benes2020longterm} were unaware of the earlier work \cite{hättenschweiler2015curve} and they used a different method.

To prove that projection convexity is preserved, \cite{benes2020longterm} assume 
\emph{in advance} that $\gamma(\cdot,t)$ has no vertical tangent lines \emph{for all $t\in \left[0,T\right)$}. We show that it suffices to assume that the initial curve has a one-to-one convex projection, and that no assumptions on the solutions at positive times are needed. In \cite{hättenschweiler2015curve} H{\"a}ttenschweiler had done the same under the assumption that the initial curve has a one-to-one \emph{uniformly convex} projection.

To compare with \cite[Theorem 3.59]{hättenschweiler2015curve}, most importantly, we rule out the possibility that the limit curve is a line segment. 

In addition, we only assume the initial projection curve $\gb(\cdot,0)$ to be convex while \cite[Theorem 3.59]{hättenschweiler2015curve} assumes $\gb(\cdot,0)$ to be uniformly convex. Also \cite[Theorem 3.59]{hättenschweiler2015curve} is assuming implicitly that the initial curve $\gamma(\cdot,0)$ has no vertical tangent lines.
Our result holds even when the initial curve $\gamma(\cdot,0)$ has vertical tangent lines, 
which is a previously unconsidered case, as far as we know. In other words, we allow the initial projection curve $\gb(\cdot,0)$ to be non-immersed, such as a polygon, as long as the space curve $\gamma(\cdot,0)$ is immersed.

In summary, our Theorem \ref{main theorem} is new in two aspects. We consider more general curves and give a more precise description on limit curves.

\subsection*{Perturbing planar figure eights}
Our original motivation for considering curves with convex projections comes from the question of how small initial perturbations 
$(\cos u,\epsilon \sin u, \sin2u)$ (the right figure in Figure \ref{A planar figure eight and its perturbation})
of the planar figure eight $(\cos u,0, \sin2u)$ (the left figure in Figure \ref{A planar figure eight and its perturbation}) evolve. The red curve is the corresponding projection curve $(\cos u,\epsilon \sin u, 0)$ in the $xy$-plane.
\begin{figure}[h]
    \centering
\includegraphics[width=1\linewidth]{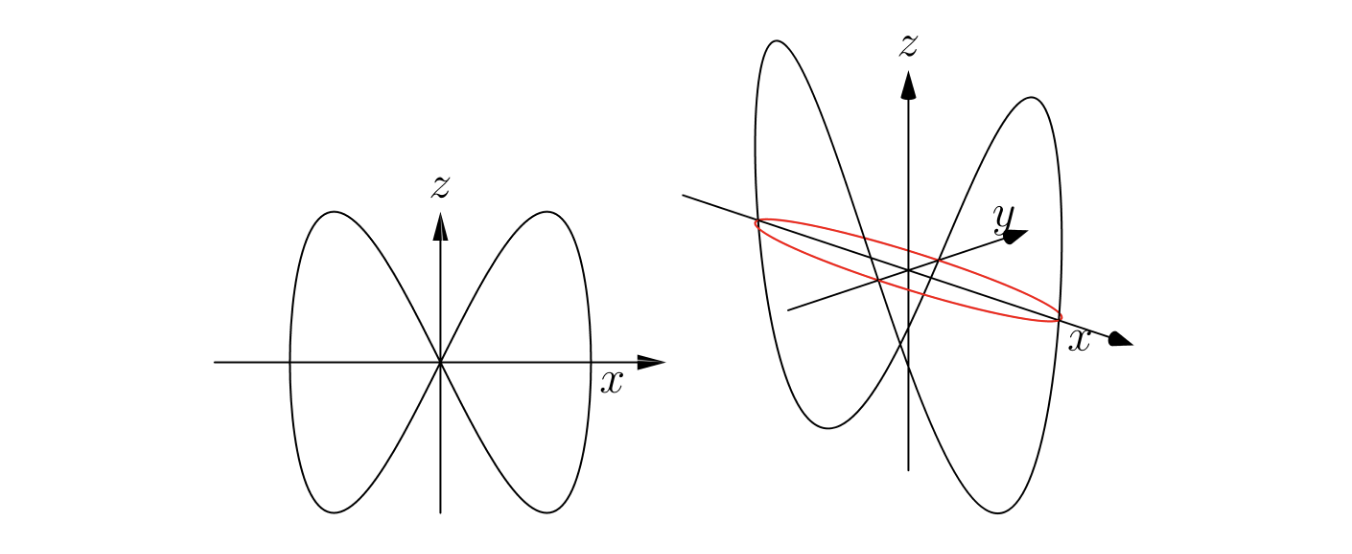}
\caption{A planar figure eight and its perturbation}
\label{A planar figure eight and its perturbation}
\end{figure}

\begin{figure}[ht!]
\centering
\begin{minipage}{0.3\textwidth}
\centering
\includegraphics[width=\linewidth]{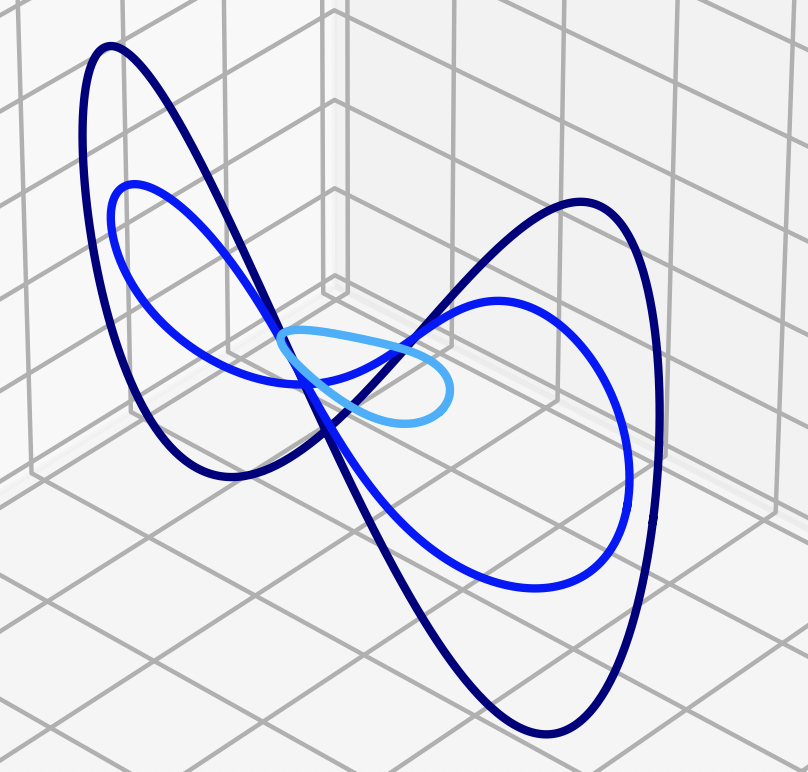}
\end{minipage}\hfill
\begin{minipage}{0.3\textwidth}
\centering
\includegraphics[width=\linewidth]{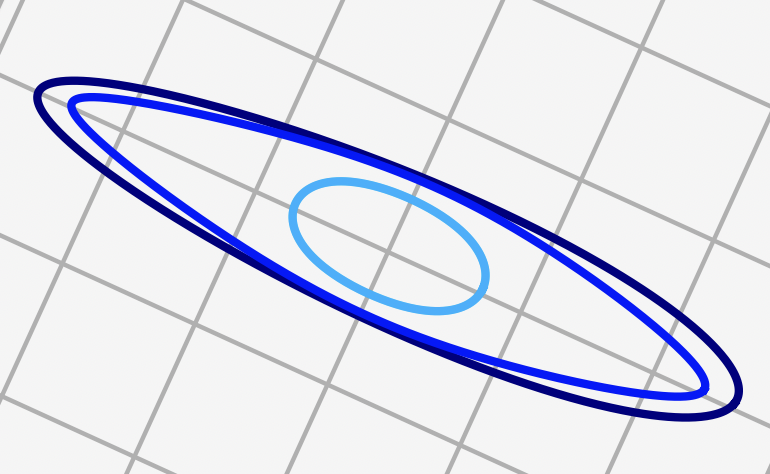}
\textbf{$xy$-projection}
\end{minipage}\hfill
\begin{minipage}{0.3\textwidth}
\centering
\includegraphics[width=\linewidth]{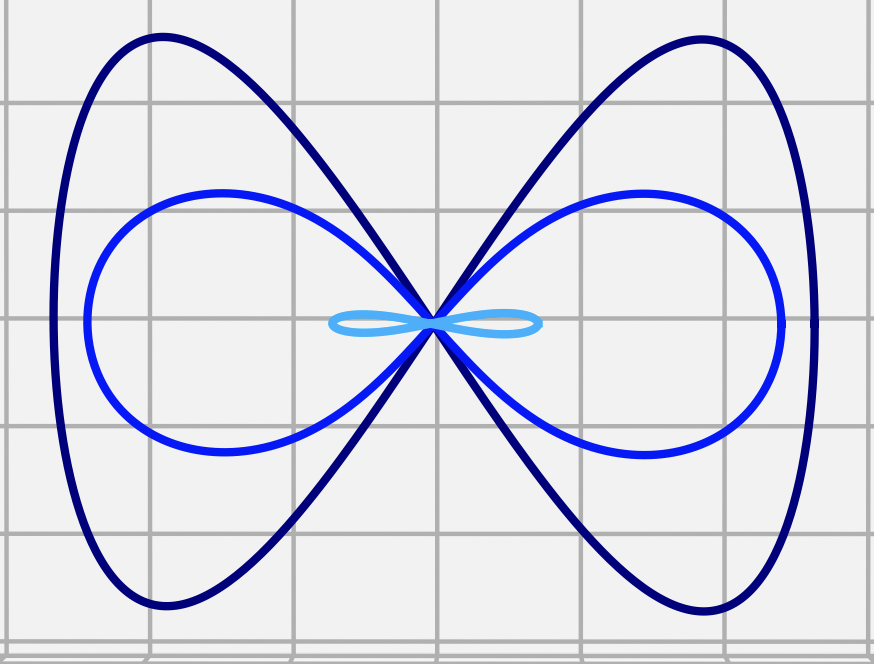}
\textbf{$xz$-projection}
\end{minipage}
\caption{Snapshots of the evolution of a perturbation of the planar figure eight curve from different angles}
\label{fig:three_images}
\end{figure}
It seems interesting to ask whether curves satisfying the conditions of 
Theorem \ref{main theorem} would become asymptotically circular. 
Numerically, such curves including $$(\cos u,\epsilon \sin u, \sin2u)$$ do become asymptotically circular (See Figure \ref{fig:three_images}). 
If a proof can be found that aligns with our numerical observations, such perturbations give examples of how type II singularities 
in Space Curve Shortening can be removed by perturbing the initial conditions.

While we do not know how to analyze all the curves satisfying the conditions of 
Theorem \ref{main theorem}, in a forthcoming paper, we will show for some subcases that curves do develop type I singularities and become asymptotically circular.

\subsection*{Applications}
By embedding $\mathbb R^n$ in $\mathbb R^{n+2}$, we are able to perturb immersed curves to have convex projections, and so that the perturbed curves will shrink to points according to Theorem \ref{main theorem}.
\begin{lem}
\label{perturb general immersed curves with additional codimension two}
For any closed immersed curve $\gamma_0:S^1\rightarrow\mathbb R^n$ with parameter $u$ ($|\gamma_{0u}|\neq0$), the perturbation $\gamma^{\epsilon}_0:S^1\rightarrow\mathbb R^2\times\mathbb R^n$
\begin{equation}
\label{wave approximation}
    \gamma^{\epsilon}_0:=(\epsilon\cos u, \epsilon\sin u,\gamma_0(u))
\end{equation}
has a one-to-one convex projection onto the $xy$-plane and hence shrinks to a point under space CSF for $\epsilon\neq0$.
\end{lem}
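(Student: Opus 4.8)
The plan is to verify directly that $\gamma_0^\epsilon$ fits the hypotheses of Theorem \ref{main theorem}, so that its evolution under space CSF shrinks to a point. There are three things to check: that $\gamma_0^\epsilon$ is an embedded smooth curve in $\mathbb{R}^{n+2}$, that $P_{xy}|_{\gamma_0^\epsilon}$ is injective, and that the projection curve $\overline{\gamma_0^\epsilon}$ is convex. The last two are immediate and essentially carry the whole statement: by construction $P_{xy}\circ\gamma_0^\epsilon(u) = (\epsilon\cos u, \epsilon\sin u)$, which is (for $\epsilon\neq 0$) a parametrization of the circle of radius $|\epsilon|$ traversed once, hence a one-to-one map from $S^1$ onto a uniformly convex planar curve. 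So $\overline{\gamma_0^\epsilon}$ is convex (indeed uniformly convex) and $P_{xy}|_{\gamma_0^\epsilon}$ is injective, with no vertical tangent lines since $(P_{xy}\circ\gamma_0^\epsilon)_u = (-\epsilon\sin u,\epsilon\cos u)\neq \vec 0$.

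It remains to check smoothness and embeddedness of $\gamma_0^\epsilon$ itself. Smoothness is clear since $\gamma_0$ is smooth and $\cos u,\sin u$ are smooth; immersedness holds because $|(\gamma_0^\epsilon)_u|^2 = \epsilon^2 + |\gamma_{0u}|^2 \geq \epsilon^2 > 0$ (here $|\gamma_{0u}|\neq 0$ is in fact not even needed for immersedness, only $\epsilon\neq 0$, though it is harmless to assume it). For embeddedness: if $\gamma_0^\epsilon(u_1) = \gamma_0^\epsilon(u_2)$ then in particular $(\epsilon\cos u_1,\epsilon\sin u_1) = (\epsilon\cos u_2,\epsilon\sin u_2)$, which for $\epsilon\neq 0$ forces $u_1 = u_2$ in $S^1 = \mathbb{R}/2\pi\mathbb{Z}$. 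Thus $\gamma_0^\epsilon$ is injective, and being an injective immersion of the compact manifold $S^1$ it is an embedding. Hence all hypotheses of Theorem \ref{main theorem} are satisfied (with $n$ there replaced by $n+2$), and part (b) of that theorem gives that $\gamma^\epsilon(\cdot,t)$ shrinks to a point as $t\to T$.

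I do not expect any genuine obstacle here — the construction \eqref{wave approximation} is designed precisely so that the projection is a round circle, which is the strongest possible form of the convexity hypothesis, and the extra two coordinates decouple cleanly from $\gamma_0$ in both the immersedness and injectivity computations. The only minor point worth stating explicitly is that Theorem \ref{main theorem} was phrased for curves in $\mathbb{R}^n$ with projection onto $\mathbb{R}^2\times\{\vec 0\}\subset\mathbb{R}^n$; applying it to $\gamma_0^\epsilon\in\mathbb{R}^2\times\mathbb{R}^n = \mathbb{R}^{n+2}$ is just this theorem with ambient dimension $n+2$, and the relevant $\mathbb{R}^2$ is the first two coordinate directions, exactly the $x,y$ used in \eqref{wave approximation}.
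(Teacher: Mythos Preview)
Your proof is correct. The paper does not spell out a proof of this lemma at all, treating it as an immediate consequence of Theorem~\ref{main theorem} once one observes that the projection $P_{xy}\circ\gamma_0^\epsilon(u)=(\epsilon\cos u,\epsilon\sin u)$ is a round circle; your argument simply makes explicit the routine verifications (embeddedness, injectivity of the projection, convexity) that the paper leaves to the reader.
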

Equation (\ref{wave approximation}), referred to as the wave approximation, was introduced in \cite[Definition 5.28]{hättenschweiler2015curve}.
\begin{figure}[ht]
    \centering
\includegraphics[width=0.8\linewidth]{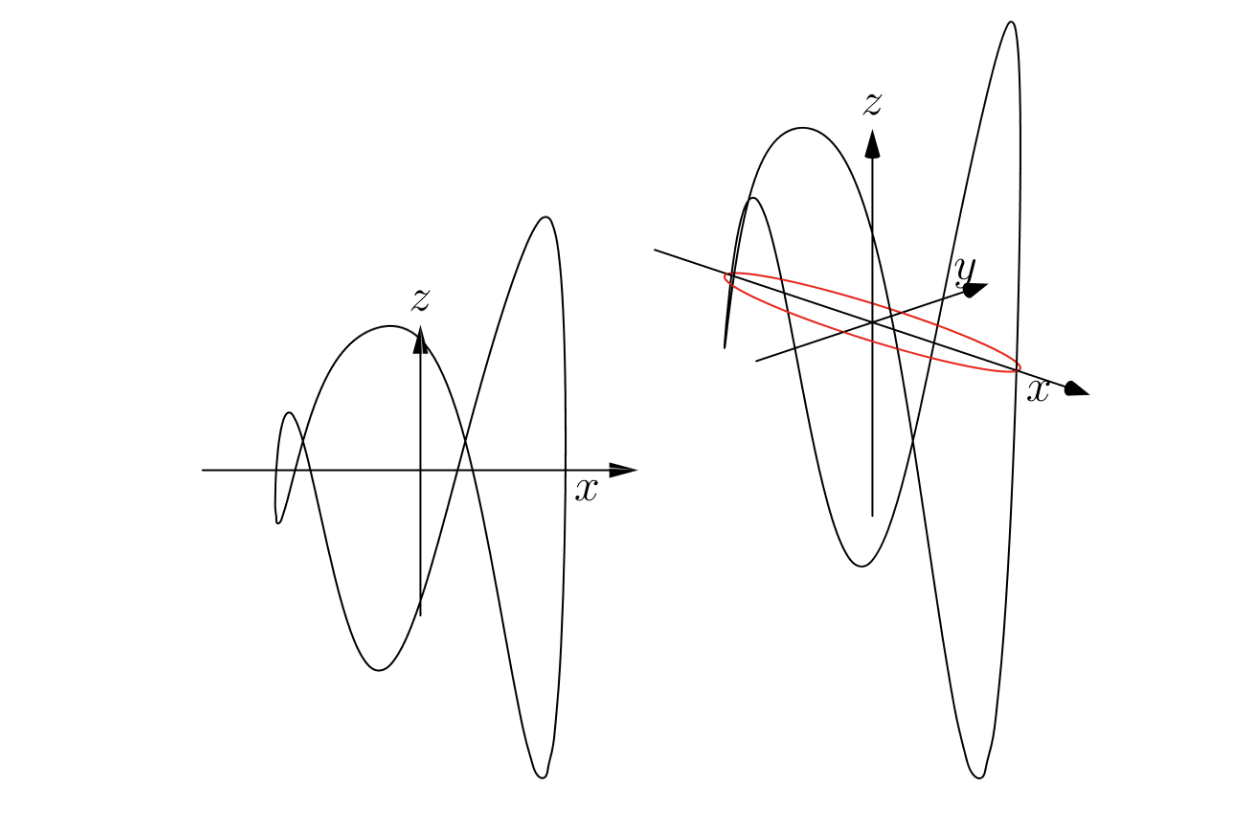}
\caption{An illustration of Lemma \ref{perturb curves with one direction exactly two critical points}.
On the left we have the unperturbed curve in $\mathbb R^2$, and on the right is the perturbation in $\mathbb{R}^3$. Its projection, drawn in red, is an ellipse, hence convex.  By Theorem \ref{main theorem},  the perturbed curve remains smooth until it shrinks to a point.}
\label{Fig:curve with one direction exactly two critical points}
\end{figure}

In the following special case, it suffices to perturb the given curve in $\mathbb R^{n+1}$ instead of $\mathbb{R}^{n+2}$.

\begin{lem}
\label{perturb curves with one direction exactly two critical points}
For any closed immersed curve $\gamma_0:S^1\rightarrow\mathbb R^n$ that admits a nonzero vector $\bm{v}\in\mathbb{R}^n$, such that $\bm{v}\cdot\gamma_0$ has exactly two critical points, there exists a perturbation $\gamma^{\epsilon}_0:S^1\rightarrow\mathbb R^1\times\mathbb R^n$ such that $\gamma^{\epsilon}_0$ has a one-to-one convex projection and hence shrinks to a point for $\epsilon\neq0$.
\end{lem}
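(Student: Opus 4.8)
The plan is to reduce to Lemma \ref{perturb general immersed curves with additional codimension two} by using the extra structure provided by the vector $\bm v$, replacing the two-dimensional circular perturbation with a one-dimensional one. After an orthogonal change of coordinates in $\mathbb R^n$ we may assume $\bm v = \bm e_1$, so that the first component $x_1(u)$ of $\gamma_0$ has exactly two critical points, say at $u=a$ and $u=b$; these are necessarily a nondegenerate maximum and a nondegenerate minimum (a smooth function on $S^1$ with exactly two critical points has one of each, and if either were degenerate a short perturbation/Morse argument—or simply counting sign changes of $x_1'$—shows there would have to be more). Consequently $u\mapsto (x_1(u),\text{(sign of }u\text{ relative to the arc from }a\text{ to }b))$ behaves like a ``height function with two turning points,'' which is exactly the combinatorial input needed to build a convex projection using only one new coordinate.

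Concretely, I would define $\gamma^\epsilon_0:S^1\to\mathbb R^1\times\mathbb R^n$ by
\begin{equation}
\label{one-codim-perturbation}
\gamma^\epsilon_0(u) := \big(\epsilon\,\phi(u),\ \gamma_0(u)\big),
\end{equation}
where the new coordinate is paired with the old coordinate $x_1 = \bm v\cdot\gamma_0$ to form the projection onto the plane spanned by the new axis and the $\bm v$-axis, and $\phi:S^1\to\mathbb R$ is chosen so that the planar curve $u\mapsto(\epsilon\phi(u),\,x_1(u))$ is convex and one-to-one. The natural choice is to let $\phi$ be a smooth function that, like $x_1$, has exactly one max and one min, located so that $\phi$ is monotone increasing on the arc from $a$ to $b$ (where $x_1$ goes from min to max, after relabeling) and monotone decreasing on the complementary arc; for instance one can take $\phi$ to be a reparametrized ``angle'' coordinate, or even build $\phi$ directly from $x_1$ by integrating $|x_1'|$ against a suitable weight. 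Then on each of the two arcs the map $u\mapsto(\epsilon\phi(u),x_1(u))$ is a graph over the $x_1$-axis (since $x_1$ is monotone there), the two graphs share only their two endpoints, and by arranging the two graph functions to be strictly concave/convex with matching one-sided derivatives at the joints one gets a convex, one-to-one closed curve. Injectivity of $P_{xy}|_{\gamma^\epsilon_0}$ and convexity of the projection then let Theorem \ref{main theorem} apply, giving the shrink-to-a-point conclusion for all $\epsilon\neq 0$.

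The main obstacle is the same smoothing issue that the wave approximation finesses in Lemma \ref{perturb general immersed curves with additional codimension two}: one must produce $\phi$ so that the resulting projection curve is genuinely $C^\infty$ and convex, not merely piecewise smooth and convex, at the two junction points $u=a,b$ where $x_1'=0$. Near such a point $x_1(u) = x_1(a) + \tfrac12 x_1''(a)(u-a)^2 + O((u-a)^3)$ with $x_1''(a)\neq 0$, so as a graph over $x_1$ the curve has a square-root-type parametrization; I would handle this by choosing $\phi$ with the matching vanishing behavior — $\phi'(a)=0$ with $\phi''(a)$ of the appropriate sign — so that $(\epsilon\phi,x_1)$ remains a smooth immersed planar curve with everywhere positive curvature for small $\epsilon$, or alternatively by invoking a one-dimensional version of the construction already carried out for \eqref{wave approximation}. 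A clean way to sidestep it entirely: observe that $(\epsilon\cos u,\epsilon\sin u)$ in \eqref{wave approximation} is itself, after projecting to the line spanned by $(\cos\theta_0,\sin\theta_0)$ for a fixed generic $\theta_0$, a height function with exactly two critical points; so one may simply take $\phi(u)$ to be a smooth bijective reparametrization of $u$ on $S^1$ (a rotation) composed with $\cos$, i.e.\ $\phi(u)=\cos(u - u_0)$ with $u_0$ chosen using the two critical points of $\bm v\cdot\gamma_0$, and then the convexity of the projection follows because it is an affine image of part of the already-convex wave-approximation picture together with the monotonicity of $\bm v\cdot\gamma_0$ on the two complementary arcs. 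Either route reduces the lemma to the already-established Lemma \ref{perturb general immersed curves with additional codimension two} plus elementary planar convexity bookkeeping.
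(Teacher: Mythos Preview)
Your approach is essentially the paper's: embed $\gamma_0$ in $\mathbb R^1\times\mathbb R^n$, take the projection plane to be spanned by the new coordinate and $\bm v$, and choose the new coordinate $\epsilon\phi(u)$ so that $u\mapsto(\epsilon\phi(u),\bm v\cdot\gamma_0(u))$ is one-to-one and convex. The paper's proof is a two-line version of exactly this: it observes that before perturbation the projection $P_{xy}(\gamma_0)$ is a doubly covered line segment (since $x\equiv 0$ and $y=\bm v\cdot\gamma_0$ has exactly two critical points), and then simply asserts one can perturb in the $x$-direction to make the projection convex.

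Two remarks on your write-up. First, your claim that the two critical points of $\bm v\cdot\gamma_0$ are \emph{necessarily nondegenerate} is false in general (a smooth $S^1$-function can have exactly one max and one min, either of which may be degenerate); fortunately nothing in the construction needs it. Second, most of your effort goes into making the projection curve $C^\infty$ with positive curvature at the turning points, but this is unnecessary: Theorem~\ref{main theorem} only requires $\bar\gamma_0$ to be convex in the sense of Definition~\ref{definitions of three notions of convexity}(a)---a simple continuous curve bounding a convex set---and explicitly allows the initial projection to be non-immersed (even a polygon). So any smooth $\phi$ that is strictly monotone on each of the two arcs between the critical points, with the resulting two graphs over the $\bm v$-axis lying on opposite sides and bounding a convex region, already suffices; the detour through Lemma~\ref{perturb general immersed curves with additional codimension two} and the matching of higher-order Taylor coefficients can be dropped.
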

\begin{proof}[Proof of Lemma \ref{perturb curves with one direction exactly two critical points}]
We may label the coordinates of $\mathbb R^1\times\mathbb R^n$ to be $(x,y,z_1,\cdots,z_{n-1})$ and may assume $(0,1,0,\cdots,0)\cdot\gamma_0$ has exactly two critical points.  So $P_{xy}(\gamma_0)$ is a double line segment. So we may perturb the space curve $\gamma_0$ such that $P_{xy}(\gamma_0^\epsilon)$ is a convex curve.
\end{proof}

Particularly, perturbations described above also work for some planar figure eights with unequal area, which become singular without shrinking to points under CSF by \cite[Lemma 3]{grayson1989shape}.

\subsection*{Outline of the paper}
$\S2$ and $\S3$  contain the proof of Theorem \ref{main theorem}$(a)$ for all $n\geq2$, using the Sturmian theorem \cite{angenent1988zero} and the strict maximum principle, where Lemma \ref{no tangent lines perpendicular to the horizontal plane} is pivotal.
    
   $\S4$ restricts to the $n=3$ case, introduces the concept of the three-point condition and contains related lemmas.
    The three-point condition quantitatively measures how far space curves with convex projections deviate from planar convex curves. 

    $\S5$ contains the proof of Theorem \ref{main theorem}$(b)$ that $\gamma$ shrinks to a point, assuming that the projection curve $\gb$ shrinks to a point (Proposition \ref{long time behaviour under three point condition}). Our argument is based on the fact that the three-point condition is preserved. We first deal with the $n=3$ case and then the $n>3$ case by  projecting the curves onto $\mathbb R^3$ and studying the corresponding flow (Lemma \ref{evolution of the projection curve in the 3 dim space}),
    using techniques identical to those in the $n=3$ case. 

    $\S6$ summarizes some facts about the Space Curve Shortening flow.
    
    $\S7$ presents the proof of Proposition \ref{long time behaviour under three point condition}. In other words, we show that the projection curve $\gb$ shrinks to a point. This is achieved through constructing barriers and a priori estimates of parabolic equations. 
    
\subsection*{Acknowledgement:} The author is deeply grateful to Sigurd Angenent for his invaluable guidance and constant support. The author would also like to thank Hung Vinh Tran for several helpful discussions and for his encouragement.

\section{Convexity of the projection}
In this section we use the Sturmian theorem\cite{angenent1988zero} to show that $\gb\att$ is convex if the initial projection curve $\gb(\cdot,0)$ is convex.

\subsection*{Curve Shortening Calculus}
Let $\gamma:\mathbb R\times[0,T)\rightarrow\mathbb R^n$ be a family of space curves. We always assume $\gamma$ is continuous, and that the partial derivatives $\gamma_t,\gamma_u,\gamma_{uu}$ exist and are continuous for $t>0$. Moreover we assume that each curve $\gamma\att$ $(t>0)$ is an immersion, that is to say,
$$\gamma_u(u,t)\neq0\text{ for all }u\in\mathbb R, t>0.$$
Throughout the paper we assume that $\gamma\att$  is a closed curve, in other words,
$$\gamma(u+2\pi,t)=\gamma(u,t)\text{ for all }u, t.$$
We define the arc-length derivative to be 
\begin{equation}
\label{definition of partial s}
\frac{\partial}{\partial s}:=\frac{1}{|\gamma_u|}\frac{\partial}{\partial u}.
\end{equation}
The unit tangent vector and the curvature vector are
$$T=\frac{\partial \gamma}{\partial s} \text{ and } \Vec{k}=\frac{\partial^2 \gamma}{\partial s^2}.$$
The normal velocity vector of the curve is the component of $\gamma_t$ that is perpendicular to $T$:
$$\gamma_t^\perp=\gamma_t-(\gamma_t\cdot T)T$$

Let us consider CSF:
$$\gamma_t=\gamma_{ss}$$
\begin{lem}[Lemma 1.4 of \cite{AltschulerGrayson}]
\label{commute of operators}
The operators $\frac{\partial}{\partial t}$ and $\frac{\partial}{\partial s}$ commute according to the following rule:
\begin{equation}
    \frac{\partial}{\partial t}\frac{\partial}{\partial s}= \frac{\partial}{\partial s}\frac{\partial}{\partial t}+k^2\frac{\partial}{\partial s}
\end{equation}    
\end{lem}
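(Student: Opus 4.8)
The plan is to reduce the whole statement to the single geometric identity $\partial_t|\gamma_u| = -k^2|\gamma_u|$, which records the rate at which curve shortening contracts the local arc length; everything else is bookkeeping with the definition $\partial_s = |\gamma_u|^{-1}\partial_u$ and the fact that $\partial_u$ and $\partial_t$ commute as partial derivatives in the independent variables $u,t$.

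First I would apply both sides of the claimed identity to an arbitrary $C^1$ function $f(u,t)$, scalar- or vector-valued. Expanding the left-hand side,
\[
\partial_t\partial_s f = \partial_t\!\left(\frac{1}{|\gamma_u|}\,\partial_u f\right) = \frac{1}{|\gamma_u|}\,\partial_u\partial_t f + \left(\partial_t\frac{1}{|\gamma_u|}\right)\partial_u f = \partial_s\partial_t f - \frac{\partial_t|\gamma_u|}{|\gamma_u|}\,\partial_s f ,
\]
using $\partial_u\partial_t f = \partial_t\partial_u f$. So it remains exactly to show $\partial_t|\gamma_u|/|\gamma_u| = -k^2$.

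Next I would compute $\partial_t|\gamma_u|$ directly. From $|\gamma_u| = (\gamma_u\cdot\gamma_u)^{1/2}$ and $\partial_t\gamma_u = \partial_u\gamma_t$ we get $\partial_t|\gamma_u| = |\gamma_u|^{-1}\,\gamma_u\cdot\partial_u\gamma_t$. Substituting the flow equation $\gamma_t=\gamma_{ss}$ and rewriting $\partial_u = |\gamma_u|\,\partial_s$ turns this into $|\gamma_u|\,T\cdot\partial_s\gamma_{ss}$. Finally, differentiating $T\cdot\gamma_{ss}=0$ (valid since $|T|\equiv 1$ forces $\gamma_s\cdot\gamma_{ss}=0$) in $s$ and using $T_s=\gamma_{ss}$ gives $T\cdot\partial_s\gamma_{ss} = -|\gamma_{ss}|^2 = -k^2$. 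Plugging $\partial_t|\gamma_u| = -k^2|\gamma_u|$ into the display above yields $\partial_t\partial_s f = \partial_s\partial_t f + k^2\partial_s f$ for every admissible $f$, i.e. the operator identity.

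There is no real obstacle here: the argument is a short computation whose only delicate point is the interchange of $\partial_u$ and $\partial_t$, which is justified by the standing assumption that $\gamma$, $\gamma_u$, $\gamma_{uu}$ and $\gamma_t$ are continuous for $t>0$. I would record the lemma mainly because the correction term $k^2\partial_s$ reappears whenever a time derivative is pushed past an arc-length derivative in the later sections.
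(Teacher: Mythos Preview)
Your argument is correct and is the standard derivation of this commutator identity: reduce to showing $\partial_t|\gamma_u|=-k^2|\gamma_u|$, then obtain that from $\gamma_t=\gamma_{ss}$ and $T\cdot\gamma_{ss}=0$. The paper does not supply its own proof of this lemma---it simply cites Lemma~1.4 of Altschuler--Grayson---so there is nothing to compare against; your write-up would serve perfectly well as the missing proof.
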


\subsection*{Sign-changing number}
Following \cite{angenent1988zero}, we adopt the following definition of the 
sign-changing number:
\begin{defi}[Sign-changing number]
    For a function $h: S^1\rightarrow \mathbb R$, let the sign-changing number of the function $h$ be the supremum over all $k$ such that there are consecutive points $u_1,\cdots,u_k\in S^1$, satisfying
            $$h(u_i)h(u_{i+1})<0, i=1,\cdots, k-1$$ 
    and 
            $$h(u_k)h(u_1)<0.$$
\end{defi}

One can derive from  the Sturmian theorem\cite{angenent1988zero} (see also \cite{angenent1991nodal}, \cite[Proposition 1.2]{Angenent1991surfaces}) that:
\begin{lem}
\label{simple zeros}
    For a fixed nonzero vector $\bm{v}\in \mathbb{R}^n$,
    
    $(a)$ The sign-changing number of $\bm{v} \cdot \gamma_s$ is a non-increasing function of time $t$. 
     
    $(b)$ The number of zeros of $\bm{v} \cdot \gamma_s$ is finite at all positive times $t$.
    
    $(c)$ There exists $t_0\geq0$, such that $\forall t\in\left(t_0,T\right)$,
    all zeros of $\bm{v} \cdot \gamma_s(\cdot,t)$ are simple. Moreover, if $\bm{v} \cdot \gamma_s(\cdot,0)$ changes its sign only twice, 
    then $t_0$ can be chosen to be $0$ and $\forall t\in\left(0,T\right)$, $\bm{v} \cdot \gamma_s(\cdot,t)$ has exactly two zeros, which are simple.
\end{lem}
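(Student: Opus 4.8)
The plan is to apply Angenent's Sturmian theorem \cite{angenent1988zero} to the single scalar function $h:=\bm v\cdot\gamma_s$.

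\emph{Step 1 (evolution equation for $h$).} Using $\gamma_t=\gamma_{ss}$ and the commutation rule of Lemma \ref{commute of operators}, one first computes
\[
\partial_t\gamma_s=\partial_t\partial_s\gamma=\partial_s\partial_t\gamma+k^2\partial_s\gamma=\gamma_{sss}+k^2\gamma_s ,
\]
so that, $\bm v$ being constant,
\[
h_t=\bm v\cdot\bigl(\gamma_{sss}+k^2\gamma_s\bigr)=h_{ss}+k^2h .
\]
Writing $\partial_s=|\gamma_u|^{-1}\partial_u$, this reads $h_t=|\gamma_u|^{-2}h_{uu}+b\,h_u+k^2h$ on $S^1\times(0,T)$: a linear equation whose coefficients are smooth for $t>0$ by parabolic regularity of the flow, and which is uniformly parabolic on every strip $S^1\times[\varepsilon,T']$ with $0<\varepsilon<T'<T$. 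If $h\equiv0$ (that is, $\gamma$ lies in an affine hyperplane orthogonal to $\bm v$) the three claims are vacuous; otherwise $h$ is not identically zero at any time (backward uniqueness for this equation) and, being real-analytic in $u$ for $t>0$, has only finitely many zeros there.

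\emph{Step 2 (apply the Sturmian theorem).} Angenent's theorem for linear parabolic equations on the circle, applied to $h$, gives immediately: the zero set of $h(\cdot,t)$ is finite for every $t>0$, which is part $(b)$; the sign-changing number $z(t)$ of $h(\cdot,t)$ is non-increasing, which is part $(a)$; and, decisively for part $(c)$, if $h(\cdot,t_1)$ has a multiple zero for some $t_1>0$ then $z$ drops strictly across $t_1$.

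\emph{Step 3 (part $(c)$).} The function $z$ is integer-valued, and in fact even: $\int_{S^1}h\,ds=\bm v\cdot\bigl(\gamma(2\pi,t)-\gamma(0,t)\bigr)=0$, so $h(\cdot,t)$ changes sign an even number of times. Being non-increasing, $z$ is eventually constant, say $z\equiv m$ on $(t_0,T)$ for some $t_0\ge0$. If $h(\cdot,t_1)$ had a multiple zero at some $t_1\in(t_0,T)$, Step 2 would make $z$ strictly smaller just after $t_1$ than just before it, contradicting constancy on $(t_0,T)$; hence every zero of $h(\cdot,t)$ is simple for $t\in(t_0,T)$. For the last assertion, assume $z(0)=2$; then $0\le z(t)\le2$ for all $t$, while $z(t)=0$ is impossible (it would force $h(\cdot,t)$ to have a fixed sign, contradicting $\int_{S^1}h\,ds=0$ together with $h\not\equiv0$), so $z\equiv2$ and one may take $t_0=0$. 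Running the argument above on $(0,T)$ shows the two zeros are simple, and a simple zero is an honest sign change, so $h(\cdot,t)$ has exactly two zeros for every $t\in(0,T)$.

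\emph{Main obstacle.} There is no deep difficulty, but two things need care. The first is Step 1: one must be sure $h$ solves a genuinely linear, uniformly parabolic equation with coefficients regular enough for \cite{angenent1988zero} to apply, which is precisely where Lemma \ref{commute of operators} and the smoothing of CSF for $t>0$ enter. The second is the bookkeeping in Step 3 — turning ``$z$ is eventually constant'' into ``all zeros are eventually simple'' genuinely uses the strict-drop-at-a-degenerate-zero half of the Sturmian theorem, and pinning the value down to $2$ in the final statement relies on the parity of $z$ on the circle.
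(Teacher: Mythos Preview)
Your proof is correct and follows essentially the same route as the paper: derive the linear parabolic equation $(\bm v\cdot\gamma_s)_t=(\bm v\cdot\gamma_s)_{ss}+k^2(\bm v\cdot\gamma_s)$ via Lemma~\ref{commute of operators} and invoke Angenent's Sturmian theorem for (a), (b) and the strict-drop-at-multiple-zeros part of (c). The only cosmetic difference is in the ``moreover'' clause, where the paper observes that the maximum and minimum of $\bm v\cdot\gamma$ force at least two sign-changing zeros of $\bm v\cdot\gamma_s$, while you reach the same lower bound via $\int_{S^1}\bm v\cdot\gamma_s\,ds=0$; note that the evenness of the sign-changing number comes from the circle topology rather than from the integral, but your conclusion is unaffected.
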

\begin{proof}
    Let us consider the height function $h:=\bm{v} \cdot \gamma$, by Lemma \ref{commute of operators},
    we have $h_{st}=h_{ts}+k^2h_s=h_{sss}+k^2h_s$.
    Statements $(a)$ $(b)$ follows from \cite{angenent1988zero}.
    
    Considering $(a)$ $(b)$, there exists $t_0$ such that $\forall t\in\left(t_0,T\right)$,
    the number of zeros of $\bm{v} \cdot \gamma_s(\cdot,t)$ is constant.
    So the zeros are simple by \cite{angenent1988zero}.
    
    When $\bm{v} \cdot \gamma_s(\cdot,0)$ changes sign twice, $\bm{v} \cdot \gamma_s(\cdot,t)$ changes sign at most twice due to $(a)$.
    Since $\gamma$ is an immersed closed curve, $\bm{v} \cdot \gamma_s(\cdot,t)$ 
    has at least two sign-changing zeros corresponding to the maximum and minimum points of $\bm{v} \cdot \gamma(\cdot,t)$.
    So $(c)$ follows.\\
\end{proof}

\subsection*{A sufficient condition for planar convexity}
Following \cite[Section 3]{milnor1950total}, we use the following notion.
\begin{defi}
    For a continuous closed curve $\gamma\subset\mathbb{R}^n$ and a unit length vector $\bm{v}\in\mathbb R^n$, let $\mu(\gamma,\bm{v})$ be the number of local maximum points of the function $\bm{v}\cdot\gamma: S^1\rightarrow \mathbb R$.
\end{defi}

The following lemma follows from combining \cite[Theorem 3.1]{milnor1950total} and \cite[Theorem 3.4]{milnor1950total}.
\begin{lem}
\label{a lemma from Milnor}
For a continuous planar curve $\gamma\subset\mathbb{R}^2$, if $\mu(\gamma,\bm{v})=1$ for all unit length vectors $\bm{v}\in\mathbb R^2$, then $\gamma$ is a convex curve.\\
\end{lem}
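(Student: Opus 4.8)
The plan is to derive the lemma by combining the Crofton-type integral formula for total curvature with the rigidity (equality) case of Fenchel's theorem, both in the forms recorded by Milnor. Write $\tau(\gamma)$ for the total curvature of the continuous closed curve $\gamma$, defined as the supremum of the total turning angle over all polygons inscribed in $\gamma$, and let $\overline{\mu}(\gamma)$ denote the average of $\mu(\gamma,\bm v)$ as $\bm v$ ranges over the unit circle of $\mathbb R^2$.

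First I would invoke \cite[Theorem 3.1]{milnor1950total} with $n=2$, which expresses $\tau(\gamma)$ as a fixed universal multiple of $\overline{\mu}(\gamma)$. I need not track this multiple by hand: evaluating the identity on the round unit circle --- for which $\tau=2\pi$ while every linear functional restricted to it has a single local maximum, so $\overline{\mu}=1$ --- pins the identity down to $\tau(\gamma)=2\pi\,\overline{\mu}(\gamma)$. Under the hypothesis $\mu(\gamma,\bm v)=1$ for every unit $\bm v$ we have $\overline{\mu}(\gamma)=1$, hence $\tau(\gamma)=2\pi$.

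Next I would apply \cite[Theorem 3.4]{milnor1950total}, the continuous form of Fenchel's theorem together with its equality case: every closed curve satisfies $\tau\ge 2\pi$, with equality exactly when the curve is a convex plane curve in the sense of Definition \ref{definitions of three notions of convexity}(a). Combined with the previous step, this shows $\gamma$ is convex, which is what is claimed.

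The only real content here --- Fenchel's inequality and, more delicately, its rigidity --- sits entirely inside Milnor's two theorems, so I expect the main obstacle to be conceptual rather than computational: one must trust (or reprove) that equality in Fenchel forces a simple curve bounding a convex region, not merely some weaker degeneration. On our side only two bookkeeping points remain. The normalization of the Crofton formula is handled by the calibration on the round circle above. And one should rule out the obvious degenerate competitor, a doubled line segment, which also has $\tau=2\pi$ but is not simple: if $\bm v$ is perpendicular to the segment then $\bm v\cdot\gamma$ is constant, so every point is a local maximum and $\mu(\gamma,\bm v)\ne 1$; thus such curves (and, more generally, any curve for which some $\bm v\cdot\gamma$ is constant on an arc on which it attains its maximum) are already excluded by the hypothesis before Fenchel rigidity is invoked. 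With these points dispatched, Milnor's equality case delivers exactly Definition \ref{definitions of three notions of convexity}(a).
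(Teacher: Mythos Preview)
Your proposal is correct and follows exactly the route the paper takes: the paper's entire proof is the sentence ``follows from combining \cite[Theorem 3.1]{milnor1950total} and \cite[Theorem 3.4]{milnor1950total},'' and you have simply unpacked this citation, adding the calibration of the constant and the remark on the doubled segment as helpful sanity checks.
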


\subsection*{Projection convexity and no vertical tangent lines}
The following lemma is crucial in the proof of Theorem \ref{main theorem}$(a)$, in particular because it implies that $\fapt$, the projection curve $\gb\att$ is immersed, 
which makes it possible to consider the evolution equations of $\gb$.
\begin{lem}
\label{no tangent lines perpendicular to the horizontal plane}
    Under the conditions of Theorem \ref{main theorem}, $\forall t\in \left(0,T\right)$, 
     $P_{xy}|_{\gamma\att}$ is injective, $\Bar{\gamma}(\cdot,t)$ is convex,
     and $\gamma(\cdot,t)$ has no vertical tangent lines. 
\end{lem}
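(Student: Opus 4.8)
The plan is to deduce all three conclusions from a single fact about the flow: \emph{for every horizontal unit vector $\bm{v}\in\mathbb R^2\times\{\vec 0\}$, writing $\bm{v}=(\bm{w},\vec 0)$ with $\bm{w}\in\mathbb R^2$, the function $\bm{v}\cdot\gamma_s$ has exactly two zeros at each time $t\in(0,T)$, and they are simple.} To obtain this I would first verify the hypothesis of the ``moreover'' clause of Lemma \ref{simple zeros}$(c)$ at $t=0$. Since $\bm{v}$ is horizontal, $\bm{v}\cdot\gamma(\cdot,0)=\bm{w}\cdot\gb_0$ on $S^1$, hence $\bm{v}\cdot\gamma_s(\cdot,0)=|\gamma_u(\cdot,0)|^{-1}\,\partial_u(\bm{w}\cdot\gb_0)$ has the same zeros and the same sign changes as $\partial_u(\bm{w}\cdot\gb_0)$. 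Because $\gb_0$ is convex, the height function $u\mapsto\bm{w}\cdot\gb_0(u)$ has a single arc of maxima and a single arc of minima and is monotone in between --- a standard property of convex curves, following from the fact that each line $\{\bm{w}\cdot x=c\}$ meets the convex region bounded by $\gb_0$ in a segment --- and it is nonconstant, as $\gb_0$ does not lie on a line. Thus $\partial_u(\bm{w}\cdot\gb_0)$, and so $\bm{v}\cdot\gamma_s(\cdot,0)$, changes sign exactly twice, and Lemma \ref{simple zeros}$(c)$ gives the italicized fact for all $t\in(0,T)$. This argument is uniform in $n\ge2$ and uses only the always-defined quantity $\bm{v}\cdot\gamma_s(\cdot,0)$ of the smooth space curve, so it is insensitive to whether $\gb_0$ is immersed --- $\gb_0$ may be a polygon.

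Second, I would rule out vertical tangent lines for $t\in(0,T)$. If $\gamma(u_0,t)$ had a vertical tangent, then $\gamma_s(u_0,t)\perp\mathbb R^2\times\{\vec 0\}$, so $\bm{v}\cdot\gamma_s(u_0,t)=0$ for \emph{every} horizontal $\bm{v}$, equivalently $\gb_u(u_0,t)=P_{xy}\bigl(\gamma_u(u_0,t)\bigr)=0$. Differentiating at such a point, where $\bm{w}\cdot\gb_u(u_0,t)=0$, gives $\partial_u(\bm{v}\cdot\gamma_s)(u_0,t)=|\gamma_u(u_0,t)|^{-1}\,\bm{w}\cdot\gb_{uu}(u_0,t)$. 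Choosing a unit $\bm{w}_0$ orthogonal to $\gb_{uu}(u_0,t)$ (any unit $\bm{w}_0$ if that vector vanishes) and setting $\bm{v}_0=(\bm{w}_0,\vec 0)$, the point $u_0$ is then a non-simple zero of $\bm{v}_0\cdot\gamma_s$, contradicting the italicized fact. Hence $\gamma\att$ has no vertical tangent lines for $t\in(0,T)$, and in particular $\gb\att=P_{xy}\circ\gamma\att$ is an immersion for such $t$.

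Third, convexity and injectivity of the projection. Fix $t\in(0,T)$ and a unit $\bm{w}\in\mathbb R^2$, and put $\bm{v}=(\bm{w},\vec 0)$. Then $\partial_u(\bm{w}\cdot\gb\att)=|\gamma_u|\,(\bm{v}\cdot\gamma_s)$ has exactly two zeros, each simple, so the second $u$-derivative of $\bm{w}\cdot\gb\att$ is nonzero there; thus $\bm{w}\cdot\gb\att:S^1\to\mathbb R$ is a Morse function with a single local maximum, i.e.\ $\mu(\gb\att,\bm{w})=1$. As this holds for every unit $\bm{w}\in\mathbb R^2$ and $\gb\att$ is a continuous closed planar curve, Lemma \ref{a lemma from Milnor} shows $\gb\att$ is a convex curve; in particular it is simple, so $P_{xy}|_{\gamma\att}$ is injective. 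This proves the three assertions.

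The step I expect to be the main obstacle is the bookkeeping at $t=0$ in the first paragraph. Since the hypotheses allow $\gamma_0$ to have vertical tangent lines and $\gb_0$ to be merely convex --- possibly a non-immersed curve such as a polygon --- one must phrase the count of sign changes in terms of $\bm{v}\cdot\gamma_s(\cdot,0)$ and level sets of the linear functional $\bm{w}\cdot x$, never in terms of $\kb$ or the tangent of $\gb_0$, which need not exist. Everything afterwards is a direct application of Lemma \ref{simple zeros} and Lemma \ref{a lemma from Milnor}; notably, the strict maximum principle and the barrier constructions mentioned in the introduction are not needed for this lemma --- they enter later only to upgrade ``convex'' to ``uniformly convex'' in Theorem \ref{main theorem}$(a)$.
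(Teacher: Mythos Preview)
Your proof is correct and shares the paper's overall strategy --- invoke Lemma~\ref{simple zeros}$(c)$ uniformly over horizontal directions, then apply Lemma~\ref{a lemma from Milnor} --- but your treatment of the no-vertical-tangent step is genuinely different. The paper argues topologically: if $u_0$ were a vertical tangent, then for every unit horizontal $\bm{v}$ the point $u_0$ is one of the two zeros of $\bm{v}\cdot\gamma_s$, hence the unique global maximum or minimum of $\bm{v}\cdot\gamma$; the sets $A_M=\{\bm{v}:u_0\text{ is the max}\}$ and $A_m=\{\bm{v}:u_0\text{ is the min}\}$ are then closed, nonempty, disjoint, and cover the circle of horizontal directions, contradicting connectedness of $S^1$. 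You instead exploit the \emph{simplicity} clause of Lemma~\ref{simple zeros}$(c)$ directly: at a vertical tangent $\gb_u(u_0,t)=0$, so $\partial_u(\bm{v}\cdot\gamma_s)(u_0,t)=|\gamma_u|^{-1}\,\bm{w}\cdot\gb_{uu}(u_0,t)$, and choosing $\bm{w}_0\perp\gb_{uu}(u_0,t)$ manufactures a non-simple zero of $\bm{v}_0\cdot\gamma_s$. Your route is shorter and purely local, trading the connectedness argument for a one-line derivative computation; the paper's route uses only the global max/min characterization and never differentiates at $u_0$. Your careful handling of the $t=0$ bookkeeping --- working with sign changes of $\bm{v}\cdot\gamma_s(\cdot,0)$ rather than with $\kb$ or the tangent of $\gb_0$, which need not exist --- matches the paper's level of generality.
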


\begin{proof}
    Let $\bm{v}$ be a nonzero horizontal vector, we have $\gb\cdot \bm{v}=\gamma\cdot \bm{v}$.
    
    Considering that the initial projection curve $\Bar{\gamma}(\cdot,0)$ is convex, the function $\bm{v}\cdot\gamma_s(\cdot,0)$ only changes sign twice. By Lemma \ref{simple zeros}$(c)$, $\forall t\in(0,T)$, $\bm{v} \cdot \gamma_s(\cdot,t)$ 
    has exactly two zeros, which are simple. So these two zeros are the global maximum and minimum point
    of $\bm{v} \cdot \gamma(\cdot,t)$.

    For the projection curve $\gb\att\subset\mathbb R^2$, it follows that $\mu(\gb\att,\bm{v})=1$ for $t\in(0,T)$ and all unit length vectors $\bm{v}\in\mathbb R^2$.
    Lemma \ref{a lemma from Milnor} implies that the continuous closed curve $\gb\att\subset\mathbb R^2$ is convex. Therefore $\gb\att$ is a simple curve and $P_{xy}|_\gamma$ is injective.

We now show that for $t\in\left(0,T\right)$, $\gamma(\cdot,t)$ has no vertical tangent lines. Assume this is not true. Then there exists $u$, such that $\bm{v} \cdot \gamma_s(u,t)=0$, for all unit horizontal vectors $\bm{v}$.
        So for each such $\bm{v}$, $u$ is either the global maximum or minimum point of $\bm{v} \cdot \gamma(\cdot,t)$.
        Let us now define:
    $$A_M:=\{\bm{v}\in S^1\subset\mathbb R^2|(\bm{v} \cdot \gamma)(u,t)\geq(\bm{v} \cdot \gamma)(u^\prime,t),\forall u^\prime\}$$
    $$A_m:=\{\bm{v}\in S^1\subset\mathbb R^2|(\bm{v} \cdot \gamma)(u,t)\leq(\bm{v} \cdot \gamma)(u^\prime,t),\forall u^\prime\}$$
where $S^1$ refers to the set of all unit horizontal vectors.

    We have:
    \begin{itemize}
        \item $A_M\cup A_m=S^1$, since $u$ is either max or min point.
        \item $A_M\neq\emptyset, A_m\neq\emptyset$, since $\bm{v}\in A_M\Leftrightarrow -\bm{v}\in A_m$.
        \item $A_M, A_m$ are closed sets in $S^1$, by definitions.
        \item $A_M\cap A_m=\emptyset$, for if $\bm{v}\in A_M\cap A_m$, then $\bm{v} \cdot \gamma$ would be a constant,
        and thus $\bm{v} \cdot \gamma_s(\cdot,t)\equiv0$. But $\bm{v} \cdot \gamma_s(\cdot,t)$ should only have two zeros.  
    \end{itemize}
    Those four items contradict the connectedness of $S^1$, so this claim is proved.
\end{proof}

\subsection*{A solution that develops a vertical tangent}
We now present the explicit example mentioned in Lemma \ref{example that no vertical tangent lines is not preserved}.

Consider a family of curves 
$$
\gamma^{\epsilon}_0(u):=((\cos u+1-\epsilon) \cos u,(\cos u+1-\epsilon) \sin u, \sin u),
$$
where $u\in[0,2\pi], \epsilon\in[0,1)$. 
\begin{figure}[hb]
\centering
\includegraphics[width=1\linewidth]{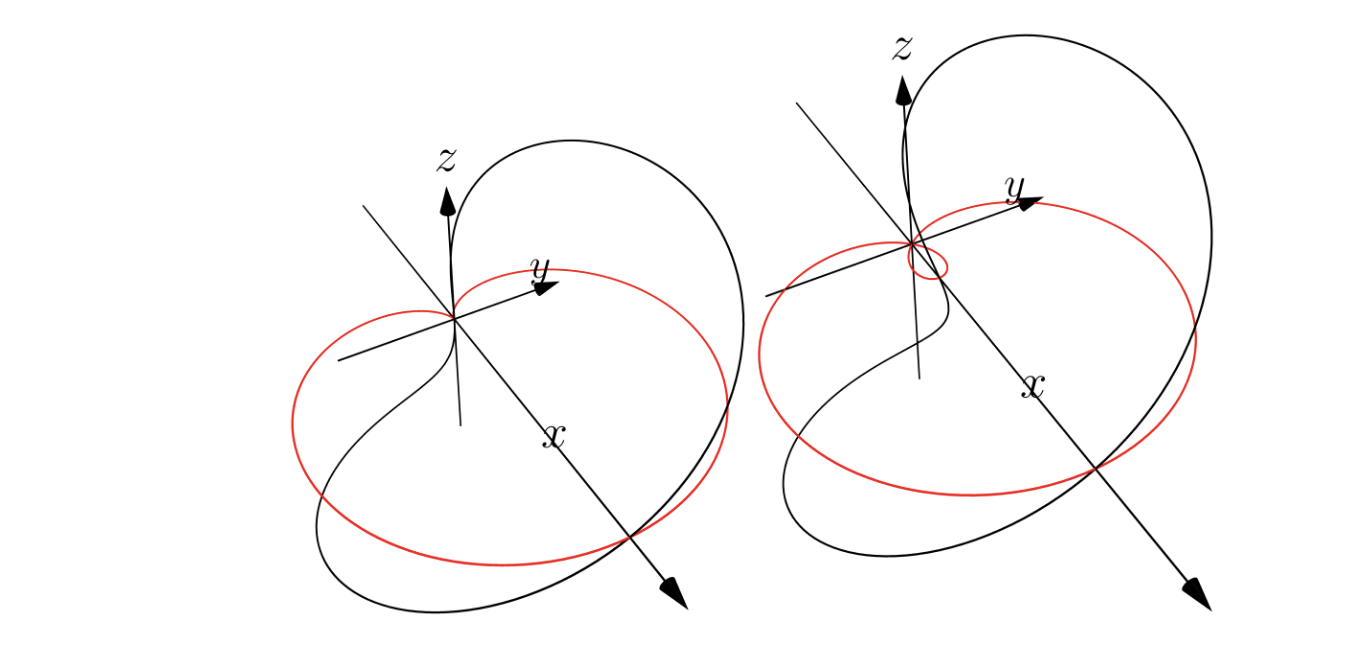}
\caption{The left figure shows the curve $\gamma^{\epsilon}_0$ when $\epsilon=0$ and the right figure depicts the curve when $\epsilon$ is small and positive. The red curves are the corresponding projection curves in the $xy$-plane.}
\end{figure}

\begin{lem}
One can check directly that:

$(a)$ The space curve $\gamma_0^\epsilon$ is smoothly immersed for all $\epsilon$.

$(b)$ The space curve $\gamma_0^\epsilon$ has a vertical tangent when $\epsilon=0$.

$(c)$ The projection curve $P_{x y}\left(\gamma_{0}^{\epsilon}\right)$ is the standard cardioid when $\epsilon=0$.
\end{lem}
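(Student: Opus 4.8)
The plan is to verify all three claims by direct differentiation, since each curve is given by an explicit trigonometric formula. First I would set $r(u):=\cos u+1-\epsilon$, so that $\gamma_0^\epsilon(u)=(r\cos u,\,r\sin u,\,\sin u)$, and compute, using $r'=-\sin u$,
$$\gamma_{0u}^\epsilon(u)=\bigl(-\sin u\,(2\cos u+1-\epsilon),\ 2\cos^2 u-1+(1-\epsilon)\cos u,\ \cos u\bigr).$$
Since every component is a trigonometric polynomial, $\gamma_0^\epsilon$ is automatically $C^\infty$; what needs checking for part $(a)$ is only that this velocity vector never vanishes.

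For part $(a)$ I would argue pointwise. The third component of $\gamma_{0u}^\epsilon$ is $\cos u$, so the velocity is nonzero wherever $\cos u\neq0$; at the two points with $\cos u=0$ the second component equals $2\cdot0-1+0=-1$, hence is nonzero there as well. This proves $\gamma_0^\epsilon$ is a smooth immersion for every $\epsilon\in[0,1)$ (indeed for every $\epsilon$), establishing $(a)$.

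For part $(b)$ I would set $\epsilon=0$ and note that a tangent line is vertical precisely when the first two components of $\gamma_{0u}^0$ vanish simultaneously. The second component factors as $2\cos^2 u-1+\cos u=(2\cos u-1)(\cos u+1)$, which vanishes iff $\cos u=\tfrac12$ or $\cos u=-1$; taking $u=\pi$ (so $\cos u=-1$, $\sin u=0$) then forces the first component to vanish as well, giving $\gamma_{0u}^0(\pi)=(0,0,-1)$. Hence $\gamma_0^0$ has a vertical tangent at $u=\pi$, which is exactly what is claimed. For part $(c)$ I would simply observe that with $\epsilon=0$ the projection $P_{xy}(\gamma_0^0)(u)=\bigl((1+\cos u)\cos u,\ (1+\cos u)\sin u\bigr)$ is, in polar coordinates $(r,\theta)=(1+\cos\theta,\ u)$, the standard cardioid $r=1+\cos\theta$.

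There is no substantial obstacle here: the statement is a routine verification. The only place calling for a moment's attention is the immersion claim at the points where $\cos u=0$, since exactly there the $z$-component of the velocity — the "obvious" witness to nondegeneracy — vanishes, so one must instead invoke the $y$-component; and in part $(b)$ one should keep in mind that the other root $\cos u=\tfrac12$ does \emph{not} yield a vertical tangent (there $\sin u\neq0$ and $2\cos u+1=2\neq0$), so it is relevant that a single vertical tangent is all that is required.
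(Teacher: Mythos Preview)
Your proof is correct and follows essentially the same direct-computation approach as the paper. The only minor difference is in part $(a)$: rather than your case split on whether $\cos u=0$, the paper observes (using the polar speed formula for the first two components) that $|\gamma_{0u}^{\epsilon}|^2=(\cos u+1-\epsilon)^2+1\geq 1$, which is a bit cleaner and in fact gives the exact speed; your argument is equally valid.
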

\begin{proof}
By direct computation, 
\begin{equation}
\label{derivatives of the explicit example about emerge of the vertical tangent line}   
\gamma_{0u}^{\epsilon}(u)=\left((\cos u+1-\epsilon)(-\sin u)-\sin u \cos u,(\cos u+1-\epsilon) \cos u-\sin ^2 u, \cos u\right).
\end{equation}
And
$$|\gamma_{0u}^{\epsilon}|^2
=(\cos u+1-\epsilon)^2+1\geq 1.$$
Thus part $(a)$ holds.

For part $(b)$, the tangent vector
$\gamma_{0u}^{0}(\pi)=(0,0,-1)$ is vertical.
\end{proof}

Let $\gamma^{\epsilon}:S^1\times \left[0,T^{\epsilon}\right)\rightarrow\mathbb{R}^3$ be the solution of space CSF with $\gamma^{\epsilon}(u,0)=\gamma^{\epsilon}_0(u)$.
\begin{lem}
\label{detailed analysis that example that no vertical tangent lines is not preserved}
There exists a constant $\epsilon_0>0$ such that for $\epsilon\in(0,\epsilon_0)$,

$(a)$ The space curve $\gamma_0^\epsilon$ has no vertical tangent lines.

$(b)$ There exists a time $t_\epsilon\in(0,T^\epsilon)$ such that the space curve $\gamma^\epsilon(\cdot,t_\epsilon)$ does have a vertical tangent line.
    
As a result, we can take $\gamma_0$ in Lemma \ref{example that no vertical tangent lines is not preserved} to be $\gamma_0^{\epsilon}$, for $\epsilon\in(0,\epsilon_0)$.
\end{lem}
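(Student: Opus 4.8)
The plan is to treat part $(a)$ by a direct computation, and part $(b)$ by reducing the existence of a vertical tangent to a sign change of a single scalar function of $t$, exploiting the reflection symmetry of $\gamma^{\epsilon}_0$. For $(a)$: by \eqref{derivatives of the explicit example about emerge of the vertical tangent line}, the tangent line at $u$ is vertical exactly when the first two entries of $\gamma^{\epsilon}_{0u}(u)$ vanish. The first entry factors as $-\sin u\,(2\cos u+1-\epsilon)$, so either $u\in\{0,\pi\}$ or $\cos u=(\epsilon-1)/2$; in the second case the second entry equals $-1$, and at $u=0,\pi$ it equals $2-\epsilon$ and $\epsilon$ respectively. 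Hence no vertical tangent occurs for $0<\epsilon<2$, so any $\epsilon_0\le 1$ works in $(a)$, and the residual zero at $(u,\epsilon)=(\pi,0)$ is precisely the vertical tangent of the preceding lemma.

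For $(b)$, first I would reduce to one quantity. The curve $\gamma^{\epsilon}_0$ is invariant under $R\colon(x,y,z)\mapsto(x,-y,-z)$ combined with $u\mapsto -u$, and since CSF commutes with isometries and reparametrizations, $R\,\gamma^{\epsilon}(u,t)=\gamma^{\epsilon}(-u,t)$ for all $t$. Writing $\gamma^{\epsilon}=(x^{\epsilon},y^{\epsilon},z^{\epsilon})$, this forces $x^{\epsilon}_u(\pi,t)=0$ and $z^{\epsilon}(\pi,t)=0$ for all $t$. Since $z^{\epsilon}(\cdot,0)=\sin u$ changes sign only twice, Lemma~\ref{simple zeros}$(c)$ gives that $(0,0,1)\cdot\gamma^{\epsilon}_s(\cdot,t)$ has exactly two simple zeros for every $t\in(0,T^{\epsilon})$, and a parity argument rules out these two critical points of $z^{\epsilon}(\cdot,t)$ sitting at the symmetric points $u=0,\pi$ (otherwise the $R$-symmetry, combined with $z^{\epsilon}(0,t)=z^{\epsilon}(\pi,t)=0$, would force $z^{\epsilon}(\cdot,t)\equiv 0$); hence $z^{\epsilon}_u(\pi,t)\ne0$ for all $t\in(0,T^{\epsilon})$. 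Consequently $\gamma^{\epsilon}(\cdot,t)$ has a vertical tangent at $u=\pi$ if and only if $y^{\epsilon}_u(\pi,t)=0$, so it suffices to make $y^{\epsilon}_u(\pi,\cdot)$ change sign on $(0,T^{\epsilon})$.

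By \eqref{derivatives of the explicit example about emerge of the vertical tangent line}, $y^{\epsilon}_u(\pi,0)=\epsilon>0$. To produce a negative value I would compare with the limit curve $\gamma^{0}$. Near $u=\pi$ one has $y^{0}(\pi+v,0)=-\tfrac12 v^{3}+O(v^{5})$ and $|\gamma^{0}_u|=1+O(v^{4})$, so $y^{0}_s(\pi,0)=0$ while $y^{0}_{sss}(\pi,0)=-3$; the commutator Lemma~\ref{commute of operators} gives $\partial_t(y_s)=y_{sss}+k^2 y_s$, hence
\[
\frac{d}{dt}\Big|_{t=0}\, y^{0}_s(\pi,t)=y^{0}_{sss}(\pi,0)+k(\pi,0)^2\,y^{0}_s(\pi,0)=-3<0 .
\]
Thus $y^{0}_u(\pi,t)<0$ for all small $t>0$; fix such a $t_1\in(0,T^{0})$. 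By continuous dependence of the CSF solution on the initial data, for $\epsilon$ small enough $T^{\epsilon}>t_1$ and $y^{\epsilon}_u(\pi,t_1)<0$, while $y^{\epsilon}_u(\pi,0)=\epsilon>0$; the intermediate value theorem gives $t_{\epsilon}\in(0,t_1)$ with $y^{\epsilon}_u(\pi,t_{\epsilon})=0$, so $\gamma^{\epsilon}(\cdot,t_{\epsilon})$ has a vertical tangent at $u=\pi$. Finally $P_{xy}(\gamma^{\epsilon}_0)$ is the limaçon $r=(1-\epsilon)+\cos\theta$ with $0<1-\epsilon<1$, which has $\bar k>0$ everywhere yet self-intersects, giving the ``locally convex but not one-to-one'' addendum.

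The main obstacle — really the only place where anything must be checked carefully — is the sign in the displayed identity: the whole argument hinges on $\tfrac{d}{dt}\big|_{0}y^{0}_s(\pi,t)<0$, which is what drives $y^{\epsilon}_u(\pi,t)$ across zero. Establishing it requires the cubic vanishing $y^{0}(\pi+v,0)=-\tfrac12 v^{3}+O(v^{5})$ at the cusp (so that $y^{0}_{sss}(\pi,0)=-3$) together with the commutator identity. The secondary technical point is the \emph{uniform in $t$} nonvanishing of $z^{\epsilon}_u(\pi,t)$, which I would not derive from continuity but from the Sturmian bound of Lemma~\ref{simple zeros} plus the parity argument above; everything else is routine.
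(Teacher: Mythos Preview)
Your proof is correct, and the overall architecture matches the paper's: both exploit the $\mathbb{Z}_2$ symmetry $R:(x,y,z)\mapsto(x,-y,-z)$ to pin the potential vertical tangent at $u=\pi$, reduce to showing that $y^{\epsilon}_u(\pi,\cdot)$ changes sign, and invoke continuous dependence on initial data to pass from $\epsilon=0$ to small $\epsilon>0$. The genuine difference lies in how the sign $y^0_u(\pi,t)<0$ is obtained for the limit curve. The paper argues qualitatively via Sturmian trapping: the planes $y=0$ and $y=2z$ each meet $\gamma^0(\cdot,t)$ in exactly two points (both on the $x$-axis) for all $t$, so points off the $x$-axis remain in their initial open region; since the tangent at $u=\pi$ lies in the $yz$-plane by symmetry, it must keep pointing from the $(y>0,z>0)$ region into the $(y<0,z<0)$ region, giving $y^0_u(\pi,t)<0$ and $z^0_u(\pi,t)<0$ for \emph{all} $t>0$. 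You instead compute the cubic vanishing $y^0(\pi+v,0)=-\tfrac12 v^3+O(v^5)$ at the cusp and feed it into the commutator identity to get $\partial_t y^0_s(\pi,0)=-3$, which only yields $y^0_u(\pi,t)<0$ for \emph{small} $t>0$; but that is all that is needed once a single $t_1$ is fixed.

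Your route is more computational and explicit; the paper's is more robust in that it delivers the sign on the entire time interval without any Taylor expansion. One minor remark: your Sturmian argument for $z^{\epsilon}_u(\pi,t)\neq0$ is correct but unnecessary. You only need the implication ``$y^{\epsilon}_u(\pi,t_\epsilon)=0\Rightarrow$ vertical tangent at $u=\pi$,'' and that follows immediately from $x^{\epsilon}_u(\pi,t)=0$ (symmetry) together with the immersion condition $|\gamma^{\epsilon}_u|>0$, which forces $z^{\epsilon}_u(\pi,t_\epsilon)\neq0$ at the relevant moment.
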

\begin{proof}
By equation (\ref{derivatives of the explicit example about emerge of the vertical tangent line}),
$$
\left|P_{x y}\left(\gamma_{0u}^{\epsilon}\right)\right|^2=(\cos u+1-\epsilon)^2+\sin^2u.
$$

So for $\epsilon>0$, $\left|P_{x y}\left(\gamma_{0u}^{\epsilon}\right)\right|^2>0$ for all $u$ and for $\epsilon=0$, $\left|P_{x y}\left(\gamma_{0u}^{\epsilon}\right)\right|^2=0$ iff $u=\pi$.

When $\epsilon$ is small, $\gamma_0^{\epsilon}$ can be viewed as a small $C^1$ perturbation of $\gamma^0_0$ since
$|\gamma^{\epsilon}_0(u)-\gamma^0_0(u)|=\epsilon$ and $|\gamma^{\epsilon}_{0u}(u)-\gamma^0_{0u}(u)|=\epsilon$.

Let us first analyze $\gamma^0$, the evolution of curve $\gamma^0_0$. 

The plane $y=0$ intersects the curve $\gamma^0_0$ at exactly two points, which remain on the $x$-axis under CSF because $\gamma^0$ is $\mathbb Z_2$ rotationally symmetric with respect to the $x$-axis, which implies that the curvature vectors at these two points are parallel to the $x$-axis.

By the Sturmian theorem \cite{angenent1988zero}, except for these two points on the $x$-axis, no other points on $\gamma^0\att$ can touch the planes $y=0$. Thus the plane $y=0$ intersects $\gamma^0\att$ at exactly two points for all $t\in[0,T)$. The same holds for the plane $y=2z$. The plane $y=0$ and $y=2z$ divide $\mathbb R^3$ into four open regions. A point on $\gamma_0^0$ in one region remains in the same region since it cannot touch the plane $y=0$ or $y=2z$. 

By rotational symmetry about the $x$-axis, for $t>0$, $\gamma_{u}^0(\pi,t)$ is parallel to the $yz$-plane.

By \cite{angenent1988zero}, for 
$t>0$, $\gamma_{u}^0(\pi,t)$ is not parallel to the plane $y=0$. Actually   $\gamma_{u}^0(\pi,t)\cdot(0,1,0)<0$ and $\gamma_{u}^0(\pi,t)\cdot(0,0,1)<0$ because a point on $\gamma_0^0$ but not on the $x$-axis remains in the same region.

It holds that $\gamma_{0u}^{\epsilon}(\pi)=\left(0,\epsilon, -1\right)$ and when $\epsilon$ small, $\gamma_0^{\epsilon}$ can be viewed as a small $C^1$ perturbation of $\gamma^0_0$. Fix an $\epsilon$ small, there exists a time $t_\epsilon$ such that 
$\gamma_{u}^{\epsilon}(\pi,t_\epsilon)$ is parallel to $z$-axis because $\gamma_{u}^{\epsilon}(\pi,t)$ remains parallel to the $yz$-plane (because of the symmetry) and sweeps from having a positive $y$ coordinate (because $\gamma_{0u}^{\epsilon}(\pi)=\left(0,\epsilon, -1\right)$) to a negative $y$ coordinate because of the smooth dependence on the initial condition as well as $\gamma_{u}^0(\pi,t)\cdot(0,1,0)<0$. This is the vertical tangent line that we are looking for.

By direct computation, curvature of the projection curve $P_{xy}(\gamma^{\epsilon}_0)$
has no zeros for $\epsilon>0$, which means the projection curve $P_{xy}(\gamma^{\epsilon}_0)$ is locally convex.\\
\end{proof}

\section{Uniform convexity of the projection}
Next, we proceed to prove that the projection curve $\Bar{\gamma}(\cdot,t)$ is uniformly convex for all $t\in \left(0,T\right)$ using the strict maximum principle.

By definition of the arc-length parameter $\Bar{s}$ of $\gb$,
\begin{equation}
\label{definition of partial s bar}
    \frac{\partial}{\partial \s}:=\frac{1}{|\gb_u|}\frac{\partial}{\partial u}.
\end{equation}

The Frenet-Serret formulas in this case are:
\begin{equation}
\label{Frenet-Serret formulas of the projection curve}
    \tb_\s=\kb\nb, \hspace{1cm} \nb_\s=-\kb\tb
\end{equation}

\begin{defi}
Let us call 
\begin{equation}
c=c(s,t):=x_s^2+y_s^2    
\end{equation} the coefficient of the evolution of the projection curve $\gb$.
\end{defi}

The arc-length parameter $s$ of the space curve $\gamma$ parametrizes the projection curve $\gb=(x,y)$. Thus $|\gb_s|=\sqrt{x_s^2+y_s^2}$ and
\begin{equation}
\label{relation between partial s and partial s bar}
\partial_s
=\sqrt{x_s^2+y_s^2}\partial_{\bar{s}}
=\sqrt{c} \partial_{\bar{s}}.
\end{equation}

By direct computations:
\begin{lem}
\label{projection of the normal vector}
For a space curve $\gamma\subset\mathbb{R}^n$ with no vertical tangent lines, one has that  
\begin{equation}
\label{x_ss in terms of sbar}
    P_{xy}(\gamma_{ss})=c\gb_{\bar{s}\bar{s}}+\frac{1}{2}c_{\bar{s}}\gb_{\bar{s}}.
\end{equation}
Thus
$P_{xy}(\gamma_{ss})\cdot\Bar{\gamma}_{\Bar{s}\Bar{s}}=(x_s^2+y_s^2)\Bar{k}^2$.
\end{lem}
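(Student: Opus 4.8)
The plan is to reduce everything to the chain rule (\ref{relation between partial s and partial s bar}) relating $\partial_s$ and $\partial_{\bar s}$, together with the Frenet--Serret formulas (\ref{Frenet-Serret formulas of the projection curve}). The key initial observation is that $P_{xy}$ is linear, hence commutes with $\partial_u$ and therefore with $\partial_s = |\gamma_u|^{-1}\partial_u$; consequently $P_{xy}(\gamma_{ss}) = \partial_s\partial_s(P_{xy}\gamma) = \partial_s\partial_s\gb$, where it is important to note that $\partial_s$ here still denotes differentiation with respect to the arc-length parameter of the space curve $\gamma$, not that of $\gb$.

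Next I would invoke (\ref{relation between partial s and partial s bar}), namely $\partial_s = \sqrt{c}\,\partial_{\bar s}$, which is legitimate precisely because $\gamma$ has no vertical tangent lines: then $c = x_s^2+y_s^2>0$, the projection $\gb$ is immersed, and $\partial_{\bar s}$ is defined. Applying it twice to $\gb$ and using $\partial_{\bar s}\sqrt{c} = c_{\bar s}/(2\sqrt{c})$,
$$P_{xy}(\gamma_{ss}) = \sqrt{c}\,\partial_{\bar s}\bigl(\sqrt{c}\,\gb_{\bar s}\bigr) = \sqrt{c}\Bigl(\tfrac{c_{\bar s}}{2\sqrt{c}}\,\gb_{\bar s} + \sqrt{c}\,\gb_{\bar s\bar s}\Bigr) = c\,\gb_{\bar s\bar s} + \tfrac12 c_{\bar s}\,\gb_{\bar s},$$
which is the first claimed identity.

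For the second identity I would use that $\gb_{\bar s} = \tb$ is the unit tangent of $\gb$ and, by (\ref{Frenet-Serret formulas of the projection curve}), $\gb_{\bar s\bar s} = \tb_{\bar s} = \kb\nb$, so that $\gb_{\bar s}\cdot\gb_{\bar s\bar s} = \kb\,(\tb\cdot\nb) = 0$ and $|\gb_{\bar s\bar s}|^2 = \kb^2$. Dotting the first identity with $\gb_{\bar s\bar s}$ annihilates the $\gb_{\bar s}$ term and leaves $P_{xy}(\gamma_{ss})\cdot\bar\gamma_{\bar s\bar s} = c\,\kb^2 = (x_s^2+y_s^2)\kb^2$.

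I do not expect a genuine obstacle here: the statement is a direct computation. The only point requiring care is the bookkeeping about which arc-length parameter each $\partial_s$ refers to when one passes from $\gamma$ to $\gb$, and the standing hypothesis of no vertical tangent lines is used exactly once, to ensure $c>0$ so that $\partial_{\bar s}$ and the Frenet frame $(\tb,\nb)$ of $\gb$ make sense.
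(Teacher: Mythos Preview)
Your proposal is correct and follows essentially the same approach as the paper: both use linearity of $P_{xy}$ to commute it with $\partial_s$, then invoke $\partial_s=\sqrt{c}\,\partial_{\bar s}$ and expand the product rule to obtain the first identity; the second identity is the immediate consequence of dotting with $\gb_{\bar s\bar s}=\kb\nb$. Your write-up is in fact slightly more explicit than the paper's about why $P_{xy}$ commutes with $\partial_s$ and about the derivation of the second identity, but the argument is the same.
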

\begin{proof}
One has that
\begin{align}
P_{xy}(\gamma_{ss})
&=P_{xy}\left[\sqrt{x_s^2+y_s^2} \left(\sqrt{x_s^2+y_s^2} \gamma_{\bar{s}}\right)_{\bar{s}}\right]\\
&=\sqrt{x_s^2+y_s^2} \left(\sqrt{x_s^2+y_s^2} \gb_{\bar{s}}\right)_{\bar{s}}
=c\gb_{\bar{s}\bar{s}}+\frac{1}{2}c_{\bar{s}}\gb_{\bar{s}}.
\end{align}    
\end{proof}

\subsection*{Evolution of the projection curves}
\begin{lem}[Evolution equation of $\bar{\gamma}$]
\label{evolution equation of the projection curve in the plane}
For $t>0$,
    \begin{equation}
    \label{evolution equation of the projection curve}
       \Bar{\gamma}_t^\perp=(x_s^2+y_s^2)\Bar{k}\Bar{N}=c\Bar{k}\Bar{N}.
    \end{equation}  
\end{lem}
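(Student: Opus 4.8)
The plan is to obtain $\gb_t$ directly from the space CSF equation and then split it into its tangential and normal parts with respect to the Frenet frame of $\gb$. Since $P_{xy}$ is a fixed linear map it commutes with $\partial_t$ at fixed parameter $u$, so the flow equation $\gamma_t=\gamma_{ss}$ gives $\gb_t=P_{xy}(\gamma_t)=P_{xy}(\gamma_{ss})$. Before manipulating the right-hand side I would record that, by Lemma \ref{no tangent lines perpendicular to the horizontal plane}, for $t>0$ the curve $\gamma\att$ has no vertical tangent lines; hence $\gb\att$ is immersed, $c=x_s^2+y_s^2>0$, and the arc-length parameter $\s$ together with the unit frame $(\tb,\nb)$ of $\gb$ are well defined. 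This is exactly what makes Lemma \ref{projection of the normal vector} applicable and is the reason the statement is restricted to $t>0$.

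Next I would apply Lemma \ref{projection of the normal vector}, which yields $P_{xy}(\gamma_{ss})=c\gb_{\s\s}+\tfrac12 c_{\s}\gb_{\s}$. Using $\gb_\s=\tb$ and the Frenet–Serret formula (\ref{Frenet-Serret formulas of the projection curve}), namely $\gb_{\s\s}=\tb_\s=\kb\nb$, this becomes
\[
\gb_t=c\kb\nb+\tfrac12 c_\s\tb .
\]
Finally I would extract the normal velocity. Since $\tb\cdot\nb=0$ and $|\tb|=1$, the tangential part of $\gb_t$ is $(\gb_t\cdot\tb)\tb=\tfrac12 c_\s\tb$, so
\[
\gb_t^\perp=\gb_t-(\gb_t\cdot\tb)\tb=c\kb\nb=(x_s^2+y_s^2)\kb\nb ,
\]
which is the assertion. (The discarded tangential term $\tfrac12 c_\s\tb$ merely reflects that the parametrization of $\gb$ induced by $\gamma$ is not arc-length; it does not affect the geometric motion.)

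I do not expect a genuine obstacle here: modulo the regularity input, the proof is the two-line computation above, combining $\gamma_t=\gamma_{ss}$, Lemma \ref{projection of the normal vector}, and the Frenet–Serret relations. The only point that must be handled with care is ensuring $\gb\att$ is an honest immersed curve with a well-defined curvature $\kb$, i.e. that $c>0$; this is precisely guaranteed by the pivotal Lemma \ref{no tangent lines perpendicular to the horizontal plane}, so the lemma should be invoked explicitly at the start of the argument.
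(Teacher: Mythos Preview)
Your proposal is correct and follows essentially the same route as the paper: compute $\gb_t=P_{xy}(\gamma_t)=P_{xy}(\gamma_{ss})$ and invoke Lemma \ref{projection of the normal vector} to write this as $c\gb_{\s\s}+\tfrac12 c_{\s}\gb_{\s}$, whose normal part is $c\kb\nb$. The paper's proof is just this one-line computation; your version is the same argument with the regularity justification (via Lemma \ref{no tangent lines perpendicular to the horizontal plane}) and the extraction of the normal component spelled out more explicitly.
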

\begin{proof}
By Lemma \ref{projection of the normal vector},
\begin{align}
\gb_t=P_{xy}(\gamma)_t=P_{xy}(\gamma_t)=P_{xy}(\gamma_{ss})
=c\gb_{\bar{s}\bar{s}}+\frac{1}{2}c_{\bar{s}}\gb_{\bar{s}}.
\end{align}
\end{proof}

Up to tangential motion, we may consider the flow
    \begin{equation}
    \label{evolution of the projection curve up to tangential motion}
       \Bar{\gamma}_{t^{\prime}}=(x_s^2+y_s^2)\Bar{k}\Bar{N}=c\Bar{k}\Bar{N},
    \end{equation}
where $\frac{\partial}{\partial t^\prime}$ is chosen such that $\partial_{t^\prime}\gb\perp\partial_{\bar{s}}\gb$. To be more concrete,
\begin{equation}
\label{definition of partial t prime}
    \frac{\partial}{\partial t^\prime}=\frac{\partial}{\partial t}-\frac{c_s}{2c}\frac{\partial}{\partial s}.
\end{equation}

Before proceeding, we explain the notation
$\frac{\partial}{\partial t},\frac{\partial}{\partial t^\prime},\frac{\partial}{\partial s},\frac{\partial}{\partial \bar{s}}$. All functions we consider are defined on some subset of the $(u,t)$-plane. We think of $\frac{\partial}{\partial t},\frac{\partial}{\partial t^\prime},\frac{\partial}{\partial s},\frac{\partial}{\partial \bar{s}}$ as vectorfields on the $(u,t)$-plane, where $\frac{\partial}{\partial t}$ is defined naturally and $\frac{\partial}{\partial t^\prime},\frac{\partial}{\partial s},\frac{\partial}{\partial \bar{s}}$ are defined by equations (\ref{definition of partial t prime}), (\ref{definition of partial s}), (\ref{definition of partial s bar}).

\begin{lem}
We can reparameterize $\gb\att$:
$$\gb(v,t)=\gb(u(v,t),t)$$
such that for $t\in(0,T)$, $|\gb_v|\neq0$ and
\begin{equation}
    \frac{\partial}{\partial t^\prime}\frac{\partial}{\partial v}
    =\frac{\partial}{\partial v}\frac{\partial}{\partial t^\prime}.
\end{equation}
Moreover, for $t\in(0,T)$, it holds that,
\begin{equation}
\label{time derivative of square of gamma bar u}
\left(|\gb_v|^2\right)_{t^\prime}=-2c\Bar{k}^2|\gb_v|^2.
\end{equation} 
\end{lem}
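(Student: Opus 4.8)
The plan is to produce the reparametrization by straightening out the vector field $\frac{\partial}{\partial t^\prime}$ on the $(u,t)$-plane, after which the commutation $\frac{\partial}{\partial t^\prime}\frac{\partial}{\partial v}=\frac{\partial}{\partial v}\frac{\partial}{\partial t^\prime}$ becomes automatic, and then to read off (\ref{time derivative of square of gamma bar u}) from a short computation using (\ref{evolution of the projection curve up to tangential motion}) and the Frenet formulas (\ref{Frenet-Serret formulas of the projection curve}).

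First I would rewrite $\frac{\partial}{\partial t^\prime}$ using (\ref{definition of partial t prime}) and (\ref{definition of partial s}) as the vector field $\frac{\partial}{\partial t}+a\,\frac{\partial}{\partial u}$ on the $(u,t)$-plane, where $a:=-\frac{c_s}{2c|\gamma_u|}$. By Lemma \ref{no tangent lines perpendicular to the horizontal plane}, for $t\in(0,T)$ the curve $\gamma\att$ has no vertical tangent lines, so $c=x_s^2+y_s^2>0$ and $|\gamma_u|\neq0$; hence $a$ is smooth on $S^1\times(0,T)$. Next I would fix $t_\ast\in(0,T)$ and solve the characteristic ODE $\frac{du}{dt}=a(u,t)$ on the circle: since $S^1$ is compact and $a$ is smooth on $S^1\times(0,T)$, the induced time-$t_\ast$-to-$t$ flow is a diffeomorphism $\Phi_t$ of $S^1$ for every $t\in(0,T)$. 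Setting $u=u(v,t):=\Phi_t(v)$ defines $v$ as the label, read off at time $t_\ast$, of the characteristic through $(u,t)$; equivalently $v$ solves the transport equation $\frac{\partial v}{\partial t^\prime}=0$ with $v|_{t=t_\ast}=u$, and $v_u\neq0$. A one-line chain rule (using $\frac{\partial v}{\partial t^\prime}=0$) then shows that in the coordinates $(v,t)$ the operator $\frac{\partial}{\partial t^\prime}$ is just $\frac{\partial}{\partial t}$ at fixed $v$, so it commutes with $\frac{\partial}{\partial v}$; moreover $|\gb_v|=|\gb_u|\,|u_v|=\sqrt c\,|u_v|\neq0$ for $t\in(0,T)$ since $\Phi_t$ is a diffeomorphism, while $v\mapsto\gb(v,t)$ remains a closed curve.

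For the evolution equation I would differentiate $|\gb_v|^2=\gb_v\cdot\gb_v$, use the commutation just established together with (\ref{evolution of the projection curve up to tangential motion}) to write $\frac{\partial}{\partial t^\prime}\gb_v=\frac{\partial}{\partial v}\gb_{t^\prime}=\frac{\partial}{\partial v}(c\kb\nb)$, then expand $\frac{\partial}{\partial v}=|\gb_v|\frac{\partial}{\partial \s}$ and apply $\nb_{\s}=-\kb\tb$ from (\ref{Frenet-Serret formulas of the projection curve}): since $\gb_v$ is a scalar multiple of $\tb$ and $\tb\cdot\nb=0$, only the term proportional to $\tb$ contributes to the inner product with $\gb_v$, and it yields $(|\gb_v|^2)_{t^\prime}=-2c\kb^2|\gb_v|^2$. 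This is just the classical fact that a curve moving purely in the normal direction with normal speed $F=c\kb$ has length element evolving by $\frac{\partial}{\partial t^\prime}\log|\gb_v|=-F\kb$; it is the $\gb$-analogue of Lemma \ref{commute of operators} and of the first-variation-of-length formula.

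The one genuinely delicate point is the construction in the second paragraph: one must verify that $\frac{\partial v}{\partial t^\prime}=0$ is globally solvable on $S^1\times(0,T)$ and that the resulting change of parameter is an honest smooth $2\pi$-periodic reparametrization (equivalently, that the characteristic flow is complete in the $t$-direction and stays a diffeomorphism of $S^1$), so that $\frac{\partial}{\partial t^\prime}$ and $\frac{\partial}{\partial v}$ commute as operators and not merely formally. This is exactly where the no-vertical-tangent-line conclusion of Lemma \ref{no tangent lines perpendicular to the horizontal plane} is used, through $c>0$. Everything after that, in particular the computation of $(|\gb_v|^2)_{t^\prime}$, is routine.
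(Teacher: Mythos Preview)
Your argument is correct and follows essentially the same route as the paper: both construct the new parameter $v$ by flowing along the integral curves of $\frac{\partial}{\partial t^\prime}$ (the paper phrases this as ``tracking each point under the normal evolution (\ref{evolution of the projection curve up to tangential motion})''), after which commutation is automatic, and then both compute $(|\gb_v|^2)_{t^\prime}$ exactly as you do. One small slip: $|\gb_u|=|\gamma_u|\sqrt{c}$, not $\sqrt{c}$, though this does not affect the conclusion $|\gb_v|\neq0$. The only substantive difference is that you deduce $|\gb_v|\neq0$ directly from ODE theory ($\Phi_t$ is a diffeomorphism of the compact $S^1$, so $u_v\neq0$), whereas the paper instead first derives (\ref{time derivative of square of gamma bar u}) on the interval where $|\gb_v|\neq0$, integrates $(\ln|\gb_v|^2)_{t^\prime}=-2c\kb^2$, and uses smoothness of $\gb$ on $(0,T)$ to rule out $|\gb_v|\to0$ at a hypothetical first failure time; your route is cleaner here.
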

\begin{proof}
For an $\epsilon>0$ fixed, the curve $\gb(\cdot,\epsilon)$ is an immersed curve, say, with parameter $v$. Immersion implies that $|\gb_v(v,\epsilon)|\neq0$.

Let us track each point $p$ on $\gb_0$ under the evolution of equation (\ref{evolution of the projection curve up to tangential motion}) and denote by $p_t\in\gb\att$ the point where  the point $p$ evolves to be at time $t$. For $\gb(\cdot,t), t\in(0,T)$, rechoose the parametrization by labeling the point $p_t$ the same parameter as $p_\epsilon$. In other words, we use the parametrization of the curve $\gb(\cdot,\epsilon)$ to parameterize $\gb(\cdot,t), t\in(0,T)$. Thus the value of the parameter is preserved under $\frac{\partial}{\partial t^\prime}$, which implies that
\begin{equation}
    \frac{\partial}{\partial t^\prime}\frac{\partial}{\partial v}
    =\frac{\partial}{\partial v}\frac{\partial}{\partial t^\prime}.
\end{equation}

Let us show that $\gb$ remains immersed under  this parametrization. Assume otherwise $\gb$ is only immersed on $[\epsilon,t_0)$, for some $t_0<T$ and $\epsilon$ is the fixed constant we chose at the start of this proof. 

When $|\gb_v|\neq0$, by direct computation,
\begin{equation}
\left(|\gb_v|^2\right)_{t^\prime}=2\gb_v\cdot\gb_{vt^\prime}
    =2\gb_v\cdot\gb_{t^\prime v}=2\gb_v\cdot\left(c\Bar{k}\Bar{N}\right)_v
    =2\gb_v\cdot \left(c\Bar{k}\Bar{N}_v\right)
\end{equation}
\begin{equation*}
    =2|\gb_v|\gb_v\cdot \left(c\Bar{k}\Bar{N}_\s\right)
    =2c\Bar{k}|\gb_v|\gb_v\cdot \left(-\kb\tb\right)
    =-2c\Bar{k}^2|\gb_v|^2. 
\end{equation*} 
In summary, 
\begin{equation}
\label{projection curves remain immersed with this parametrization}
\left(|\gb_v|^2\right)_{t^\prime}=-2c\Bar{k}^2|\gb_v|^2.
\end{equation}
Thus on $[\epsilon,t_0)$,
\begin{equation}
\left(\ln\left(|\gb_v|^2\right)\right)_{t^\prime}=-2c\Bar{k}^2.
\end{equation}
So 
\begin{equation}
|\gb_v|^2(v,t_0)=|\gb_v|^2(v,\epsilon)e^{-2\int_\epsilon^{t_0}c\Bar{k}^2(v,t^\prime)dt^\prime}. 
\end{equation}
The integral in the right hand side of the equation is finite because we know the projection curve is smooth on $(0,T)$ which contains $[\epsilon,t_0]$. Therefore $|\gb_v|^2(v,t_0)>0$. 

As a result, $\gb\att$ is immersed for $t\in[\epsilon,T)$ with the parameter $v$.
Since the right hand side of equation (\ref{projection curves remain immersed with this parametrization}) is negative, $|\gb_v|^2(v,t)>0$ is true for all $t\in(0,T)$.
\end{proof}
Doing computations as in  \cite[Section 3]{GageHamilton}, let us now derive various equations associated to this flow (equation (\ref{evolution of the projection curve up to tangential motion})).

We can compute how operators $\frac{\partial}{\partial \tp}$ and $\frac{\partial}{\partial \s}$ commute,
\begin{lem}
\label{commute sbar and tp}
For $t>0$,
\begin{equation}
    \frac{\partial}{\partial \tp}\frac{\partial}{\partial \s}= \frac{\partial}{\partial \s}\frac{\partial}{\partial \tp}+c\kb^2\frac{\partial}{\partial \s}.
\end{equation}
\end{lem}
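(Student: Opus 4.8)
The plan is to mimic the proof of Lemma~\ref{commute of operators} (Lemma 1.4 of \cite{AltschulerGrayson}): the identity $\partial_t\partial_s=\partial_s\partial_t+k^2\partial_s$ there comes from differentiating the factor $1/|\gamma_u|$ in $\partial_s$ against $\partial_t$; here we do the analogous thing in the reparametrization by $v$ constructed in the previous lemma, which is precisely the one in which $\partial_{\tp}$ and $\partial_v$ commute. So throughout I work with $\partial_{\s}=\tfrac{1}{|\gb_v|}\partial_v$ on $(0,T)$, where $|\gb_v|\neq0$.

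First I would expand, using that $\partial_{\tp}$ is a first-order operator,
\begin{equation*}
\partial_{\tp}\partial_{\s}=\partial_{\tp}\!\left(\frac{1}{|\gb_v|}\partial_v\right)
=\left(\partial_{\tp}\frac{1}{|\gb_v|}\right)\partial_v+\frac{1}{|\gb_v|}\,\partial_{\tp}\partial_v.
\end{equation*}
By the previous lemma $\partial_{\tp}\partial_v=\partial_v\partial_{\tp}$, so the last term is $\tfrac{1}{|\gb_v|}\partial_v\partial_{\tp}=\partial_{\s}\partial_{\tp}$. It then remains only to show $\partial_{\tp}\tfrac{1}{|\gb_v|}=c\kb^2/|\gb_v|$, which plugged back in gives exactly $\partial_{\tp}\partial_{\s}=c\kb^2\partial_{\s}+\partial_{\s}\partial_{\tp}$.

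To identify that coefficient I would invoke equation (\ref{time derivative of square of gamma bar u}), $\left(|\gb_v|^2\right)_{\tp}=-2c\kb^2|\gb_v|^2$, valid for $t\in(0,T)$ with $|\gb_v|^2>0$. Dividing by $2|\gb_v|^2$ gives $\partial_{\tp}\ln|\gb_v|=-c\kb^2$, hence $\partial_{\tp}|\gb_v|=-c\kb^2|\gb_v|$ and therefore $\partial_{\tp}(1/|\gb_v|)=c\kb^2/|\gb_v|$, as needed. There is no real obstacle: the statement is a bookkeeping identity, and the only points that require care are that one must be in the $v$-parametrization where $\partial_{\tp}$ and $\partial_v$ commute (supplied by the preceding lemma) and that (\ref{time derivative of square of gamma bar u}) holds with the stated sign on all of $(0,T)$.
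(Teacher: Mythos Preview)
Your proof is correct and follows essentially the same approach as the paper: both use the $v$-parametrization in which $\partial_{\tp}$ and $\partial_v$ commute, expand $\partial_{\s}=|\gb_v|^{-1}\partial_v$ via the product rule, and invoke equation (\ref{time derivative of square of gamma bar u}) to identify $\partial_{\tp}(1/|\gb_v|)=c\kb^2/|\gb_v|$. The only cosmetic difference is that the paper applies everything to a test function $f$ while you work directly at the operator level.
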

\begin{proof}
    For a function $f$,
    \begin{equation*}
        f_{\s\tp}=\left(\frac{f_v}{|\gb_v|}\right)_\tp
        =\frac{f_{v\tp}}{|\gb_v|}+f_v\left(\frac{1}{|\gb_v|}\right)_\tp
        =\frac{f_{\tp v}}{|\gb_v|}+f_v(-\frac{1}{2})\frac{\left(|\gb_v|^2\right)_\tp}{|\gb_v|^3}.
    \end{equation*}
By plugging equation (\ref{time derivative of square of gamma bar u}) into it,
\begin{equation*}
f_{\s\tp}=\frac{f_{\tp v}}{|\gb_v|}+f_v(-\frac{1}{2})\frac{-2c\Bar{k}^2|\gb_v|^2 }{|\gb_v|^3}
=\frac{f_{\tp v}}{|\gb_v|}+f_v\frac{c\Bar{k}^2}{|\gb_v|}
=f_{\tp\s}+c\kb^2f_\s.
\end{equation*}
\end{proof}

The time derivative of $\tb$ is given by:
\begin{lem}
For $t>0$,
\label{derivative of tbar}
    \begin{equation}
         \tb_\tp=(c\Bar{k})_\s\nb.
    \end{equation}
\end{lem}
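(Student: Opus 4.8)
The plan is to differentiate the relation $\tb = \gb_{\s}$ with respect to $t^\prime$ and use the commutator from Lemma \ref{commute sbar and tp} together with the Frenet--Serret formulas \eqref{Frenet-Serret formulas of the projection curve} and the evolution equation \eqref{evolution of the projection curve up to tangential motion}. Concretely, write $\tb = \partial_{\s}\gb$ and compute
\[
\tb_{\tp} = \partial_{\tp}\partial_{\s}\gb
= \partial_{\s}\partial_{\tp}\gb + c\kb^2\,\partial_{\s}\gb
= \partial_{\s}\!\left(c\kb\nb\right) + c\kb^2\,\tb,
\]
using Lemma \ref{commute sbar and tp} in the second equality and $\gb_{\tp} = c\kb\nb$ from \eqref{evolution of the projection curve up to tangential motion} in the third. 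Then expand $\partial_{\s}(c\kb\nb) = (c\kb)_{\s}\nb + c\kb\,\nb_{\s} = (c\kb)_{\s}\nb - c\kb^2\,\tb$ by the second Frenet--Serret formula $\nb_{\s} = -\kb\tb$. The $c\kb^2\,\tb$ terms cancel, leaving $\tb_{\tp} = (c\kb)_{\s}\nb$, which is the claim.

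The only point requiring a word of care is that all of this takes place for $t>0$, where by Lemma \ref{no tangent lines perpendicular to the horizontal plane} the projection curve $\gb\att$ is immersed (indeed uniformly convex once we have established that), so that $\partial_{\s}$, $\nb$, $\kb$ are all well defined and smooth, the reparametrization lemma applies, and the commutator identity of Lemma \ref{commute sbar and tp} is valid. One should also note that $\tb_{\tp}$ is automatically normal to $\tb$ — differentiating $|\tb|^2 = 1$ gives $\tb\cdot\tb_{\tp} = 0$ — which is consistent with the answer being a multiple of $\nb$ and serves as a sanity check that no tangential component was dropped (this is exactly the reason for working with $\partial_{\tp}$ rather than $\partial_{t}$, since $\partial_{\tp}\gb\perp\partial_{\s}\gb$ by construction).

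I do not anticipate a genuine obstacle here: this is the standard Gage--Hamilton-style bookkeeping (cf. \cite[Section 3]{GageHamilton}), and every ingredient — the commutator, the Frenet--Serret formulas, the normal-velocity form of the flow — has already been set up in the preceding lemmas. The mild bookkeeping subtlety is simply to keep track of the sign in $\nb_{\s}=-\kb\tb$ so that the cancellation of the $c\kb^2\tb$ terms is seen correctly.
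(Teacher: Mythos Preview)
Your proof is correct and is essentially identical to the paper's own argument: both differentiate $\tb=\gb_{\s}$, apply the commutator Lemma~\ref{commute sbar and tp}, substitute $\gb_{\tp}=c\kb\nb$, expand via the product rule, and use $\nb_{\s}=-\kb\tb$ to cancel the tangential term. Your additional remarks on well-definedness for $t>0$ and the orthogonality sanity check are sound but not needed for the argument.
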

\begin{proof}
By definition of $\tb$ and Lemma \ref{commute sbar and tp},
\begin{equation*}
\tb_\tp=\gb_{\s\tp}=\gb_{\tp\s}+c\kb^2\tb.
\end{equation*}
Considering equation (\ref{evolution of the projection curve up to tangential motion}),
    \begin{equation*}
        \tb_\tp=(c\Bar{k}\Bar{N})_\s+c\kb^2\tb
        =(c\Bar{k})_\s\nb+(c\Bar{k})\Bar{N}_\s+c\kb^2\tb.
    \end{equation*}
It follows from the Frenet-Serret formulas(equation (\ref{Frenet-Serret formulas of the projection curve})),
    \begin{equation*}
        \tb_\tp=(c\Bar{k})_\s\nb+(c\Bar{k})(-\kb\tb)+c\kb^2\tb
        =(c\Bar{k})_\s\nb.
    \end{equation*}
\end{proof}

Let $\theta$ be the angle between the unit tangent vector $\tb$ and the positive $x$-axis, we have:
\begin{lem}
\label{time derivative of theta}
For $t>0$,
    \begin{equation}
        \theta_\tp=(c\Bar{k})_\s.
    \end{equation}
\end{lem}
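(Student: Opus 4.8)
The plan is to derive $\theta_{t'} = (c\bar{k})_{\bar{s}}$ from Lemma \ref{derivative of tbar} by differentiating the relation between the tangent vector $\bar{T}$ and the angle $\theta$. Since $\bar{T} = (\cos\theta, \sin\theta)$ and $\bar{N} = (-\sin\theta, \cos\theta)$ (choosing orientation consistently with the Frenet–Serret formulas \eqref{Frenet-Serret formulas of the projection curve}), differentiating $\bar{T}$ with respect to $t'$ gives
\begin{equation*}
\bar{T}_{t'} = (-\sin\theta, \cos\theta)\,\theta_{t'} = \theta_{t'}\,\bar{N}.
\end{equation*}
Comparing this with Lemma \ref{derivative of tbar}, which states $\bar{T}_{t'} = (c\bar{k})_{\bar{s}}\bar{N}$, and using that $\bar{N}$ is a unit vector, we immediately conclude $\theta_{t'} = (c\bar{k})_{\bar{s}}$.

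The one point requiring a small amount of care is that $\theta$ is only locally well defined (it is defined modulo $2\pi$), so one should either work locally on an arc where a smooth branch of $\theta$ exists — which suffices since the identity is pointwise — or note that $\theta$ can be chosen as a smooth function of $(v,t)$ on the universal cover, using that $\bar{\gamma}(\cdot,t)$ is immersed for $t > 0$ (established in the previous lemmas) so that $\bar{T}$ is a well-defined smooth unit-vector-valued function and its lift to an angle is smooth. With that understood, the computation above is valid wherever it is carried out.

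I do not anticipate a genuine obstacle here: the result is an immediate consequence of Lemma \ref{derivative of tbar} together with the elementary fact that $\partial_{t'}(\cos\theta,\sin\theta) = \theta_{t'}(-\sin\theta,\cos\theta)$. The only thing to keep straight is the sign/orientation convention relating $\bar{N}$ to $\theta$, which must be the same convention under which $\bar{T}_{\bar{s}} = \bar{k}\bar{N}$ holds; once that is fixed consistently, the identity follows with no further work.
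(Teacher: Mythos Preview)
Your proposal is correct and is essentially identical to the paper's own proof: the paper also writes $\bar T=(\cos\theta,\sin\theta)$, $\bar N=(-\sin\theta,\cos\theta)$, differentiates to get $\bar T_{t'}=\theta_{t'}(-\sin\theta,\cos\theta)$, and compares with Lemma~\ref{derivative of tbar}. Your extra remark about the local well-definedness of $\theta$ is a reasonable clarification but is not needed for the paper's argument.
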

\begin{proof}
    By definition $\tb=(\cos\theta,\sin\theta)$ and $\nb=(-\sin\theta,\cos\theta)$.
    
    By direct computation,
    \begin{equation*}
        \tb_\tp=(-\sin\theta,\cos\theta)\theta_\tp.
    \end{equation*}
    Lemma \ref{derivative of tbar} implies that
    \begin{equation*}
        \tb_\tp=(c\Bar{k})_\s\nb=(c\Bar{k})_\s(-\sin\theta,\cos\theta).
    \end{equation*}
    This lemma is proved by comparing these two equations.
\end{proof}

We now can derive:
\begin{lem}[Evolution equation of $\Bar{k}$]
\label{changing tangential motion, evolution of projection curve in the plane}
For $t>0$,
    \begin{equation}
    \label{evolution equation of k bar}
       \Bar{k}_{t^{\prime}}={(c\Bar{k})}_{\Bar{s}\Bar{s}}+c\Bar{k}^3=
       c\Bar{k}_{\Bar{s}\Bar{s}}+2c_{\Bar{s}}\Bar{k}_{\Bar{s}}+(c_{\Bar{s}\Bar{s}}+c\Bar{k}^2)\Bar{k}.
    \end{equation} 
\end{lem}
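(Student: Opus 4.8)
The plan is to differentiate the angle $\theta$ along the projection curve and use the Frenet–Serret relations, exactly paralleling the classical Gage–Hamilton computation. Recall that $\bar k = \theta_{\bar s}$ by the definition of curvature together with $\bar T = (\cos\theta,\sin\theta)$, $\bar T_{\bar s} = \bar k \bar N$. So I would compute $\bar k_{t'} = (\theta_{\bar s})_{t'}$ by commuting $\partial_{t'}$ past $\partial_{\bar s}$ using Lemma \ref{commute sbar and tp}, then substituting Lemma \ref{time derivative of theta} for $\theta_{t'}$.

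Concretely, the first step is
\begin{equation*}
\bar k_{t'} = (\theta_{\bar s})_{t'} = (\theta_{t'})_{\bar s} + c\bar k^2 \theta_{\bar s},
\end{equation*}
where the commutator term $c\bar k^2\,\partial_{\bar s}$ applied to $\theta$ produces $c\bar k^2\theta_{\bar s} = c\bar k^3$ since $\theta_{\bar s} = \bar k$. The second step inserts $\theta_{t'} = (c\bar k)_{\bar s}$ from Lemma \ref{time derivative of theta}, giving
\begin{equation*}
\bar k_{t'} = \big((c\bar k)_{\bar s}\big)_{\bar s} + c\bar k^3 = (c\bar k)_{\bar s \bar s} + c\bar k^3,
\end{equation*}
which is the first form claimed. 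The third step is just to expand $(c\bar k)_{\bar s \bar s}$ by the Leibniz rule: $(c\bar k)_{\bar s\bar s} = c_{\bar s\bar s}\bar k + 2c_{\bar s}\bar k_{\bar s} + c\bar k_{\bar s\bar s}$, and then collect the $c\bar k^3$ term with $c_{\bar s\bar s}\bar k$ to get the coefficient $(c_{\bar s\bar s} + c\bar k^2)\bar k$, yielding the second form.

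There is essentially no analytic obstacle here; the only thing to be careful about is that every operation is legitimate for $t>0$, which is exactly where $\bar\gamma(\cdot,t)$ is immersed (Lemma \ref{no tangent lines perpendicular to the horizontal plane}) and where the reparametrization making $\partial_{t'}$ and $\partial_v$ commute has been set up, so that Lemmas \ref{commute sbar and tp} and \ref{time derivative of theta} apply. The mildly delicate point — and the one I would state explicitly — is that $\theta$ is only defined locally (modulo $2\pi$), but since we only ever differentiate it, the formula is well-defined globally along the curve; alternatively one can phrase the whole argument in terms of $\bar T$ and $\bar N$ via Lemma \ref{derivative of tbar} without ever introducing $\theta$, computing $\bar k_{t'}$ from $\bar T_{t'}\cdot\bar N = (c\bar k)_{\bar s}$ and $\bar k = \bar T_{\bar s}\cdot\bar N$. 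Either route is a few lines of routine Frenet bookkeeping.
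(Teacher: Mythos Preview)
Your proposal is correct and follows exactly the same route as the paper: write $\bar k=\theta_{\bar s}$, use the commutator lemma to get $\bar k_{t'}=\theta_{t'\bar s}+c\bar k^2\theta_{\bar s}$, insert $\theta_{t'}=(c\bar k)_{\bar s}$, and expand by Leibniz. Your remarks on the local definition of $\theta$ and the alternative computation via $\bar T,\bar N$ are fine elaborations but not needed for the argument.
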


\begin{proof}
    Since $\gb\att$ is a planar curve, $\kb=\theta_\s$.
    
    By Lemma \ref{commute sbar and tp} and Lemma \ref{time derivative of theta},
    \begin{equation*}
        \kb_\tp=\theta_{\s\tp}=\theta_{\tp\s}+c\kb^2\theta_\s
        =(c\Bar{k})_{\s\s}+c\kb^3.
    \end{equation*}
\end{proof}

To apply the Sturmian theorem\cite{angenent1988zero} and the maximum principle to the 
    equation (\ref{evolution equation of k bar}), it is indispensable that the second-order coefficient $c$ is strictly positive. Actually,
\begin{lem}
\label{explanation that the coefficient c is positive}
    For all $t\in(0,T)$, it holds that $c(\cdot,t)>0$.
\end{lem}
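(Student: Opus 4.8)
Recall that $c = x_s^2 + y_s^2 = |\gb_s|^2 = |P_{xy}(T)|^2$, where $T$ is the unit tangent of the space curve $\gamma$. Thus $c(\cdot,t) \geq 0$ always, and $c(u,t) = 0$ precisely when $P_{xy}(T)(u,t) = 0$, i.e. when $\gamma$ has a vertical tangent line at $u$. So the statement $c(\cdot,t) > 0$ for $t \in (0,T)$ is exactly the content of the ``no vertical tangent lines'' conclusion of Lemma \ref{no tangent lines perpendicular to the horizontal plane}, which has already been proved under the hypotheses of Theorem \ref{main theorem}.

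The plan is therefore short: first observe the algebraic identity $c = |P_{xy}(T)|^2 = |P_{xy}(\gamma_s)|^2$, so that $c$ vanishes at a point $(u,t)$ if and only if $\gamma_s(u,t)$ is orthogonal to the $xy$-plane, i.e. if and only if the tangent line of $\gamma(\cdot,t)$ at $u$ is vertical in the sense of the definition of verticality. Then invoke Lemma \ref{no tangent lines perpendicular to the horizontal plane}, which asserts that under the conditions of Theorem \ref{main theorem}, for every $t \in (0,T)$ the curve $\gamma(\cdot,t)$ has no vertical tangent lines. Combining the two, $c(u,t) > 0$ for all $u$ and all $t \in (0,T)$, which is the claim.

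I do not anticipate a genuine obstacle here: all the work has been done in Lemma \ref{no tangent lines perpendicular to the horizontal plane}, and this lemma is essentially a restatement of that conclusion in terms of the coefficient $c$ that appears in the evolution equations for $\gb$ and $\kb$. The only point requiring a sentence of care is the identification of ``$c = 0$'' with ``vertical tangent line'': since $s$ is the arc-length parameter of $\gamma$, we have $|\gamma_s| = 1$, so $\gamma_s = (x_s, y_s, z_s, \dots)$ is a unit vector and $c = x_s^2 + y_s^2$ is the squared length of its horizontal component; this is zero exactly when the horizontal component vanishes, i.e. when $\gamma_s$ is purely vertical, which is the definition of a vertical tangent line. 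With that identification the proof is immediate.
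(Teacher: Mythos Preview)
Your proposal is correct and follows exactly the same approach as the paper's own proof: both simply invoke Lemma~\ref{no tangent lines perpendicular to the horizontal plane} (no vertical tangent lines for $t>0$) and note that $c = x_s^2 + y_s^2$ vanishes precisely at a vertical tangent. Your version just spells out the identification $c=0 \Leftrightarrow$ vertical tangent in slightly more detail than the paper, which states it in a single sentence.
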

\begin{proof}
In Lemma \ref{no tangent lines perpendicular to the horizontal plane} we proved that the tangent lines to $\gamma\att$ are never vertical when $t>0$. Therefore $c(u,t)>0$ for all $t\in(0,T)$.
\end{proof}

\subsection*{Proof of Theorem \ref{main theorem}(a)}

It follows from Lemma \ref{no tangent lines perpendicular to the horizontal plane} and Lemma \ref{explanation that the coefficient c is positive} that $\forall t\in \left(0,T\right)$, $c(\cdot,t)>0$, $\Bar{\gamma}(\cdot,t)$ is immersed and convex. 
Notice $\Bar{k}(\cdot,t)\geq0$ because of convexity. The remaining part is to prove that $\Bar{\gamma}(\cdot,t)$ is uniformly convex.
    
    For each $\epsilon>0$, 
    $-M:=\inf\limits_{t\in\left[\epsilon,T-\epsilon\right]}(c_{\Bar{s}\Bar{s}}+c\Bar{k}^2)>-\infty$. 
    So equation (\ref{evolution equation of k bar}) implies that on $\left[\epsilon,T-\epsilon\right]$,
\begin{align}
\Bar{k}_{t^{\prime}}&\geq c\Bar{k}_{\Bar{s}\Bar{s}}+2c_{\Bar{s}}\Bar{k}_{\Bar{s}}-M\Bar{k}\\
&=c\frac{1}{|\gb_v|}\left(\frac{\kb_v}{|\gb_v|}\right)_v+2c_{\Bar{s}}\frac{1}{|\gb_v|}\Bar{k}_v-M\Bar{k}\\
&=c\frac{\kb_{vv}}{|\gb_v|^2}+\left[c\frac{1}{|\gb_v|}\left(\frac{1}{|\gb_v|}\right)_v+2c_{\Bar{s}}\frac{1}{|\gb_v|}\right]\Bar{k}_v-M\Bar{k}\\
&=A(v,t)\kb_{vv}+B(v,t)\kb_v-M\Bar{k}.
\end{align}
    
    We compare $\bar{k}$ with the solution to
    $$h_{t^\prime}=-Mh,h(\epsilon)=0.$$
    Notice $h\equiv0$ is the solution to this ODE. 
    
Since $\inf\limits_{t\in\left[\epsilon,T-\epsilon\right]}c(\cdot,t)>0$, 
on time interval $[\epsilon,T-\epsilon]$, the coefficient $A$ is bounded from below by some positive constant. By the strong maximum principle (see for example \cite{protter2012maximum}), $\Bar{k}(\cdot,t)>0$, $\forall t\in\left(\epsilon,T-\epsilon\right]$, $\forall\epsilon>0$.
    Thus $\forall t\in\left(0,T\right)$, $\Bar{\gamma}(\cdot,t)$ is uniformly convex.

\section{The three-point condition}
In this section, let us restrict to curves in $\mathbb{R}^3$ and introduce the three-point condition, which will play a central role in our argument in section 4. According to \cite[Chapter IV]{Rado1933Plateau}, this three-point condition first appeared in a short announcement of Hilbert and then was used by Lebesgue to develop a first attack on the problem of Plateau in the non-parametric form.

We will discuss a sufficient condition for the three-point condition. As far as we know, this sufficient condition was first proposed in \cite{hartman1966bounded}. We will give a direct proof of this sufficient condition in our setting.\\

\subsubsection*{Slope of lines and planes in $\mathbb R^3$}
\begin{defi}[Slope of a line]
    For a line $L\subset\mathbb{R}^3$, the slope $S_L$ of the line $L$ is defined to be $\tan\Theta$, 
    where $\Theta$ is the nonnegative acute angle between the line $L$ and the $xy$-plane.
    For a vertical line $L$, $S_L:=+\infty$, and for a horizontal line $L$, $S_L:=0$.
\end{defi}

\begin{defi}[Slope of a plane]
    For a plane $P\subset\mathbb{R}^3$, the slope $S_P$ of the plane $P$ is defined to be $\tan\Theta$, 
    where $\Theta$ is the nonnegative acute dihedral angle between the plane $P$ and the $xy$-plane.
    For a vertical plane $P$, $S_P:=+\infty$, and for a horizontal plane $P$, $S_P:=0$.
\end{defi}

With the above notion of slope, we notice:
\begin{lem}
\label{a comparison of slopes}
    For a line $L$ contained in a plane $P$, we have $S_L\leq S_P$.\\
\end{lem}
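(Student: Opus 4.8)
The plan is to reduce the statement to an elementary fact about angles between vectors and a fixed plane, namely that if $L \subset P$ then the angle $L$ makes with the $xy$-plane is at most the dihedral angle $P$ makes with the $xy$-plane. First I would dispose of the degenerate cases: if $P$ is horizontal then $L$ is horizontal, so $S_L = 0 = S_P$; if $P$ is vertical then $S_P = +\infty$ and the inequality is automatic. So assume $P$ is neither horizontal nor vertical. Writing $\nu$ for a unit normal to $P$ and $e_3 = (0,0,1)$ for the vertical direction, the dihedral angle $\Theta_P \in (0,\pi/2)$ between $P$ and the $xy$-plane satisfies $\cos\Theta_P = |\nu \cdot e_3|$; equivalently $\sin\Theta_P = |P_{xy}(\nu)|$ (taking $\nu$ unit), so $S_P = \tan\Theta_P = |P_{xy}(\nu)| / |\nu\cdot e_3|$.

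Next I would compute $S_L$ in the same coordinates. Let $\tau$ be a unit direction vector for $L$. Since $L \subset P$, we have $\tau \cdot \nu = 0$. The angle $\Theta_L \in [0,\pi/2]$ between $L$ and the $xy$-plane satisfies $\sin\Theta_L = |\tau \cdot e_3|$ and $\cos\Theta_L = |P_{xy}(\tau)|$, so $S_L = |\tau\cdot e_3| / |P_{xy}(\tau)|$ when $L$ is not vertical (and if $L$ happens to be vertical, then $\tau = \pm e_3$ is normal to the horizontal part of $P$, forcing $P$ vertical, a case already handled). Thus it remains to show
\[
\frac{|\tau\cdot e_3|}{|P_{xy}(\tau)|} \le \frac{|P_{xy}(\nu)|}{|\nu\cdot e_3|},
\]
i.e. $|\tau\cdot e_3|\,|\nu\cdot e_3| \le |P_{xy}(\tau)|\,|P_{xy}(\nu)|$, for unit orthogonal $\tau,\nu$.

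The key step, which I expect to be the only substantive point, is this last inequality. One clean way: decompose $e_3$ in the orthonormal frame obtained by completing $\{\tau,\nu\}$ to an orthonormal basis $\{\tau,\nu,w\}$ of $\mathbb{R}^3$, so $e_3 = (\tau\cdot e_3)\tau + (\nu\cdot e_3)\nu + (w\cdot e_3)w$ and hence $(\tau\cdot e_3)^2 + (\nu\cdot e_3)^2 + (w\cdot e_3)^2 = 1$. Applying $P_{xy}$ and using $|P_{xy}(v)|^2 = |v|^2 - (v\cdot e_3)^2$ for any $v$, one gets $|P_{xy}(\tau)|^2 = 1 - (\tau\cdot e_3)^2$ and $|P_{xy}(\nu)|^2 = 1 - (\nu\cdot e_3)^2$. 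Setting $a = (\tau\cdot e_3)^2$ and $b = (\nu\cdot e_3)^2$, we have $a,b \ge 0$ and $a + b \le 1$, and the desired inequality is $ab \le (1-a)(1-b)$, which expands to $ab \le 1 - a - b + ab$, i.e. $a + b \le 1$ — exactly what we have. This closes the argument; the main obstacle is simply choosing coordinates so that this reduction to $a+b\le 1$ becomes transparent, and being careful that the formula $S_L = |\tau\cdot e_3|/|P_{xy}(\tau)|$ degenerates correctly in the vertical cases already excluded.
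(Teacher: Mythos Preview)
Your argument is correct. The paper does not actually give a proof of this lemma; it is stated as an elementary observation (``With the above notion of slope, we notice:\ldots'') and left unproved. Your reduction via the orthonormal frame $\{\tau,\nu,w\}$ to the inequality $a+b\le 1$ is clean and handles the degenerate cases properly, so there is nothing to compare and nothing to fix.
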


\subsubsection*{Intersection number}
\begin{defi}
     For a plane $P$ and a space curve $\gamma$, let us define the intersection number $|P\bigcap\gamma|$ to be $ \#\{u\in S^1|\gamma(u)\in P\}$, where $\#$ counts the number of elements of one set.\\
\end{defi}

\subsubsection*{Secant lines and osculating planes} 
\begin{defi}[Secant lines]
    For a curve $\gamma$ in $\mathbb{R}^3$, we would call a line $l\subset\mathbb{R}^3$ a secant line of the curve $\gamma$ if the line $l$ intersects the curve $\gamma$ at a minimum of two distinct points.
\end{defi}

\begin{defi}[Osculating planes]
    For a curve $\gamma$ in $\mathbb{R}^3$ with positive curvature, for each
    $u\in S^1$, the osculating plane at the point $\gamma(u)$ is defined to be the plane $P(u)$  that passes through the point $\gamma(u)$ and is spanned by the unit tangent vector $T(u)$ and the unit normal vector $N(u)$.          
\end{defi}

\begin{rmk}
    We do need the curvature of the curve to be positive to have well-defined normal vectors to define the osculating planes.
\end{rmk}

\subsection*{The three-point condition}
\begin{defi}[The three-point condition\footnote{In comparison with \cite[Lemma 3.58 ii)]{hättenschweiler2015curve} which discusses a similar topic, the novelty of the three-point condition lies in demonstrating that the minimum of the constant $\Delta$ at each time $t$ is monotonic under CSF with respect to the time $t$ while $d_1,d_2$ in \cite[Lemma 3.58)]{hättenschweiler2015curve} are not necessarily monotonic with respect to the time $t$. See Proposition \ref{properties of space curves during evolution} for more details.}]
A  space curve $\gamma$ in $\mathbb{R}^3$ satisfies a three-point condition with a constant $\Delta\in\left[0,+\infty\right)$ (with respect to the $xy$-plane) 
if $S_P\leq\Delta$ for all planes $P\subset\mathbb{R}^3$ satisfying $|P\bigcap\gamma|\geq3$. 
\end{defi}

Let us point out the following sufficient condition for the three-point condition, as a corollary of \cite{hartman1966bounded}. We will give a direct proof in our setting at the end of this section.
\begin{lem}
\label{sufficient conditions for three-point condition}
Let $\gamma\subset\mathbb{R}^3$ be an embedded smooth curve that has no vertical tangent lines. If $P_{xy}|_\gamma$ is injective and the projection curve $\Bar{\gamma}$ is uniformly convex,
    then $\gamma$ satisfies a three-point condition with some constant $\Delta\in\left[0,+\infty\right)$.
\end{lem}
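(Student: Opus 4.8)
The plan is to argue by contradiction: suppose no finite $\Delta$ works, so there is a sequence of planes $P_j$ with $|P_j\cap\gamma|\geq 3$ and $S_{P_j}\to+\infty$. Since the space of affine planes meeting the compact curve $\gamma$ is itself compact (each such plane is determined by a unit normal $\bm n_j$ and a signed distance, both lying in compact sets), after passing to a subsequence we may assume $P_j\to P_\infty$ with $S_{P_\infty}=+\infty$, i.e. $P_\infty$ is a \emph{vertical} plane. For each $j$ pick three distinct parameters $u_j^1,u_j^2,u_j^3\in S^1$ with $\gamma(u_j^i)\in P_j$; passing to a further subsequence, $u_j^i\to u_\infty^i$. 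Two cases arise depending on whether the three limit points $u_\infty^1,u_\infty^2,u_\infty^3$ are all distinct or some coincide.

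In the first case, $P_\infty$ is a vertical plane meeting $\gamma$ in at least three distinct points $\gamma(u_\infty^1),\gamma(u_\infty^2),\gamma(u_\infty^3)$. Projecting to the $xy$-plane, $P_{xy}(P_\infty)$ is a line $\ell$ in $\mathbb R^2$, and since $P_{xy}|_\gamma$ is injective, $\ell$ passes through three distinct points of $\bar\gamma$. But a uniformly convex curve is strictly convex (as noted after Definition \ref{definitions of three notions of convexity}), so no line meets $\bar\gamma$ in more than two points — contradiction. In the second case, at least two of the limit parameters agree, say $u_j^1,u_j^2\to u_\infty$ with $u_j^1\neq u_j^2$; then the secant line through $\gamma(u_j^1)$ and $\gamma(u_j^2)$ lies in $P_j$, so by Lemma \ref{a comparison of slopes} its slope is at most $S_{P_j}$... wait, that is the wrong inequality direction — instead I should use that the secant line converges to the tangent line $T(u_\infty)$ of $\gamma$ (by smoothness, the secant through two nearby points tends to the tangent), and this tangent line is contained in the limiting plane $P_\infty$, which is vertical. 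Hence by Lemma \ref{a comparison of slopes}, $S_{T(u_\infty)}\leq S_{P_\infty}=+\infty$ gives nothing directly; the correct point is that we need $\gamma$ to have no vertical tangent lines, which is a hypothesis. To exploit it, I note: if $u_j^1, u_j^2 \to u_\infty$ and $u_j^3 \to u_\infty'$ with $u_\infty \ne u_\infty'$, the plane $P_\infty$ contains both the tangent line $T(u_\infty)$ and the point $\gamma(u_\infty')$; if all three collapse, $P_\infty$ contains $T(u_\infty)$ and we get a limiting constraint from the second-order expansion forcing $P_\infty \supseteq \mathrm{span}(T(u_\infty),N(u_\infty))$, the osculating plane. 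Either way $P_\infty$ contains a line whose projection is a genuine (nondegenerate) line in the $xy$-plane because $T(u_\infty)$ is not vertical; so $P_{xy}(P_\infty)$ is a line, not a point, and we reduce to the strict-convexity argument: that line meets $\bar\gamma$ with multiplicity (counted with the collapsing/tangency contributing multiplicity $\geq 2$) at least three, contradicting that a strictly convex curve meets no line in more than two points, where a tangency counts as two.

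The step I expect to be the main obstacle is making the degenerate-limit bookkeeping precise — specifically, showing that when parameters collapse, the limiting vertical plane $P_\infty$ still forces its $xy$-projection to be an \emph{honest} line (using the no-vertical-tangent hypothesis so that $T(u_\infty)$ projects to a nonzero vector), and then correctly counting intersection multiplicities of that line with the strictly convex projection curve $\bar\gamma$, i.e. verifying that a point of tangency of $\bar\gamma$ with a line contributes at least two to the intersection count in the appropriate limiting sense. Once that multiplicity accounting is in place, strict convexity of $\bar\gamma$ (guaranteed by uniform convexity) closes every case. I would also remark that this recovers the relevant corollary of \cite{hartman1966bounded}, but the self-contained argument above suffices for the curves considered here.
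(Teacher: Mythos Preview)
Your compactness-and-case-analysis strategy is exactly the one the paper uses (it is the content of Proposition~\ref{comparison of different properties of space curves}(d) together with parts (a) and (c)), so the overall architecture is correct. The distinct-limits case and the two-collapse case are handled in the paper just as you describe: three distinct limits contradict $(\Gamma_1)\Leftrightarrow(\Gamma_1')$, and two limits coinciding forces the limiting vertical plane to contain a non-vertical tangent line together with a separate point of $\gamma$, whose projection would be a tangent line to the strictly convex curve $\bar\gamma$ passing through a second point of $\bar\gamma$---impossible.

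The genuine divergence is in the triple-collapse case. The paper does \emph{not} argue by intersection multiplicity on $\bar\gamma$. Instead it first shows $k>0$ everywhere (Claim~\ref{positive curvature}, using $P_{xy}(\gamma_{ss})\cdot\bar\gamma_{\bar s\bar s}=(x_s^2+y_s^2)\bar k^2>0$), so that the binormal $B$ is well defined, and then computes that $B\cdot(0,0,1)=0$ iff $x_s^2+y_s^2=0$. Since the latter is excluded by the no-vertical-tangent hypothesis, the osculating plane is never vertical; by compactness the osculating-plane slopes are uniformly bounded (condition $(\Gamma_5)$), and this immediately disposes of the triple-collapse case since the limit of planes through three coalescing curve points is the osculating plane. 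Your proposed route---projecting to $\ell=P_{xy}(P_\infty)$ and arguing that $\ell$ meets $\bar\gamma$ to order $\geq 3$---can be made rigorous via Rolle applied to $f_j(u)=\bm n_j\cdot\gamma(u)-d_j$: three zeros of $f_j$ collapsing to $u_\infty$ force $f(u_\infty)=f'(u_\infty)=f''(u_\infty)=0$ for the limit $f=\bm n_\infty\cdot\bar\gamma-d_\infty$, and since $\bm n_\infty=\pm\bar N(u_\infty)$ one gets $f''(u_\infty)=\pm|\bar\gamma_u|^2\bar k(u_\infty)\neq 0$ from uniform convexity, a contradiction. This is arguably more economical than the paper's route because it never needs $k>0$ for the \emph{space} curve; but as written your sketch invokes $N(u_\infty)$ and ``$P_\infty\supseteq\operatorname{span}(T,N)$'' without having established $k(u_\infty)>0$, so you should either prove that first (as the paper does) or reformulate the collapse case purely in terms of $\bar\gamma$ as above.
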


\begin{rmk}
    Actually, \cite{hartman1966bounded} used a different definition of uniformly convex planar curves
    (see \cite[(1.3) on Page 512]{hartman1968convex}). It can be proved to be equivalent to our definition \ref{definitions of three notions of convexity}$(c)$ for smooth curves.\\
\end{rmk}

\subsection*{Relations of several closely related conditions}
For a curve $\gamma$ in $\mathbb{R}^3$ and a constant $\Delta\in\left[0,+\infty\right)$,
we label the following conditions\footnote{We will show that $(\Gamma_1)\Leftrightarrow(\Gamma_1^\prime)$ and $(\Gamma_4)\Leftrightarrow(\Gamma_4^\prime)$.}:\\
    $(\Gamma_1)$ Each vertical plane $P\subset\mathbb{R}^3$ intersects $\gamma$ in at most two points.   \\
    $(\Gamma_1^\prime)$ The map $P_{xy}|_\gamma$ is injective and the projection curve $\Bar{\gamma}$ is strictly convex.     \\
    $(\Gamma_2)$   The curve $\gamma$ satisfies a three-point condition with some constant $\Delta$.  \\
    $(\Gamma_3)$    Slopes of all secant lines of $\gamma$ are bounded by some constant $\Delta$.\\
    $(\Gamma_4)$    Slopes of all tangent lines of $\gamma$ are bounded by some constant $\Delta$.\\
    $(\Gamma_4^\prime)$ The curve $\gamma$ has no vertical tangent lines.\\
    $(\Gamma_5)$   The curvature $k$ is positive and the slopes of all osculating planes are bounded by some constant $\Delta$.\\

   Let us discuss their relations. 
    
    First, by comparing the definitions directly,
\begin{lem}    $(\Gamma_2)\Rightarrow(\Gamma_1)$.
\end{lem}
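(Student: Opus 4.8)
The goal is to show $(\Gamma_2)\Rightarrow(\Gamma_1)$; that is, if $\gamma$ satisfies a three-point condition with some finite constant $\Delta$, then every vertical plane meets $\gamma$ in at most two points. I would argue by contraposition. Suppose $(\Gamma_1)$ fails, so there is a vertical plane $P_0$ with $|P_0\cap\gamma|\geq 3$. A vertical plane has slope $S_{P_0}=+\infty$ by definition, and $+\infty\leq\Delta$ is false for any real constant $\Delta\in[0,+\infty)$. Hence $(\Gamma_2)$ — which requires $S_P\leq\Delta$ for \emph{all} planes $P$ with $|P\cap\gamma|\geq 3$ — cannot hold, giving the contrapositive and therefore the implication.

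\textbf{Order of steps.} First, recall the definition of the three-point condition: $\gamma$ satisfies it with constant $\Delta$ precisely when $S_P\leq\Delta$ for every plane $P$ with $|P\cap\gamma|\geq3$. Second, observe that a vertical plane is in particular a plane, and by the definition of the slope of a plane, $S_P=+\infty$ whenever $P$ is vertical. Third, combine these: if some vertical plane $P_0$ satisfies $|P_0\cap\gamma|\geq3$, then the three-point condition would force $+\infty=S_{P_0}\leq\Delta<\infty$, a contradiction. Conclude that under $(\Gamma_2)$ no vertical plane can intersect $\gamma$ in three or more points, i.e. $(\Gamma_1)$ holds.

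\textbf{Main obstacle.} There is essentially no obstacle here; the implication is immediate from unwinding the two definitions, which is presumably why the authors phrase it as following ``by comparing the definitions directly.'' The only point requiring any care is making sure the intersection number $|P\cap\gamma|$ in the definition of the three-point condition is the same notion appearing in $(\Gamma_1)$ — both count $\#\{u\in S^1:\gamma(u)\in P\}$ — and that the quantifier in the three-point condition ranges over \emph{all} planes, vertical ones included; one does not get to exclude vertical planes from the hypothesis. With that understood, the proof is a one-line contradiction argument.
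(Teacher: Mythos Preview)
Your proof is correct and is essentially the same as the paper's: both observe that the three-point condition is equivalent, by contraposition, to the statement that any plane with slope exceeding $\Delta$ meets $\gamma$ in at most two points, and then note that vertical planes have slope $+\infty>\Delta$.
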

\begin{proof}
The definition of the three-point condition is equivalent to the requirement that the intersection number $|P\bigcap\gamma|\leq2$ for any plane $ P$ with slope $S_P>\Delta$.
\end{proof}

Furthermore, 
\begin{prop}
\label{comparison of different properties of space curves}
The following conclusion holds:\\
$(a)$ $(\Gamma_1)\Leftrightarrow(\Gamma_1^\prime)$ and $(\Gamma_4)\Leftrightarrow(\Gamma_4^\prime)$.\\
    $(b)$  A chain of implications: $(\Gamma_2)\Rightarrow(\Gamma_3)\Rightarrow(\Gamma_4)$ with same constant $\Delta$.\\
    $(c)$   If we assume $(\Gamma_1)$, then $(\Gamma_4)\Rightarrow(\Gamma_3)$ with possibly different $\Delta$. \\
    $(d)$   If we assume $(\Gamma_1)$ and $(\Gamma_5)$, then $(\Gamma_3)\Rightarrow(\Gamma_2)$ with possibly different $\Delta$. \\
\end{prop}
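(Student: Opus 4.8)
The plan is to prove the four parts in the stated order, each one reusing those before it. \emph{Parts $(a)$ and $(b)$.} A vertical plane is exactly $P_{xy}^{-1}(\ell)$ for a line $\ell\subset\mathbb R^2$, and $\gamma(u)$ lies on it precisely when $\gb(u)\in\ell$; hence $(\Gamma_1)$ says exactly that $\gb$ meets every line in at most two points, i.e.\ that $\gb$ is strictly convex, and by the remark following Definition \ref{definitions of three notions of convexity} a strictly convex curve is simple, which is the injectivity of $P_{xy}|_\gamma$. This gives $(\Gamma_1)\Leftrightarrow(\Gamma_1^\prime)$. A vertical tangent line has slope $+\infty$, so $(\Gamma_4)\Rightarrow(\Gamma_4^\prime)$ is immediate, and conversely $u\mapsto S_{T(u)}$ is continuous on the compact set $S^1$ as soon as it never equals $+\infty$, hence bounded, so $(\Gamma_4^\prime)\Rightarrow(\Gamma_4)$. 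For $(b)$, given $(\Gamma_2)$ with constant $\Delta$ and a secant line $\ell$ through $\gamma(u_1)\neq\gamma(u_2)$, I would pick $u_3$ with $\gamma(u_3)\notin\ell$ (possible because a closed immersed curve is not contained in any line, its tangent direction being a non-vanishing continuous function on $S^1$ confined to a line); the plane spanned by $\ell$ and $\gamma(u_3)$ contains at least three points of $\gamma$, so its slope is $\le\Delta$, and $S_\ell\le\Delta$ by Lemma \ref{a comparison of slopes}, which is $(\Gamma_3)$ with the same $\Delta$. For $(\Gamma_3)\Rightarrow(\Gamma_4)$, the tangent line at $u_0$ is the limit of the secant lines through $\gamma(u_0)$ and $\gamma(u)$ as $u\to u_0$, and slope is continuous on lines with values in $[0,+\infty]$, so tangent slopes are $\le\Delta$ as well, in particular finite.

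\emph{Part $(c)$.} Assuming $(\Gamma_1)$ and $(\Gamma_4)$, part $(a)$ shows $\gb$ is simple and (by $(\Gamma_4^\prime)$) immersed, hence a fixed smooth embedded closed planar curve; I would parametrize $\gamma=(\gb(\s),z(\s))$ by the arclength $\s$ of $\gb$. Then the slope of the tangent line at $\s$ equals $|z_\s|$, so $(\Gamma_4)$ reads $|z_\s|\le\Delta$, whence $|z(\s_1)-z(\s_2)|\le\Delta\, d(\s_1,\s_2)$ with $d$ the arclength distance along $\gb$. Since $\gb$ is embedded, $(\s_1,\s_2)\mapsto|\gb(\s_1)-\gb(\s_2)|/d(\s_1,\s_2)$ extends continuously to the diagonal with value $1$ and is positive off it, hence bounded below by some $c_0>0$ on the compact torus; therefore the slope of the secant line through $\gamma(\s_1)$ and $\gamma(\s_2)$ equals $|z(\s_1)-z(\s_2)|/|\gb(\s_1)-\gb(\s_2)|\le\Delta/c_0$, which is $(\Gamma_3)$.

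\emph{Part $(d)$.} Assume $(\Gamma_1)$, $(\Gamma_5)$ and $(\Gamma_3)$. By $(b)$ and $(a)$, $(\Gamma_3)$ forces $(\Gamma_4^\prime)$, so $\gamma$ has no vertical tangent and $\gb$ is an immersed strictly convex curve; let $\theta$ be its tangent angle, which is strictly increasing since a strictly convex curve contains no line segment. For a unit horizontal vector $n_0=(\cos\phi_0,\sin\phi_0,0)$ the critical points of $u\mapsto\gamma(u)\cdot n$ are the zeros of $T\cdot n$, and if $\gamma\cdot n$ has exactly two critical points then the plane $\{x\cdot n=c\}$ meets $\gamma$ in at most two points for every $c$. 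Now $T\cdot n_0=\sqrt{c}\,(\tb\cdot n_0)$ by (\ref{relation between partial s and partial s bar}), and $\tb\cdot n_0=\cos(\theta-\phi_0)$ has exactly two zeros because $\theta$ sweeps an interval of length $2\pi$; at such a zero $s^*$ one has $(T\cdot n_0)'(s^*)=k(s^*)\bigl(N(s^*)\cdot n_0\bigr)$, which is nonzero because $k(s^*)>0$ by $(\Gamma_5)$ and $N(s^*)\cdot n_0=0$ would make $n_0$ parallel to the binormal $T(s^*)\times N(s^*)$, forcing the osculating plane at $s^*$ to be vertical and contradicting the slope bound in $(\Gamma_5)$. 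So the two zeros of $T\cdot n_0$ are simple, and since $T\cdot n_0$ is bounded away from $0$ outside small neighbourhoods of them, the implicit function theorem shows $T\cdot n$ still has exactly two simple zeros for every unit $n$ close enough to $n_0$; then $\gamma\cdot n$ has exactly two critical points and $\{x\cdot n=c\}$ meets $\gamma$ in at most two points. A compactness argument over the circle of horizontal directions produces a single $\delta>0$ such that every plane whose unit normal lies within $\delta$ of horizontal meets $\gamma$ in at most two points, and then $(\Gamma_2)$ holds with $\Delta=\cot\delta$.

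I expect part $(d)$ to be the main obstacle: it is the only part that genuinely couples all three of its hypotheses — strict convexity of $\gb$ (to pin down that there are exactly two critical directions), positivity of $k$ together with the osculating-plane bound in $(\Gamma_5)$ (to make those critical points nondegenerate), and the absence of vertical tangents deduced from $(\Gamma_3)$ (without which a vertical tangent of $\gamma$ would be a spurious critical point of $\gamma\cdot n$ for every horizontal $n_0$ and could persist for near-vertical planes, destroying $(\Gamma_2)$) — and it requires the perturbation-plus-compactness step at the end. Parts $(a)$–$(c)$ should be routine once the appropriate small compactness estimate is isolated in each.
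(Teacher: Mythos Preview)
Your argument is correct. Parts $(a)$ and $(b)$ coincide with the paper's. For $(c)$ and $(d)$ you take a genuinely different route: the paper argues both by contradiction via sequential compactness. For $(c)$ it takes a sequence of secants with slope $\to+\infty$, passes to subsequential limits of the endpoint pairs, and contradicts $(\Gamma_1')$ if the limits are distinct (the limiting secant is vertical) or $(\Gamma_4)$ if they coincide (the limiting tangent is vertical). For $(d)$ it does the same with triples of points spanning planes of slope $\to+\infty$ and splits into three cases according to how the three limit points coincide: three distinct limits give a vertical plane meeting $\gamma$ thrice, against $(\Gamma_1)$; all three coinciding give a vertical osculating plane, against $(\Gamma_5)$; exactly two coinciding and one distinct give a plane containing a tangent line of $\gamma$ and a further curve point, which cannot be vertical by $(\Gamma_1')$ together with the tangent-slope bound coming from $(\Gamma_3)$. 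Your direct chord-arc estimate for $(c)$ and your critical-point perturbation for $(d)$ yield more explicit constants and make transparent where each hypothesis is used (in particular your observation that $N(s^*)\cdot n_0\neq 0$ is exactly where $(\Gamma_5)$ enters, paralleling the paper's ``all three coincide'' case); the paper's sequential-compactness approach is shorter and more uniform, handling $(c)$ and $(d)$ by the same device in a few lines each.
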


Particularly, it follows from Proposition \ref{comparison of different properties of space curves}$(b)$ that,
\begin{cor}
\label{three-point condition implies bounds on the slopes}
For a curve $\gamma\subset\mathbb R^3$ satisfying a three-point condition with a constant $\Delta\in\left[0,+\infty\right)$ and two points $q_1,q_2$ on $\gamma$, say $q_1=(x_1,y_1,z_1),q_2=(x_2,y_2,z_2)$, one has $(z_1-z_2)^2\leq\Delta^2\left[(x_1-x_2)^2+(y_1-y_2)^2\right]$.\\
\end{cor}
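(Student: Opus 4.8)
The plan is to read the inequality off from the chain of implications already established in Proposition \ref{comparison of different properties of space curves}$(b)$. Since $\gamma$ satisfies a three-point condition with constant $\Delta$, property $(\Gamma_2)$ holds, and part $(b)$ then yields property $(\Gamma_3)$ with the same constant: every secant line of $\gamma$ has slope at most $\Delta$.

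First I would dispose of the trivial case $q_1=q_2$, where both sides of the asserted inequality vanish. So assume $q_1\neq q_2$ and let $L$ be the line through $q_1$ and $q_2$; by definition $L$ is a secant line of $\gamma$, hence $S_L\leq\Delta<+\infty$. In particular $L$ is not vertical, so $(x_1,y_1)\neq(x_2,y_2)$ and the right-hand side of the claimed inequality is strictly positive --- this is where finiteness of $\Delta$ is used, and it is what makes the division in the next step legitimate.

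Next I would compute $S_L$ in coordinates. The line $L$ is parallel to $\bm{w}:=(x_1-x_2,\,y_1-y_2,\,z_1-z_2)$, and if $\Theta\in[0,\pi/2)$ denotes the nonnegative acute angle between $L$ and the $xy$-plane, then $\sin\Theta=|z_1-z_2|/|\bm{w}|$ and $\cos\Theta=|P_{xy}(\bm{w})|/|\bm{w}|=\sqrt{(x_1-x_2)^2+(y_1-y_2)^2}\,/\,|\bm{w}|$, so that
\[
S_L=\tan\Theta=\frac{|z_1-z_2|}{\sqrt{(x_1-x_2)^2+(y_1-y_2)^2}}\leq\Delta .
\]
Squaring gives exactly $(z_1-z_2)^2\leq\Delta^2\big[(x_1-x_2)^2+(y_1-y_2)^2\big]$. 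I do not anticipate a genuine obstacle here: all the content sits in Proposition \ref{comparison of different properties of space curves}$(b)$, which may be assumed; the only points requiring care are the degenerate case $q_1=q_2$ and the remark that a secant line of finite slope cannot be vertical, so the denominator above is nonzero.
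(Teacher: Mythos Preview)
Your argument is correct and matches the paper's approach: the paper states this corollary as an immediate consequence of Proposition \ref{comparison of different properties of space curves}$(b)$ (namely $(\Gamma_2)\Rightarrow(\Gamma_3)$ with the same $\Delta$) without spelling out any further details. You have simply unpacked the definition of the slope of a secant line and handled the degenerate case, which is exactly what is implicit in the paper.
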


\subsection*{Proof of Proposition \ref{comparison of different properties of space curves}}
\begin{proof}[Proof of Proposition \ref{comparison of different properties of space curves}$(a)$]
\leavevmode \par

$(\Gamma_1)\Rightarrow(\Gamma_1^\prime)$:
    
    If $P_{xy}|_\gamma$ is not injective, then there are two different points $q_1,q_2$ on $\gamma$, 
    such that $P_{xy}(q_1)=P_{xy}(q_2)$, pick a random point $q_3\neq q_1,q_2$ on $\gamma$, 
    and let $P$ be the plane passing through the points $q_1,q_2,q_3$. $P_{xy}(q_1)=P_{xy}(q_2)$ implies that $P$ is vertical. Contradiction.

    If $\Bar{\gamma}$ is not strictly convex. There is a line $L$ in the $xy$-plane, such that $|L\bigcap\Bar{\gamma}| \geq 3$.
    Notice $P:=P_{xy}^{-1}(L)$ is a vertical plane. Considering each $u\in S^1$, such that $\Bar{\gamma}(u)\in L$, we have
    $\gamma(u)\in P_{xy}^{-1}(\Bar{\gamma}(u))\subset P_{xy}^{-1}(L)=P$ and we already proved $P_{xy}|_\gamma$ is injective, thus $|P\bigcap\gamma|\geq|L\bigcap\Bar{\gamma}| \geq 3$. 
    Contradiction.

$(\Gamma_1)\Leftarrow(\Gamma_1^\prime)$:
If this were not true, then there would exist a vertical plane $P\subset\mathbb{R}^3$ with $|P\bigcap\gamma|\geq 3$. 
The projection $L:=P_{xy}(P)$ is a line in the $xy$-plane. For each $u\in S^1$ with $\gamma(u)\in P$, we have $\Bar{\gamma}(u)= P_{xy}(\gamma(u))\in P_{xy}(P)=L$. Since $P_{xy}|_\gamma$ is injective, it follows that $|L\bigcap\Bar{\gamma}|\geq|P\bigcap\gamma|\geq3$. Contradiction.

$(\Gamma_4)\Rightarrow(\Gamma_4^\prime)$: 

This is true since a line with bounded slope cannot be vertical.

$(\Gamma_4)\Leftarrow(\Gamma_4^\prime)$:

The condition $(\Gamma_4^\prime)$ implies that $x_s^2+y_s^2>0$. Since $S^1$ is compact, there exists a constant $\Delta\in[0,+\infty)$ such that $\frac{z_s^2}{x_s^2+y_s^2}\leq\Delta^2$. 
\end{proof}

\begin{proof}[Proof of Proposition \ref{comparison of different properties of space curves}$(b)$]\leavevmode \par

$(\Gamma_2)\Rightarrow(\Gamma_3)$:\\ 
If this were not true, then there would exist two different points $q_1,q_2$ on $\gamma$, with $S_{L_{q_1 q_2}}>\Delta$, where $L_{q_1 q_2}$ is the straight line passing through $q_1,q_2$.
Pick a different point $q_3\neq q_1,q_2$ on $\gamma$, and let $P$ be the plane passing through $q_1,q_2,q_3$.
By Lemma \ref{a comparison of slopes}, $S_P\geq S_{L_{q_1 q_2}}>\Delta$. Contradiction.
    
$(\Gamma_3)\Rightarrow(\Gamma_4)$: Tangent lines are limits of secant lines.    
\end{proof}

\begin{proof}[Proof of Proposition \ref{comparison of different properties of space curves}$(c)$]\leavevmode \par
Arguing by contradiction, we assume that there exist two sequences $q_{1n},q_{2n}$ on $\gamma$, such that $S_{L_{q_{1n} q_{2n}}}\rightarrow+\infty$
as $n\rightarrow+\infty$,where $L_{q_{1n} q_{2n}}$ is the straight line passing through $q_{1n},q_{2n}$. Taking subsequences without changing notation, we may assume there exist two points $q_{1\infty},q_{2\infty}$ on $\gamma$, such that 
$q_{1n}\rightarrow q_{1\infty},q_{2n}\rightarrow q_{2\infty}$. 
     
     If $q_{1\infty}\neq q_{2\infty}$, 
     then $S_{L_{q_{1\infty} q_{2\infty}}}=+\infty$, which contradicts $(\Gamma_1^\prime)$. 
     
     If $q_{1\infty}=q_{2\infty}$,
     then the slope of the tangent line at point $q_{1\infty}=q_{2\infty}$ is infinite, which contradicts $(\Gamma_4)$
\end{proof}

\begin{proof}[Proof of Proposition \ref{comparison of different properties of space curves}$(d)$]\leavevmode \par
If Proposition \ref{comparison of different properties of space curves}$(d)$ were false, then we would have three sequences $q_{1n},q_{2n},q_{3n}$ on $\gamma$ such that the slope $S_{P_n}\rightarrow+\infty$, where
$P_n$ is the plane passing through $q_{1n},q_{2n},q_{3n}$. 
Taking subsequences, we may assume there exist three points $q_{1\infty}$, $q_{2\infty}$, $q_{3\infty}$ on $\gamma$, such that $q_{1n}\rightarrow q_{1\infty},q_{2n}\rightarrow
q_{2\infty},q_{3n}\rightarrow q_{3\infty}$.
     
If $q_{1\infty},q_{2\infty},q_{3\infty}$ are three different points, then let $P$ be the plane passing through the points $q_{1\infty},q_{2\infty},q_{3\infty}$. The plane $P$ is vertical since $S_{P_n}\rightarrow+\infty$. This contradicts $(\Gamma_1)$ because the intersection number $|P\cap\gamma|\geq3$.
     
If $q_{1\infty}=q_{2\infty}=q_{3\infty}$, we get a contradiction with $(\Gamma_5)$ since 
     osculating planes can also be defined as the limits of planes passing through three points on the curve, where limits are taken
     as three points converge to one point. 
     
If $q_{1\infty}=q_{2\infty}\neq q_{3\infty}$, let $P$ be the plane containing the tangent line at $q_{1\infty}=q_{2\infty}$ and the point $q_{3\infty}$. 
Because of $(\Gamma_3)$ and the fact that tangent lines are limits of secant lines, the tangent line at $q_{1\infty}=q_{2\infty}$ has a bounded slope. Considering $(\Gamma_1^\prime)$, the plane $P$ is not vertical since it passes through another point $q_{3\infty}$ on the curve. This contradicts $S_{P_n}\rightarrow+\infty$.\\
\end{proof}

\subsection*{Proof of Lemma \ref{sufficient conditions for three-point condition}}
    The curve $\gamma\subset\mathbb{R}^3$ satisfies $(\Gamma_1^\prime)$, $(\Gamma_4^\prime)$ and the curvature of the projection curve is positive ($\kb>0$).

    By $(a)$ of Proposition \ref{comparison of different properties of space curves}, $\gamma$ satisfies $(\Gamma_1)$$(\Gamma_4)$.
        
    \begin{clm}
    \label{positive curvature}
        The curvature of the space curve $\gamma$ is positive($k>0$).
    \end{clm}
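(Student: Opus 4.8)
The plan is to obtain Claim~\ref{positive curvature} as an immediate consequence of Lemma~\ref{projection of the normal vector}, which applies here because $\gamma$ has no vertical tangent lines. I would argue by contradiction. Suppose $k(u_0)=0$ for some $u_0\in S^1$. Since the curvature vector of the space curve is $\gamma_{ss}$ and $k=|\gamma_{ss}|$, the assumption $k(u_0)=0$ forces $\gamma_{ss}(u_0)=0$; hence $P_{xy}(\gamma_{ss})(u_0)=0$, and in particular the scalar $P_{xy}(\gamma_{ss})(u_0)\cdot\gb_{\s\s}(u_0)$ vanishes.

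On the other hand, Lemma~\ref{projection of the normal vector} gives the pointwise identity $P_{xy}(\gamma_{ss})\cdot\gb_{\s\s}=(x_s^2+y_s^2)\kb^2$ on all of $S^1$. Evaluate this at $u_0$. Because $\gamma$ satisfies $(\Gamma_4^\prime)$, its tangent line at $u_0$ is not vertical, so $c(u_0)=(x_s^2+y_s^2)(u_0)>0$; and because the projection curve $\gb$ is uniformly convex, $\kb(u_0)>0$. Therefore $(x_s^2+y_s^2)(u_0)\,\kb^2(u_0)>0$, which contradicts the vanishing established in the previous paragraph. Hence $k>0$ everywhere on $S^1$, proving the claim.

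I do not anticipate a genuine obstacle: all the analytic content is already encoded in the identity of Lemma~\ref{projection of the normal vector}, whose geometric meaning is simply that the $xy$-component of the space curvature vector pairs nontrivially with the (nonvanishing) curvature vector of the uniformly convex projection, so it cannot be zero. The only points that require care are the two positivity inputs, $c>0$ and $\kb>0$, and both are guaranteed by the standing hypotheses of Lemma~\ref{sufficient conditions for three-point condition} together with $(\Gamma_4^\prime)$, so the argument is complete as stated.
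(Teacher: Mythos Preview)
Your proposal is correct and follows essentially the same approach as the paper: both use Lemma~\ref{projection of the normal vector} to observe that $P_{xy}(\gamma_{ss})\cdot\gb_{\s\s}=(x_s^2+y_s^2)\kb^2$ is strictly positive (from $(\Gamma_4)\Leftrightarrow(\Gamma_4^\prime)$ and $\kb>0$), forcing $\gamma_{ss}\neq0$. The only cosmetic difference is that you phrase it as a contradiction argument while the paper states it directly.
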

\begin{proof}[Proof of the claim]
The condition $(\Gamma_4)$ and $\kb>0$ imply that the right hand side of $P_{xy}(\gamma_{ss})\cdot\Bar{\gamma}_{\Bar{s}\Bar{s}}=(x_s^2+y_s^2)\Bar{k}^2$ (from Lemma \ref{projection of the normal vector}) is positive,
so that $k^2=|\gamma_{ss}|^2$ cannot be zero.
\end{proof}

    \begin{clm}
    \label{claim about gamma 5}
        The space curve $\gamma$ satisfies $(\Gamma_5)$.
    \end{clm}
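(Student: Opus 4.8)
The plan is to derive $(\Gamma_5)$ from the positivity of the curvature, already established in Claim \ref{positive curvature}, together with the uniform convexity of the projection curve ($\kb>0$). Since $(\Gamma_5)$ asks both for $k>0$ and for a uniform bound on the slopes of the osculating planes, only the slope bound remains, and I would obtain it by first showing that \emph{no} osculating plane of $\gamma$ is vertical and then invoking compactness of $S^1$.

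First I would describe the direction of the osculating plane. Because $k>0$ everywhere, $N=\gamma_{ss}/k$ is well defined, so the osculating plane $P(u)$ has direction $\mathrm{span}\{T(u),N(u)\}=\mathrm{span}\{\gamma_s(u),\gamma_{ss}(u)\}$. I then project the two spanning vectors to the $xy$-plane: equation (\ref{relation between partial s and partial s bar}) gives $P_{xy}(\gamma_s)=\sqrt{c}\,\tb$, and Lemma \ref{projection of the normal vector} together with the Frenet--Serret formulas (\ref{Frenet-Serret formulas of the projection curve}) gives $P_{xy}(\gamma_{ss})=c\,\gb_{\s\s}+\tfrac12 c_\s\,\gb_\s=c\kb\,\nb+\tfrac12 c_\s\,\tb$. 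Since $\gamma$ has no vertical tangent lines we have $c=x_s^2+y_s^2>0$, and $\kb>0$ by assumption; hence, expressed in the orthonormal basis $\{\tb,\nb\}$ of the $xy$-plane, the pair $P_{xy}(\gamma_s)=(\sqrt c,0)$ and $P_{xy}(\gamma_{ss})=(\tfrac12 c_\s,\,c\kb)$ has determinant $c^{3/2}\kb\neq0$. Thus these vectors span $\mathbb R^2$, so $P_{xy}$ carries the direction plane of $P(u)$ onto all of $\mathbb R^2$; as a vertical plane projects onto a line, $P(u)$ is not vertical, and this holds for every $u\in S^1$.

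To finish, note that $k>0$ and smoothness of $\gamma$ make $T$, $N$, and hence the binormal $B=T\times N$ — the unit normal of $P(u)$ — continuous in $u$. Writing $B=(B_1,B_2,B_3)$, the slope is $S_{P(u)}=\sqrt{B_1^2+B_2^2}/|B_3|$, and the previous paragraph shows $B_3\neq0$ for all $u$, so $u\mapsto S_{P(u)}$ is a continuous real-valued function on the compact set $S^1$. It therefore attains a finite maximum $\Delta:=\max_{u\in S^1}S_{P(u)}$, and $\gamma$ satisfies $(\Gamma_5)$ with this $\Delta$.

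I expect the substantive step to be the non-verticality of the osculating planes: the key is that $\kb>0$ and $c>0$ force the projection of the osculating plane to be two-dimensional rather than a line. Once this pointwise statement is in hand, the uniform bound is the routine observation that a continuous slope function on the compact curve, which never equals $+\infty$, must be bounded. A minor point to get right is the bookkeeping identifying $S_{P(u)}$ with $\sqrt{B_1^2+B_2^2}/|B_3|$ from the definition of the slope of a plane as the tangent of its acute dihedral angle with the $xy$-plane.
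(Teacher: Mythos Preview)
Your proof is correct and follows essentially the same route as the paper: both arguments show that no osculating plane is vertical by verifying that the $2\times2$ determinant of $P_{xy}(\gamma_s)$ and $P_{xy}(\gamma_{ss})$ is nonzero (the paper phrases this as $B\cdot(0,0,1)=\tfrac{1}{k}\bigl|\begin{smallmatrix} x_s & y_s\\ x_{ss} & y_{ss}\end{smallmatrix}\bigr|\neq0$, you compute the same quantity as $c^{3/2}\kb$ in the $\{\tb,\nb\}$ basis), using $c>0$ from $(\Gamma_4)$ and $\kb>0$ from uniform convexity, and then invoke compactness of $S^1$ to get a uniform slope bound.
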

\begin{proof}[Proof of the claim]
We already know $k>0$, the first part of $(\Gamma_5)$, so that the binormal vector $B(u)$ is well defined.
The osculating plane at $\gamma(u)$ is vertical iff $B(u)\cdot(0,0,1)=0$.
    
    By definition, $$B(u)\cdot(0,0,1)=(T(u)\times N(u))\cdot(0,0,1)=\frac{1}{k}\left|
    \begin{array}{ll}
          x_s & y_s \\
          x_{s s} & y_{s s}
     \end{array}\right|.$$
     
     Thus $B(u)\cdot(0,0,1)=0$ if and only if $ P_{xy}(\gamma_{ss})$  is parallel to $\Bar{T}$, the unit tangent vector of the projection curve.
     
Since $\Bar{T}\perp\Bar{N}$, the vector $\Bar{T} $ is parallel to $ P_{xy}(\gamma_{ss})$ iff $P_{xy}(\gamma_{ss})\perp\Bar{N}$ 
iff $P_{xy}(\gamma_{ss})\cdot\Bar{\gamma}_{\Bar{s}\Bar{s}}=0$
iff (by Lemma \ref{projection of the normal vector}) $x_s^2+y_s^2=0$.

In conclusion, the osculating plane at $\gamma(u)$ is vertical if and only if $x_s^2+y_s^2=0$, which doesn't hold due to $(\Gamma_4)$.
     Thus $\gamma$ has no vertical osculating planes.
     Considering that $S^1$ is compact, we have a finite upper bound for slope of osculating plane.
\end{proof}
    
    By $(c)$ of Proposition \ref{comparison of different properties of space curves}, $\gamma$ satisfies $(\Gamma_3)$.

    By $(d)$ of Proposition \ref{comparison of different properties of space curves} and our Claim \ref{claim about gamma 5}, $\gamma$ satisfies $(\Gamma_2)$.

\bigskip

\section{\texorpdfstring{Proof of Theorem \ref{main theorem}(b)\\ assuming the projection shrinks to a point}{Proof of Theorem \ref{main theorem}(b) assuming the projection shrinks to a point}}

In this section, let us prove our main theorem assuming the following:
\begin{prop}
\label{long time behaviour under three point condition}
       Under the conditions of Theorem \ref{main theorem}, the projection curve $\Bar{\gamma}$ shrinks to a point.
\end{prop}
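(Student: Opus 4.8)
The plan is to regard the projection curve as a flow of planar curves $\gb\att$, $t\in(0,T)$, governed by $\gb_t^\perp = c\,\kb\,\nb$ with $c = x_s^2 + y_s^2$ (Lemma \ref{evolution equation of the projection curve in the plane}); by Theorem \ref{main theorem}(a) each $\gb\att$ is smooth and uniformly convex, and $0 < c \le 1$ because $s$ is arc length of $\gamma$. The first step is to upgrade the pointwise positivity of $c$ to a \emph{uniform} lower bound: fixing $\epsilon > 0$, Lemma \ref{sufficient conditions for three-point condition} gives a three-point constant $\Delta_\epsilon < \infty$ for $\gamma(\cdot,\epsilon)$; since this constant is non-increasing along the flow (Proposition \ref{properties of space curves during evolution}) and since the condition $(\Gamma_4)$ of bounded tangent-line slopes is precisely the bound $c \ge (1+\Delta^2)^{-1}$ (in $\mathbb R^3$; in $\mathbb R^n$ after the projection to $\mathbb R^3$ of \S5), one gets $c(\cdot,t) \ge c_\epsilon := (1+\Delta_\epsilon^2)^{-1} > 0$ for all $t\in[\epsilon,T)$. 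Hence the projection flow is \emph{uniformly parabolic} on $[\epsilon,T)$.

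Two soft consequences come next. The enclosed area satisfies $\frac{d}{dt}\bar A = -\oint c\,d\theta \le -2\pi c_\epsilon$ (using $\oint d\theta = 2\pi$ for a convex curve), so $\bar A$ decays linearly and $T < \infty$; and a first-tangency argument shows $\gb\att$ stays inside the disk of radius $\rho(t) = \bigl(\rho_\epsilon^2 - 2c_\epsilon(t-\epsilon)\bigr)^{1/2}$, with $\rho_\epsilon$ the circumradius at time $\epsilon$, while its width in every direction is non-increasing. Since $\gb\att$ is a genuine uniformly convex curve for every $t < T$, it cannot vanish before the trapping disk does, which forces $T \le \epsilon + \rho_\epsilon^2/(2c_\epsilon)$; if equality holds we are finished, so the remaining task is to rule out the possibility that $\gb\att$ fails to collapse to a point as $t\to T$.

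This is where the barriers are needed. Write $\gb$ locally as a convex graph $y = f(x,t)$, which solves $f_t = c f_{xx}/(1+f_x^2)$, and compare $f$ with the grim-reaper-type barriers of \cite{grayson1989shortening} and \cite{mitake2024quenching} built in Lemma \ref{construction of the barrier}, chosen to be supersolutions simultaneously for every $c\in[c_\epsilon,1]$. From these I would extract: (i) the interior gradient bounds of Lemmas \ref{gradient bounds for x direction} and \ref{gradient bounds}, hence — with uniform parabolicity — bounds on $\kb$ and all higher derivatives away from any collapse, so that no corner can form; and (ii) Lemma \ref{not double line segment}, which excludes a line-segment limit, since a thin collapsing convex arc would force $\kb$ to blow up at its two tips in a way incompatible with a barrier of fixed opening angle. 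With (i)--(ii) in hand, the Hausdorff limit $K := \lim_{t\to T}\gb\att$ (which exists, the widths being monotone) is convex, has no corner, and is not a segment: either $\operatorname{int}K\neq\emptyset$, in which case (i) keeps the projection flow smooth up to and past $T$, and then each non-planar coordinate $z_i(\s,t)$ of $\gamma$ solves a uniformly parabolic equation on a convergent domain with data bounded by the three-point condition (Corollary \ref{three-point condition implies bounds on the slopes}), so that $\gamma\att$ extends smoothly past $T$, contradicting the maximality of $T$; or $K$ is a point, which is the claim.

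The main obstacle is part (ii): excluding the line-segment limit. For the homogeneous flow this follows from the monotonicity of Gage's isoperimetric ratio and ultimately from \cite{GageHamilton}, but here the coefficient $c$ is variable and coupled to the otherwise-uncontrolled ambient flow, which destroys that monotonicity; one must instead construct a barrier valid for all $c\in[c_\epsilon,1]$ and argue directly, localizing near the two would-be tips of a sliver, that a collapsing convex arc cannot stay under it. Making this comparison — and the accompanying gradient and curvature estimates — uniform in $c$ is the delicate point of \S7.
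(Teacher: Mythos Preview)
Your high-level architecture --- get a uniform lower bound $c\ge c_\epsilon>0$, then argue by contradiction that if the Hausdorff limit $K$ of $\gb\att$ is not a point the flow extends smoothly past $T$ --- matches the paper's. But you misidentify where the barriers act, and this is not a cosmetic difference: it is exactly the point that makes \S7 work.

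You propose to write $\gb$ as a graph $y=f(x,t)$ solving $f_t = c\,f_{xx}/(1+f_x^2)$ and compare $f$ with barriers ``valid for all $c\in[c_\epsilon,1]$'', and you flag this uniformity-in-$c$ as the delicate part. The paper never touches this equation. The barrier $\varphi(u,t)=\epsilon e^{-\lambda t}\sin(\pi x(u,t)/M)$ of Lemma~\ref{construction of the barrier} satisfies $\varphi_t-\varphi_{ss}\le0$ with $\partial_s$ the arc-length derivative of the \emph{ambient} curve $\gamma$ in $\mathbb R^n$. It is then compared, via the maximum principle, with (a) the coordinate $y$, which obeys the clean linear equation $y_t=y_{ss}$ (Lemma~\ref{not double line segment}), and (b) the quantity $x_s$, which obeys $x_{st}=x_{sss}+k^2x_s\ge x_{sss}$ (Lemma~\ref{gradient bounds for x direction}). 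Neither comparison involves $c$ at all; the coefficient simply does not appear. So the ``delicate point'' you anticipate is bypassed entirely, and your proposed construction of $c$-uniform grim-reaper barriers for the projected graph flow is unnecessary (and, as you sense, would be hard to carry out).

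Consequently your reading of Lemma~\ref{not double line segment} is also off. It does not argue that a collapsing sliver forces $\kb$ to blow up at the tips; it directly shows that if $D$ contains two points on the $x$-axis then the sine barrier forces $y_{\max}(T)\ge\epsilon e^{-\lambda T}>0$, so $D$ has nonempty interior --- ruling out the segment in one stroke. Likewise Lemmas~\ref{gradient bounds for x direction}--\ref{gradient bounds} give a lower bound on $|x_s|$ (hence an upper bound on $|\gamma_x|$) for the \emph{space} curve, which feeds into Proposition~\ref{curvature bounds} on the ambient graph flow $\mathbf r_t=\mathbf r_{xx}/(1+|\mathbf r_x|^2)$ to bound $k$; there is no need for your two-stage detour of first extending $\gb$ and then bootstrapping the $z_i$. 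Finally, note one subtlety the paper handles and you do not: when $\operatorname{int}K\neq\emptyset$ one must choose the graphing directions carefully (diameter endpoints $p_0,q_0$ of $D$, Lemma~\ref{p0q0}) so that graphs over two orthogonal axes, with margins $\delta_1,\delta_2$, actually cover all of $S^1$ --- this can fail for a generic choice of axes (see the Remark after Lemma~\ref{uniform curvature}).
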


We will prove Proposition \ref{long time behaviour under three point condition} in $\S7$.
    When $n=2$, our proof of Proposition \ref{long time behaviour under three point condition}
    gives a new proof of the first part of the Gage-Hamilton theorem, to the best of our knowledge.

We first consider the $n=3$ case and then deal with the higher-dimensional case by considering its projections onto subspaces of dimension three.

We will show the preservation of the three-point condition, which would help to control $\gamma$ via its projection $\gb$.
\subsection*{ \texorpdfstring{Case $n=3$}{Case n=3}}
It follows from the Sturmian theorem that,
\begin{lem}
\label{intersection number with a fixed plane}
For any fixed plane $P\subset\mathbb{R}^3$, the intersection number $|P\bigcap\gamma(\cdot,t)|:=\#\{u\in S^1|\gamma(u,t)\in P\}$ is a non-increasing function of time $t$.
\end{lem}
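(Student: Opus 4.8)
The plan is to reduce the statement to the Sturmian theorem of Angenent \cite{angenent1988zero}, exactly as was done for hyperplanes in the proof of Lemma \ref{simple zeros}. Let $P\subset\mathbb{R}^3$ be a fixed plane. Write $P=\{x\in\mathbb{R}^3 : \bm{v}\cdot x = c\}$ for some fixed nonzero normal vector $\bm{v}\in\mathbb{R}^3$ and constant $c\in\mathbb{R}$. Consider the scalar function $h:S^1\times[0,T)\to\mathbb{R}$ defined by $h(u,t):=\bm{v}\cdot\gamma(u,t)-c$. Then $u\in S^1$ satisfies $\gamma(u,t)\in P$ precisely when $h(u,t)=0$, so $|P\cap\gamma(\cdot,t)|=\#\{u\in S^1 : h(u,t)=0\}$.

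First I would check that $h$ satisfies a linear parabolic equation of the type covered by the Sturmian theorem. Since $\gamma_t=\gamma_{ss}$, differentiating $h$ in $t$ and using that $\bm{v}$ and $c$ are constant gives $h_t=\bm{v}\cdot\gamma_{ss}$. Now $\gamma_{ss}=\frac{1}{|\gamma_u|}\partial_u\!\left(\frac{1}{|\gamma_u|}\gamma_u\right)$, so $\bm{v}\cdot\gamma_{ss}=\frac{1}{|\gamma_u|}\partial_u\!\left(\frac{1}{|\gamma_u|}\,\bm{v}\cdot\gamma_u\right)=h_{ss}$, and writing this out in the fixed parameter $u$ yields $h_t = a(u,t)h_{uu}+b(u,t)h_u$ with $a=|\gamma_u|^{-2}>0$ and $b$ continuous for $t>0$; there is no zeroth-order term. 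This is precisely the form to which Angenent's results apply on the circle $S^1$.

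Then I would invoke \cite[Theorem~C, or Theorem~1]{angenent1988zero}: for a solution of such a one-dimensional linear parabolic equation on $S^1$, the number of zeros $\#\{u : h(u,t)=0\}$ is finite for each $t>0$ and is a non-increasing function of $t$; moreover zeros can only disappear, never be created. (Strictly, one should observe that either $h\equiv 0$ on some time slice — in which case $\gamma\att$ lies entirely in $P$ and the intersection "number" is best interpreted as $+\infty$, a degenerate case that either does not occur for immersed closed curves or is handled by a trivial remark — or $h$ is not identically zero and the zero-counting statement applies directly.) This immediately gives that $|P\cap\gamma(\cdot,t)|$ is non-increasing in $t$.

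The only mild subtlety, and the step I expect to require the most care, is the bookkeeping at $t=0$ versus $t>0$: the coefficients are only assumed continuous for $t>0$, so the monotonicity statement is really about $t\in(0,T)$, and comparison with $t=0$ uses upper semicontinuity of the zero count as $t\downarrow 0$ together with the hypothesis that $\gamma_0$ is smooth. Since in our applications (preservation of the three-point condition) we only compare positive times, this suffices; but I would state the lemma for $t>0$ and add one sentence noting that, because $\gamma_0$ is smooth and $\gamma$ is real-analytic for $t>0$ (or simply by the semicontinuity built into Angenent's theorem), the count at $t=0$ dominates the count for small $t>0$ as well.
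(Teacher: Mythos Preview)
Your proposal is correct and follows essentially the same approach as the paper: define the affine height function $h=\bm{v}\cdot\gamma-c$ associated to the plane, observe that $h_t=h_{ss}$ because $\gamma_t=\gamma_{ss}$, and apply Angenent's Sturmian theorem to conclude that the number of zeros of $h(\cdot,t)$ is non-increasing. Your additional remarks on the $t=0$ versus $t>0$ bookkeeping and the degenerate case $h\equiv 0$ are more detailed than what the paper writes, but the core argument is identical.
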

\begin{proof}
The equation of the plane $P$ is of the form $$ax+by+cz=d,$$ where $a,b,c,d\in \mathbb{R}$. 

Here $\gamma(u,t)=(x(u,t),y(u,t),z(u,t))$ satisfies the Curve Shortening flow $\gamma_t=\gamma_{ss}$.

Let us consider the function $f:=ax+by+cz-d$. We have $(f\circ\gamma)_t=(f\circ\gamma)_{ss}$ and notice that the zeros of the function $f\circ\gamma$ correspond to the intersection points.
This lemma follows from applying the Sturmian theorem to the equation $(f\circ\gamma)_t=(f\circ\gamma)_{ss}$.
\end{proof}

The preservation of the condition
$(\Gamma_1)\Leftrightarrow(\Gamma_1^\prime)$ was first discovered by H{\"a}ttenschweiler \cite[Lemma 3.56]{hättenschweiler2015curve} 
who used Lemma \ref{intersection number with a fixed plane} to prove this. To extend this argument further:
\begin{prop}
\label{properties of space curves during evolution}
    If $\gamma(\cdot,0)$ satisfies a three-point condition with some constant $\Delta\in\left[0,+\infty\right)$,
    then $\forall t\in\left[0,T\right)$, $\gamma\att$  satisfies the three-point condition
    with the same constant $\Delta$.
\end{prop}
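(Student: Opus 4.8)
The plan is to reduce the statement to Lemma \ref{intersection number with a fixed plane}, which says that for any fixed plane the intersection number with $\gamma(\cdot,t)$ is non-increasing in $t$. First I would unwind the definition of the three-point condition: the hypothesis that $\gamma(\cdot,0)$ satisfies the three-point condition with constant $\Delta$ means that every plane $P$ with $S_P>\Delta$ satisfies $|P\cap\gamma(\cdot,0)|\le 2$ (this is exactly the reformulation recorded in the proof of the lemma $(\Gamma_2)\Rightarrow(\Gamma_1)$). The goal is to show the same holds for $\gamma(\cdot,t)$ at every later time, i.e.\ that every plane of slope exceeding $\Delta$ meets $\gamma(\cdot,t)$ in at most two points.

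The key step is then a direct application of monotonicity: fix any time $t\in(0,T)$ and any plane $P$ with $S_P>\Delta$. By Lemma \ref{intersection number with a fixed plane}, $|P\cap\gamma(\cdot,t)|\le |P\cap\gamma(\cdot,0)|$, and the right-hand side is $\le 2$ because $\gamma(\cdot,0)$ satisfies the three-point condition with constant $\Delta$. Hence $|P\cap\gamma(\cdot,t)|\le 2$ for every such plane $P$, which is precisely the statement that $\gamma(\cdot,t)$ satisfies the three-point condition with the same constant $\Delta$. Since $P$ was arbitrary among planes of slope $>\Delta$, and $t$ arbitrary in $[0,T)$, the proposition follows.

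The only mild subtlety — and the place where I would be slightly careful — is whether Lemma \ref{intersection number with a fixed plane} is genuinely available at $t=0$ and for \emph{all} planes, including degenerate configurations where $\gamma(\cdot,t)$ is tangent to $P$ or lies partly in $P$; but the Sturmian-theorem argument in the proof of that lemma counts zeros of the affine function $f\circ\gamma$ along the flow $(f\circ\gamma)_t=(f\circ\gamma)_{ss}$, and the zero-counting is monotone regardless. (If $f\circ\gamma\equiv 0$ then the plane contains the whole curve and the count is infinite at all times, consistent with monotonicity; this case is excluded anyway since $\gamma(\cdot,0)$ has finite intersection number with every plane of slope $>\Delta$.) So there is no real obstacle: the proposition is essentially a one-line consequence of the Sturmian monotonicity once the three-point condition is rephrased as a uniform bound on intersection numbers over the family of planes with slope exceeding $\Delta$. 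I would write the proof accordingly, spending most of the words on the rephrasing and then invoking Lemma \ref{intersection number with a fixed plane}.
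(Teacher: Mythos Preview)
Your proposal is correct and follows essentially the same route as the paper: rephrase the three-point condition as $|P\cap\gamma|\le 2$ for every plane with $S_P>\Delta$, and then apply Lemma \ref{intersection number with a fixed plane} to conclude that this bound persists for all $t\in[0,T)$. The paper's proof is the same one-line reduction, so there is nothing to add.
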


\begin{proof}[Proof of Proposition \ref{properties of space curves during evolution}]
The definition of the three-point condition is equivalent to requiring $|P\bigcap\gamma|\leq2$ for all $P$ with $S_P>\Delta$. 
So this proposition follows from Lemma \ref{intersection number with a fixed plane}.
\end{proof}

Based on Proposition \ref{long time behaviour under three point condition}, for the case $n=3$ we can now show:
\begin{proof}[Proof of  part $(b)$ of Theorem \ref{main theorem}]
By $(a)$ of Theorem \ref{main theorem} and Lemma \ref{sufficient conditions for three-point condition}, $\fapt$, $\gamma\att$ satisfies a three-point condition. 
In particular, if we fix one $\epsilon\in\left(0,T\right)$, then there exists a constant $\Delta\in\left[0,+\infty\right)$ such that $\gamma(\cdot,\epsilon)$ satisfies the three-point condition with constant $\Delta$.
Proposition \ref{properties of space curves during evolution} then implies that $\gamma\att$ satisfies the three-point condition with the same constant $\Delta$ for all $t\in\left[\epsilon,T\right)$.
    
By Corollary \ref{three-point condition implies bounds on the slopes}, for arbitrary two points $q_1,q_2$ on $\gamma\att$, say $q_1=(x_1,y_1,z_1),q_2=(x_2,y_2,z_2)$, one has
\begin{equation}
\label{height get controlled by the horizontal projection}
(z_1-z_2)^2\leq\Delta^2\left[(x_1-x_2)^2+(y_1-y_2)^2\right]    
\end{equation}
for the same constant $\Delta$ for all $t\in\left[\epsilon,T\right)$.
Thus 
\begin{equation}
\label{diameter control}
\text{diam }\left(\gamma\att\right)\leq\sqrt{1+\Delta^2}\text{diam }\left(\gb\att\right),
\end{equation}
where diam $(\cdot)$ denotes the diameter of the curve.

Proposition \ref{long time behaviour under three point condition} says that diam $\left(\gb\att\right)\rightarrow0$ as $t\rightarrow T$. Therefore equation (\ref{diameter control}) implies that diam $\left(\gamma\att\right)\rightarrow0$ as $t\rightarrow T$.
\end{proof}

\subsection*{Case \texorpdfstring{$n \geq 3$}{n >= 3}}
In this subsection, we modify our argument for higher dimensional space.

For a fixed integer $i\in[1,n-2]$, consider the projection:

$$P_{xyz_{i}}:\mathbb{R}^n\rightarrow\mathbb{R}^{3}, P_{xyz_{i}}(x,y,z_{1},\cdots,z_{n-2})=(x,y,z_i)$$

And the projection curve in the $xyz_i$-space:    
$$\Bar{\Bar{\gamma}}:=P_{xyz_{i}}\circ\gamma$$

We will use $\Bar{\Bar{k}},\Bar{\Bar{T}},\Bar{\Bar{N}},\Bar{\Bar{B}},\Bar{\Bar{s}}$ as associated notation for the projection curve $\Bar{\Bar{\gamma}}$.

Notice that the restricted map $P_{xyz_{i}}|_{\gamma}:$ $\gamma\rightarrow\mathbb{R}^3$ is injective
since  $P_{xy}|_{\gamma}$ is injective.\\

A modified Lemma \ref{projection of the normal vector} holds:
\begin{lem}
\label{projection of the normal vector, n dim case}
    Let $\gamma:S^1\rightarrow\mathbb{R}^n$ be an immersed curve with $x_s^2+y_s^2+z_{is}^2>0$, we have $P_{xyz_i}(\gamma_{ss})\cdot\Bar{\Bar{\gamma}}_{\Bar{\Bar{s}}\Bar{\Bar{s}}}=(x_s^2+y_s^2+z_{is}^2)\Bar{\Bar{k}}^2$ and $P_{xyz_i}(\gamma_{ss})\cdot\Bar{\Bar{B}}=0$.
\end{lem}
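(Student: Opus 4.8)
The statement to be proved is Lemma \ref{projection of the normal vector, n dim case}, which is the verbatim analogue of Lemma \ref{projection of the normal vector} with the plane $\mathbb R^2$ replaced by the three-dimensional subspace $\mathbb R^3 = \mathbb R^2\times\mathbb R_{z_i}$. The plan is to mimic the proof of Lemma \ref{projection of the normal vector} line by line. First I would observe that the hypothesis $x_s^2+y_s^2+z_{is}^2>0$ says exactly that $\gbb = P_{xyz_i}\circ\gamma$ is an immersed space curve, so that its arc-length parameter $\sbb$ exists and is well defined, with $\partial_s = |\gbb_s|\,\partial_{\sbb} = \sqrt{x_s^2+y_s^2+z_{is}^2}\,\partial_{\sbb}$ (the exact analogue of equation (\ref{relation between partial s and partial s bar})). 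Since $P_{xyz_i}$ is a linear map it commutes with $\partial_s$ (which is just a scalar multiple of $\partial_u$), so writing $P := P_{xyz_i}$ one gets
\begin{equation*}
P(\gamma_{ss}) = P\!\left(\sqrt{x_s^2+y_s^2+z_{is}^2}\,\Bigl(\sqrt{x_s^2+y_s^2+z_{is}^2}\,\gamma_{\sbb}\Bigr)_{\sbb}\right) = \sqrt{x_s^2+y_s^2+z_{is}^2}\,\Bigl(\sqrt{x_s^2+y_s^2+z_{is}^2}\,\gbb_{\sbb}\Bigr)_{\sbb},
\end{equation*}
which after expanding the product rule equals $c'\,\gbb_{\sbb\sbb} + \tfrac12 c'_{\sbb}\,\gbb_{\sbb}$ with $c' := x_s^2+y_s^2+z_{is}^2$. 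This is the analogue of equation (\ref{x_ss in terms of sbar}).

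\textbf{Deducing the two identities.} Now I would dot this expression against $\gbb_{\sbb\sbb} = \Bar{\Bar k}\,\Bar{\Bar N}$. Since $\gbb_{\sbb} = \Bar{\Bar T}$ is a unit vector, differentiating $\gbb_{\sbb}\cdot\gbb_{\sbb}=1$ gives $\gbb_{\sbb}\cdot\gbb_{\sbb\sbb}=0$, so the second term drops out and we get $P(\gamma_{ss})\cdot\gbb_{\sbb\sbb} = c'\,|\gbb_{\sbb\sbb}|^2 = c'\Bar{\Bar k}^2 = (x_s^2+y_s^2+z_{is}^2)\Bar{\Bar k}^2$, the first claimed identity. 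For the second, $\Bar{\Bar B}$ is by definition orthogonal to both $\Bar{\Bar T} = \gbb_{\sbb}$ and $\Bar{\Bar N}$, hence orthogonal to $\mathrm{span}\{\gbb_{\sbb}, \gbb_{\sbb\sbb}\}$; since $P(\gamma_{ss})$ is a linear combination of exactly these two vectors, $P(\gamma_{ss})\cdot\Bar{\Bar B}=0$ follows immediately.

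\textbf{Remark on the binormal.} One small subtlety is that $\Bar{\Bar B}$ requires $\Bar{\Bar k}>0$ to be defined; but in every place this lemma is invoked (e.g. Claim \ref{claim about gamma 5} and its higher-dimensional counterpart) one is already assuming positivity of the relevant curvature, so this is not a genuine obstacle — I would simply note that the second identity is asserted under the standing assumption that $\Bar{\Bar B}$ is defined, i.e. $\Bar{\Bar k}>0$, or alternatively phrase it as $P(\gamma_{ss})\perp \mathrm{span}\{\Bar{\Bar T},\Bar{\Bar N}\}^{\perp}$.

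\textbf{Main obstacle.} There is essentially no obstacle: the only thing to check is that linearity of $P_{xyz_i}$ lets it pass through $\partial_{\sbb}$ in the same way $P_{xy}$ did in Lemma \ref{projection of the normal vector}, which is transparent since $\partial_{\sbb}$ differs from $\partial_u$ only by multiplication by the scalar $|\gbb_u|^{-1}$. The proof is a routine transcription, and I would write it in two or three lines mirroring the proof of Lemma \ref{projection of the normal vector}.
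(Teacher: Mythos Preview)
Your proposal is correct and follows essentially the same approach as the paper: both express $P_{xyz_i}(\gamma_{ss})$ as a linear combination of $\gbb_{\sbb}$ and $\gbb_{\sbb\sbb}$ via the chain rule $\partial_s = \sqrt{x_s^2+y_s^2+z_{is}^2}\,\partial_{\sbb}$, then use $\gbb_{\sbb}\perp\gbb_{\sbb\sbb}$ for the first identity and the fact that $P_{xyz_i}(\gamma_{ss})\in\mathrm{span}\{\Bar{\Bar T},\Bar{\Bar N}\}$ for the second. The only cosmetic difference is that the paper writes out the computation in coordinates while you work directly with vectors.
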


\begin{proof}
Noticing that the arc-length parameter $s$ of the space curve $\gamma$ parametrizes the projection curve $\gbb$, by definition of the arc-length parameter $\bar{\Bar{s}}$ of $\gbb$,
    
$$ \partial_s=\sqrt{x_s^2+y_s^2+z_{is}^2} \partial_{\Bar{\Bar{s}}}.$$
So    $$x_{ss}=\sqrt{x_s^2+y_s^2+z_{is}^2} (\sqrt{x_s^2+y_s^2+z_{is}^2} x_{\Bar{\Bar{s}}})_{\Bar{\Bar{s}}}.$$

    Since $(x_{\Bar{\bar{s}}},y_{\Bar{\bar{s}}},z_{i\Bar{\bar{s}}})\cdot(x_{\Bar{\Bar{s}}\Bar{\Bar{s}}},y_{\Bar{\Bar{s}}\Bar{\Bar{s}}},z_{i\Bar{\Bar{s}}\Bar{\Bar{s}}})=0$
    and $\Bar{\Bar{k}}^2=x_{\Bar{\Bar{s}}\Bar{\Bar{s}}}^2+y_{\Bar{\Bar{s}}\Bar{\Bar{s}}}^2+z_{i\Bar{\Bar{s}}\Bar{\Bar{s}}}^2$,
    $$P_{xyz_i}(\gamma_{ss})\cdot\Bar{\Bar{\gamma}}_{\Bar{\Bar{s}}\Bar{\Bar{s}}}=(x_{ss},y_{ss},z_{iss})\cdot(x_{\Bar{\Bar{s}}\Bar{\Bar{s}}},y_{\Bar{\Bar{s}}\Bar{\Bar{s}}},z_{i\Bar{\Bar{s}}\Bar{\Bar{s}}})=0+(x_s^2+y_s^2+z_{is}^2)\Bar{\Bar{k}}^2.$$
    So the first part of this lemma is proved.
    
    For the second part that, notice $P_{xyz_i}(\gamma_{ss})$ can be written as linear combination of $\Bar{\Bar{T}},\Bar{\Bar{N}}$.
\end{proof}

Consider the following coefficient:
$$c_i=c_i(s,t):=x_s^2+y_s^2+z_{is}^2$$

We can also modify Lemma \ref{evolution equation of the projection curve in the plane}. By Lemma \ref{projection of the normal vector, n dim case},
\begin{lem}[Evolution of $\overline{\overline{\gamma}}$]
\label{evolution of the projection curve in the 3 dim space}
    Consider Curve Shortening flow in $\mathbb{R}^n$,  $\gamma:S^1\times \left[0,T\right)\rightarrow\mathbb{R}^n$,
     assume $x_s^2+y_s^2+z_{is}^2>0$, We have:
    \begin{equation}
    \label{evolution equation of the projection curve in the 3 dim space}
       \Bar{\Bar{\gamma}}_t-\Bar{\Bar{\gamma}}_t^{\top}=(x_s^2+y_s^2+z_{is}^2)\Bar{\Bar{k}}\Bar{\Bar{N}}=c_i\Bar{\Bar{k}}\Bar{\Bar{N}},
    \end{equation} 
    where $\Bar{\Bar{\gamma}}_t^{\top}$ is the tangential part of $\Bar{\Bar{\gamma}}_t$.\\
\end{lem}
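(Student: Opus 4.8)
The plan is to mimic exactly the proof of Lemma \ref{evolution equation of the projection curve in the plane}, replacing the two-coordinate projection $P_{xy}$ by the three-coordinate projection $P_{xyz_i}$ and using the modified normal-projection identity from Lemma \ref{projection of the normal vector, n dim case} in place of Lemma \ref{projection of the normal vector}. The key point is that projection commutes with the $t$-derivative and that $\gamma$ solves CSF, so $\overline{\overline{\gamma}}_t = P_{xyz_i}(\gamma)_t = P_{xyz_i}(\gamma_t) = P_{xyz_i}(\gamma_{ss})$; it then remains to decompose $P_{xyz_i}(\gamma_{ss})$ into its components tangent and normal to the curve $\overline{\overline{\gamma}}$ in $\mathbb{R}^3$.

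First I would note that, exactly as in the two-dimensional computation, since $\partial_s = \sqrt{x_s^2+y_s^2+z_{is}^2}\,\partial_{\bar{\bar s}} = \sqrt{c_i}\,\partial_{\bar{\bar s}}$, we obtain
$$P_{xyz_i}(\gamma_{ss}) = c_i\,\overline{\overline{\gamma}}_{\bar{\bar s}\bar{\bar s}} + \tfrac{1}{2}(c_i)_{\bar{\bar s}}\,\overline{\overline{\gamma}}_{\bar{\bar s}}.$$
The second term is a multiple of $\overline{\overline{T}}$, hence tangential, while the first term equals $c_i\,\overline{\overline{k}}\,\overline{\overline{N}}$ by the Frenet relation $\overline{\overline{\gamma}}_{\bar{\bar s}\bar{\bar s}} = \overline{\overline{k}}\,\overline{\overline{N}}$. (Equivalently, one invokes the identity $P_{xyz_i}(\gamma_{ss})\cdot \overline{\overline{\gamma}}_{\bar{\bar s}\bar{\bar s}} = c_i\,\overline{\overline{k}}^2$ together with $P_{xyz_i}(\gamma_{ss})\cdot\overline{\overline{B}} = 0$ from Lemma \ref{projection of the normal vector, n dim case}, which pins down the normal component of $P_{xyz_i}(\gamma_{ss})$ as $c_i\overline{\overline{k}}\,\overline{\overline{N}}$.) Subtracting the tangential part $\overline{\overline{\gamma}}_t^{\top}$ from $\overline{\overline{\gamma}}_t = P_{xyz_i}(\gamma_{ss})$ therefore leaves precisely $c_i\,\overline{\overline{k}}\,\overline{\overline{N}}$, which is the claimed identity.

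The only genuine subtlety is that the normal space of $\overline{\overline{\gamma}}$ in $\mathbb{R}^3$ is two-dimensional, so a priori $\overline{\overline{\gamma}}_t - \overline{\overline{\gamma}}_t^{\top}$ could have a component along $\overline{\overline{B}}$. This is exactly what the second assertion $P_{xyz_i}(\gamma_{ss})\cdot\overline{\overline{B}} = 0$ of Lemma \ref{projection of the normal vector, n dim case} rules out, since $P_{xyz_i}(\gamma_{ss})$ lies in the span of $\overline{\overline{T}}$ and $\overline{\overline{N}}$. Thus there is no real obstacle; the content is entirely in having set up Lemma \ref{projection of the normal vector, n dim case} correctly, and this lemma is a one-line consequence of it. I would also remark that the hypothesis $x_s^2+y_s^2+z_{is}^2 > 0$ is used throughout to guarantee $\overline{\overline{\gamma}}$ is immersed so that $\overline{\overline{T}}$, $\overline{\overline{N}}$, $\overline{\overline{B}}$ and $\overline{\overline{k}}$ are defined.
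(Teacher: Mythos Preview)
Your proposal is correct and follows exactly the paper's approach: the paper simply states that the lemma holds ``By Lemma \ref{projection of the normal vector, n dim case}'' with no further argument, and your write-up spells out precisely that deduction (projection commutes with $\partial_t$, $\gamma_t=\gamma_{ss}$, then decompose $P_{xyz_i}(\gamma_{ss})$ using the preceding lemma). Your observation about the binormal component being the only new subtlety compared to the planar case is apt and is exactly why the second assertion of Lemma \ref{projection of the normal vector, n dim case} is stated.
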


Up to tangential motion, for the evolution of $\gbb$ (Lemma \ref{evolution of the projection curve in the 3 dim space}), we may consider the following flow in $\mathbb{R}^3$:
\begin{equation}
\label{evolution of projection curves in three dimension space}
\Bar{\Bar{\gamma}}_{t^{\prime\prime}}=(x_s^2+y_s^2+z_{is}^2)\Bar{\Bar{k}}\Bar{\Bar{N}}=c_i\Bar{\Bar{k}}\Bar{\Bar{N}},
\end{equation}
where $\frac{\partial}{\partial t^{\prime\prime}}=\frac{\partial}{\partial t}-\frac{1}{2}\frac{c_{is}}{c_i}\frac{\partial}{\partial s}$.

It follows from the Sturmian theorem that,
\begin{lem}
\label{intersection number of projection curves with a fixed plane}
    Assume $c_i>0$, for any fixed plane $P\subset\mathbb{R}^3$, 
    the intersection number $|P\bigcap\gbb(\cdot,t)|$ is a non-increasing function of time $t$.
\end{lem}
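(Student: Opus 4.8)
The plan is to reduce this statement to the Sturmian theorem of \cite{angenent1988zero}, exactly as in the proof of Lemma \ref{intersection number with a fixed plane}, by observing that an intersection of $\gbb$ with a plane in $\mathbb{R}^3$ is the same datum as an intersection of $\gamma$ with a hyperplane in $\mathbb{R}^n$. Concretely, write the given plane as $P=\{(\xi,\eta,\zeta)\in\mathbb R^3: a\xi+b\eta+c\zeta=d\}$, where the third coordinate is the $z_i$-coordinate, and let $\vec W\in\mathbb R^n$ be the vector whose first two entries are $a$ and $b$, whose entry in the $z_i$-slot is $c$, and whose remaining entries vanish. Then for all $u$ and $t$ one has $ax+by+cz_i=\vec W\cdot\gamma$, so $\gbb(u,t)\in P$ if and only if $\vec W\cdot\gamma(u,t)=d$; equivalently, $\gbb(u,t)\in P$ iff $\gamma(u,t)\in P_{xyz_i}^{-1}(P)$, a hyperplane in $\mathbb R^n$. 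Hence $|P\cap\gbb(\cdot,t)|$ equals the number of zeros on $S^1$ of the scalar function $f:=\vec W\cdot\gamma-d$.

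Next I would write down the evolution equation for $f$. Since $\gamma$ solves $\gamma_t=\gamma_{ss}$ and $\vec W,d$ are constant, $f_t=\vec W\cdot\gamma_t=\vec W\cdot\gamma_{ss}=(\vec W\cdot\gamma-d)_{ss}=f_{ss}$. Rewriting $\partial_s=|\gamma_u|^{-1}\partial_u$, this is a linear, uniformly parabolic equation for $f(u,t)$ whose coefficients are smooth for $t>0$ by parabolic smoothing for CSF, so the Sturmian theorem applies to it: the sign-changing number, and hence the number of zeros, of $f(\cdot,t)$ is non-increasing in $t$. Since that number is precisely $|P\cap\gbb(\cdot,t)|$, the lemma follows. (One could also argue downstairs using the reparametrized flow \eqref{evolution of projection curves in three dimension space}: there $\phi:=\vec w\cdot\gbb-d$, with $\vec w=(a,b,c)$, satisfies $\phi_{t^{\prime\prime}}=c_i\,\phi_{\bar{\bar s}\bar{\bar s}}$ because $\bar{\bar k}\bar{\bar N}=\gbb_{\bar{\bar s}\bar{\bar s}}$; converting the $t^{\prime\prime}$- and $\bar{\bar s}$-derivatives into $t$- and $u$-derivatives via $\partial_{t^{\prime\prime}}=\partial_t-\tfrac12\tfrac{c_{is}}{c_i}\partial_s$ and $\partial_{\bar{\bar s}}=|\gbb_u|^{-1}\partial_u$ yields a uniformly parabolic equation with leading coefficient $c_i/|\gbb_u|^2>0$. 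This is the only place the hypothesis $c_i>0$ is genuinely needed downstairs; it also guarantees that $\gbb$ is an immersed curve for $t>0$, since $|\gbb_u|^2=|\gamma_u|^2c_i>0$, so that "$|P\cap\gbb|$" is meaningful.)

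I do not anticipate a substantive obstacle: the statement is the $\mathbb R^n$ analogue of Lemma \ref{intersection number with a fixed plane}, and the points requiring care are only bookkeeping ones — correctly identifying the lifted functional $\vec W$ so that $P$ pulls back to a hyperplane in $\mathbb R^n$; checking that the count of zeros of $f$ coincides with the geometric intersection count (immediate from \cite{angenent1988zero}, since for $t>0$ the zero set is finite and the zeros are generically simple); and verifying the regularity and parabolicity hypotheses needed to invoke the Sturmian theorem, which follow from the smoothing of CSF together with $c_i>0$. This lemma will then feed, in the higher-dimensional case, into the same argument that proved Proposition \ref{properties of space curves during evolution}, giving preservation of a three-point condition for $\gbb$ under the flow \eqref{evolution of projection curves in three dimension space}.
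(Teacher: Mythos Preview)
Your proposal is correct. Your primary approach differs from the paper's: the paper works ``downstairs'' with the projected flow \eqref{evolution of projection curves in three dimension space}, defining $f=ax+by+cz_i-d$ on $\gbb$ and computing $(f\circ\gbb)_{t^{\prime\prime}}=c_i(f\circ\gbb)_{\sbb\sbb}$, which is exactly the argument you sketch parenthetically. Your main argument instead lifts $P$ to the hyperplane $P_{xyz_i}^{-1}(P)\subset\mathbb R^n$ and applies the Sturmian theorem directly to $\vec W\cdot\gamma-d$ under the original CSF $\gamma_t=\gamma_{ss}$. This upstairs route is slightly cleaner: the equation $f_t=f_{ss}$ is parabolic simply because $\gamma$ is immersed, so, as you correctly note, the hypothesis $c_i>0$ is not actually needed for the conclusion (it is only needed to make the downstairs equation parabolic and to ensure $\gbb$ is immersed). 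The paper's approach, on the other hand, keeps the discussion intrinsic to the three-dimensional projected flow, which is consistent with how the surrounding section is organized. Either way the content is the same, and your parenthetical remark already matches the paper's proof.
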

\begin{proof}
The equation of the plane $P$ would be of the form $ax+by+cz_i=d$, where $a,b,c,d\in \mathbb{R}.$\\
Here $\gbb=(x,y,z_i)$ satisfies the evolution equation (\ref{evolution of projection curves in three dimension space}).\\
Let us consider the function $f:=ax+by+cz_i-d$.\\
It holds that $(f\circ\gbb)_{t^{\prime\prime}}=c_i(f\circ\gbb)_{\sbb\sbb}$ and notice that zeros of $f\circ\gbb$ correspond to intersection points.
This lemma then follows from the Sturmian theorem.
\end{proof}

Finally, for the general dimension,
\begin{proof}[Proof of Theorem \ref{main theorem} $(b)$ based on Proposition \ref{long time behaviour under three point condition}]
By Theorem \ref{main theorem}$(a)$, $c_i\att$ $\geq x_s^2+y_s^2=c\att>0$, $\fapt$.
    
    By Lemma \ref{sufficient conditions for three-point condition},
    $\fapt,\gbb\att$ satisfies the three-point condition. Particularly, fix one $\epsilon\in\left(0,T\right)$, there exists a constant $\Delta\in\left[0,+\infty\right)$ such that
    $\gbb(\cdot,\epsilon)$ satisfies the three-point condition with constant $\Delta$.
    Considering Lemma \ref{intersection number of projection curves with a fixed plane},
    $\forall t\in\left[\epsilon,T\right)$, $\gbb\att$ satisfies the three-point condition with the same constant $\Delta$.
    
By Corollary \ref{three-point condition implies bounds on the slopes}, for arbitrary two points $q_1,q_2$ on $\gbb\att$, say $q_1=(x_1,y_1,z_{i1})$, $q_2=(x_2,y_2,z_{i2})$, one has that $$(z_{i1}-z_{i2})^2\leq\Delta^2\left[(x_1-x_2)^2+(y_1-y_2)^2\right]$$
for the same constant $\Delta$ for all $t\in\left[\epsilon,T\right)$.
Right hand side of the inequality goes to zero by Proposition \ref{long time behaviour under three point condition}.

     This argument works for all $i$. So $\gamma$ shrinks to a point.
\end{proof}

\begin{cor}
\label{three-point condition in n dim}
Under conditions of Theorem \ref{main theorem}, $\forall\epsilon>0$, there exists $M>0$ such that for each $t\in\left[\epsilon,T\right)$ and for arbitrary two points $q_1,q_2$ on $\gamma\att$, say $q_1=(x_1,y_1,z_{11},\cdots,z_{(n-2)1})$, $q_2=(x_2,y_2,z_{12},\cdots,z_{(n-2)2})$, one has that
$$\sum_{i=1}^{n-2}(z_{i1}-z_{i2})^2\leq M^2\left[(x_1-x_2)^2+(y_1-y_2)^2\right].$$
\end{cor}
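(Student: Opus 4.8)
The plan is to deduce the $n$-dimensional inequality directly from the three-dimensional one established in the proof of Theorem \ref{main theorem}$(b)$, applied separately to each of the $n-2$ coordinate slices. First I would fix $\epsilon>0$. By Theorem \ref{main theorem}$(a)$ we have $c(\cdot,t)>0$ on $[\epsilon,T)$, hence $c_i\att=x_s^2+y_s^2+z_{is}^2\geq c\att>0$ for every $i\in\{1,\dots,n-2\}$. Therefore Lemma \ref{sufficient conditions for three-point condition} applies to each projected curve $\gbb=P_{xyz_i}\circ\gamma$ at time $\epsilon$: each $\gbb(\cdot,\epsilon)$ satisfies a three-point condition with some constant $\Delta_i\in[0,+\infty)$. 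By Lemma \ref{intersection number of projection curves with a fixed plane} (valid because $c_i>0$), this three-point condition with constant $\Delta_i$ is preserved for all $t\in[\epsilon,T)$.

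Next, for each $i$ and each $t\in[\epsilon,T)$, Corollary \ref{three-point condition implies bounds on the slopes} applied to $\gbb\att$ gives, for any two points $q_1,q_2$ on $\gamma\att$ with coordinates as in the statement,
\begin{equation*}
(z_{i1}-z_{i2})^2\leq \Delta_i^2\left[(x_1-x_2)^2+(y_1-y_2)^2\right].
\end{equation*}
Summing over $i=1,\dots,n-2$ and setting $M^2:=\sum_{i=1}^{n-2}\Delta_i^2$ (so $M$ depends only on $\epsilon$, not on $t$ or on the chosen points), we obtain
\begin{equation*}
\sum_{i=1}^{n-2}(z_{i1}-z_{i2})^2\leq M^2\left[(x_1-x_2)^2+(y_1-y_2)^2\right],
\end{equation*}
which is the asserted estimate.

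There is essentially no obstacle here: the corollary is a bookkeeping consequence of machinery already assembled, and the only point requiring a moment's care is that the constants $\Delta_i$ must be chosen at the single time $\epsilon$ and then held fixed, with preservation supplied by Lemma \ref{intersection number of projection curves with a fixed plane}; uniformity in $t$ over $[\epsilon,T)$ then follows. One could alternatively phrase the whole argument through the single statement that $\gamma\att$ satisfies, for $t\in[\epsilon,T)$, a uniform bound on the slopes of all its secant lines (condition $(\Gamma_3)$ with a constant independent of $t$), which immediately yields $\sum_i (z_{i1}-z_{i2})^2\le M^2[(x_1-x_2)^2+(y_1-y_2)^2]$ with $M$ that slope bound; but the slice-by-slice route above needs only results explicitly stated in the excerpt.
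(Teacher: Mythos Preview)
Your argument is correct and coincides with the paper's approach: the corollary is stated without separate proof because it is exactly the coordinate-wise three-point-condition bound $(z_{i1}-z_{i2})^2\le\Delta_i^2[(x_1-x_2)^2+(y_1-y_2)^2]$ obtained in the proof of Theorem \ref{main theorem}$(b)$ for each $i$, summed over $i=1,\dots,n-2$. Your care in fixing the $\Delta_i$ at time $\epsilon$ and invoking Lemma \ref{intersection number of projection curves with a fixed plane} for preservation is precisely what the paper does there.
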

\begin{cor}
\label{lower bounds of c}
    Under conditions of Theorem \ref{main theorem}, $\forall\epsilon>0$, there exists $\delta>0$
    such that $c\att=x_s^2+y_s^2\geq\delta>0$ for $t\in\left[\epsilon,T\right)$.
\end{cor}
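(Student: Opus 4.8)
The plan is to reduce the statement to a lower bound on $c = x_s^2 + y_s^2$, the second-order coefficient appearing in the evolution equation (\ref{evolution equation of k bar}) for $\bar k$, and to extract such a bound from the evolution of the projection curve's arclength element. First I would fix $\epsilon > 0$ and recall from Theorem \ref{main theorem}$(a)$ that on $[\epsilon, T)$ the projection curve $\bar\gamma(\cdot,t)$ is immersed, uniformly convex, and that $c(\cdot,t) > 0$ pointwise (Lemma \ref{explanation that the coefficient c is positive}). The quantity $c$ is exactly $|\bar\gamma_s|^2$: since the space-curve arclength parameter $s$ parametrizes $\bar\gamma$, we have $|\bar\gamma_s|^2 = x_s^2 + y_s^2 = c$ by (\ref{relation between partial s and partial s bar}). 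Thus what must be ruled out is that $|\bar\gamma_s|$ degenerates to zero somewhere, i.e.\ that $\gamma$ develops a vertical tangent line as $t$ ranges over the \emph{compact} subintervals of $(0,T)$; but Lemma \ref{no tangent lines perpendicular to the horizontal plane} already forbids this for every individual $t \in (0,T)$, so the only issue is uniformity in $t$.

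The key step is therefore a compactness/continuity argument combined with a quantitative differential inequality. Working in the commuting parametrization $v$ from equation (\ref{time derivative of square of gamma bar u}) (where $(|\bar\gamma_v|^2)_{t'} = -2c\bar k^2 |\bar\gamma_v|^2$), I would instead track the space-curve arclength element. Let $\rho := |\gamma_u|$; a standard Curve Shortening computation gives $(\rho)_t = -k^2 \rho$, so $|\gamma_u(\cdot,t)|$ is controlled above and below on $[\epsilon, T-\epsilon]$ in terms of $|\gamma_u(\cdot,\epsilon)|$ and $\sup_{[\epsilon,T-\epsilon]} k^2$, which is finite because $\gamma$ is smooth on $(0,T)$. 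Then $c = x_s^2 + y_s^2 = |P_{xy}(\gamma_u)|^2 / |\gamma_u|^2$, and by Lemma \ref{no tangent lines perpendicular to the horizontal plane} the numerator $|P_{xy}(\gamma_u)(\cdot,t)|^2$ is strictly positive for each $t$; since $(u,t) \mapsto |P_{xy}(\gamma_u(u,t))|^2 / |\gamma_u(u,t)|^2$ is continuous and positive on the compact set $S^1 \times [\epsilon, T-\epsilon]$, it attains a positive minimum $\delta_\epsilon > 0$ there. Iterating this over an exhaustion of $[\epsilon, T)$ — or rather observing directly that once we have a lower bound on $[\epsilon, T-\epsilon]$ we only need to handle the final piece — the remaining difficulty is extending the bound all the way up to $T$. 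Here I would invoke Corollary \ref{three-point condition in n dim}: the three-point condition gives, for $t \in [\epsilon, T)$, a uniform bound $\sum_i z_{is}^2 \leq M^2 (x_s^2 + y_s^2) = M^2 c$ on tangent-line slopes, hence $|\gamma_s|^2 = 1 = (x_s^2+y_s^2) + \sum_i z_{is}^2 \leq (1+M^2) c$, which immediately yields $c \geq (1+M^2)^{-1} =: \delta > 0$ for \emph{all} $t \in [\epsilon, T)$. (Indeed this last observation alone proves the corollary, once Corollary \ref{three-point condition in n dim} is in hand, and the compactness argument above is only needed to get the initial three-point constant at time $\epsilon$.)

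The cleanest route is thus: (1) by Theorem \ref{main theorem}$(a)$ and Lemma \ref{sufficient conditions for three-point condition}, $\gamma(\cdot,\epsilon)$ satisfies a three-point condition with some $\Delta_\epsilon \in [0,\infty)$; (2) by Lemma \ref{intersection number with a fixed plane} (or Lemma \ref{intersection number of projection curves with a fixed plane} in higher dimensions, applied to each $z_i$), this three-point condition is preserved with the \emph{same} constant for all $t \in [\epsilon, T)$; (3) Corollary \ref{three-point condition in n dim} then gives a uniform slope bound $\sum_i z_{is}^2 \leq M^2 (x_s^2 + y_s^2)$ on $[\epsilon, T)$ with $M$ depending only on $\Delta_\epsilon$ and $n$; (4) since $|T|^2 = 1$ decomposes as $(x_s^2 + y_s^2) + \sum_i z_{is}^2$, we get $1 \leq (1+M^2)(x_s^2+y_s^2)$, i.e.\ $c = x_s^2 + y_s^2 \geq \frac{1}{1+M^2}$, so $\delta := (1+M^2)^{-1}$ works. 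The main obstacle — making sure the slope bound does not deteriorate as $t \to T$ — is handled precisely by the monotonicity of intersection numbers under the Sturmian theorem in step (2), which propagates the constant obtained at the single time $\epsilon$ to the whole interval without loss.
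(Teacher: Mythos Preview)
Your proposal is correct and your ``cleanest route'' is essentially identical to the paper's proof: both use the preserved three-point condition on $[\epsilon,T)$ to get a uniform tangent-slope bound $z_{is}^2\le\Delta_i^2(x_s^2+y_s^2)$ and then read off $c\ge\bigl(1+\sum_i\Delta_i^2\bigr)^{-1}$ from $|T|^2=1$. The only cosmetic difference is that the paper cites Proposition~\ref{comparison of different properties of space curves}$(b)$ (namely $(\Gamma_2)\Rightarrow(\Gamma_4)$) directly for the tangent-slope bound, whereas you route through Corollary~\ref{three-point condition in n dim} (a secant bound) and pass to the limit; your preliminary compactness discussion is, as you yourself note, unnecessary.
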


\begin{proof}
By Proposition \ref{comparison of different properties of space curves}$(b)$, particularly $(\Gamma_4)$,
$z_{is}^2\leq \Delta_i^2 (x_s^2+y_s^2)$ for some constants $\Delta_i\in[0,+\infty)$ for $t\in\left[\epsilon,T\right)$.

    Notice $1=x_s^2+y_s^2+z_{1s}^2+\cdots+z_{(n-2)s}^2\leq (x_s^2+y_s^2)(1+\Delta_1^2+\cdots\Delta_{(n-2)}^2)$.

One can then choose $\delta=\frac{1}{1+\Delta_1^2+\cdots\Delta_{(n-2)}^2}$.
\end{proof}

\begin{rmk}
This corollary implies that the coefficient $c=x_s^2+y_s^2$ of the evolution equation of the projection curve $\gb$ (equation (\ref{evolution of the projection curve up to tangential motion})),  is bounded from above and below by positive constants.
\end{rmk}

\section{Some facts about the Space Curve Shortening flow}
In this section, we gather the lemmas that will be used.

\subsection*{Long-time existence}
The following lemma follows from \cite[Theorem 1.13]{AltschulerGrayson} and alternatively from estimates
in \cite[Theorem 3.1]{Altschuler}. One may
refer to \cite{yang2005curve} for the $n\geq4$ case. 
\begin{lem}
\label{bounded curvature can extend flow}
If the curvature $k$ is bounded on the time interval $\left[0,t_1\right)$, then there exists an $\epsilon>0$ 
such that $\gamma(\cdot,t)$ exists and is immersed on the extended time interval $\left[0,t_1+\epsilon\right)$.\\
\end{lem}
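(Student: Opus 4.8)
The statement to prove is Lemma~\ref{bounded curvature can extend flow}: if the curvature $k$ is bounded on $[0,t_1)$, then the flow extends past $t_1$.

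\textbf{Overall approach.} The plan is the standard bootstrap argument for geometric parabolic flows, in the form worked out by Altschuler--Grayson. The key point is that a bound on $k=|\gamma_{ss}|$ alone does not immediately bound higher derivatives, but it does control the evolution enough to run a parabolic regularity bootstrap. First I would fix a parametrization issue: under the flow $\gamma_t=\gamma_{ss}$ the parameter $u$ need not stay a good parameter, so I would either work with $\partial_s$-derivatives directly (which is what \cite{AltschulerGrayson} do) or reparametrize so that $|\gamma_u|$ stays bounded above and below. Using Lemma~\ref{commute of operators}, one computes that $|\gamma_u|$ satisfies $\partial_t |\gamma_u| = -k^2|\gamma_u|$ along the flow (up to the chosen tangential reparametrization), so a curvature bound on $[0,t_1)$ gives two-sided bounds on $|\gamma_u|$, hence the curve stays immersed with a uniformly good parameter.

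\textbf{Key steps in order.} (1) Derive the evolution equation for $|\vec k|^2 = k^2$; it has the schematic form $\partial_t k^2 = \Delta_s k^2 - 2|\partial_s \vec k|^2 + (\text{lower order in } k)$, so with $k$ bounded this is a parabolic inequality with bounded coefficients. (2) Differentiate repeatedly: show by induction that $\partial_t |\partial_s^m \vec k|^2 \le \Delta_s |\partial_s^m \vec k|^2 + C_m(1 + |\partial_s^m \vec k|^2)$ using the already-established bounds on $|\partial_s^j \vec k|$ for $j<m$ and interpolation (Gagliardo--Nirenberg) inequalities on $S^1$ to absorb the borderline terms. (3) Apply the maximum principle (or integral energy estimates) at each stage to get, for every $m$ and every $\tau\in(0,t_1)$, a uniform bound $\sup_{[\tau,t_1)}|\partial_s^m\vec k| \le C_{m,\tau}$. (Near $t=0$ we already have smoothness of $\gamma_0$, so the bounds hold up to $t=0$ too, or one just starts from any $\tau>0$.) (4) Conclude: uniform $C^\infty$ bounds on $[\,\tau,t_1)$ together with $\gamma_t=\gamma_{ss}$ bounded mean $\gamma(\cdot,t)$ converges in $C^\infty$ to a smooth immersed limit curve $\gamma(\cdot,t_1)$ as $t\to t_1$. (5) Apply short-time existence (the parabolic PDE $\gamma_t=\gamma_{ss}$ has a smooth solution on $[t_1,t_1+\epsilon)$ starting from the smooth immersed curve $\gamma(\cdot,t_1)$) and glue, using parabolic regularity to see the glued solution is smooth across $t=t_1$.

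\textbf{Main obstacle.} The delicate part is step (2)--(3): closing the bootstrap. A pointwise bound on $k$ does not by itself control $\partial_s k$, so one must either use the structure of the reaction terms together with the good negative term $-2|\partial_s^{m+1}\vec k|^2$ in the evolution of $|\partial_s^m\vec k|^2$, or use integral/energy estimates plus Sobolev embedding on the circle. In higher codimension there is an additional subtlety pointed out by Altschuler: $\vec k$ is a vector and the Frenet frame may degenerate at inflection points, so the estimates must be phrased in terms of $\vec k=\gamma_{ss}$ and its arc-length derivatives $\partial_s^m\vec k$ rather than the scalar curvature and Frenet quantities. I would therefore not reprove this from scratch; the cleanest route is to cite \cite[Theorem 1.13]{AltschulerGrayson} (and \cite[Theorem 3.1]{Altschuler}, \cite{yang2005curve} for $n\ge 4$), where exactly this bootstrap is carried out, and simply note that the hypothesis ``$k$ bounded on $[0,t_1)$'' is what feeds their argument.
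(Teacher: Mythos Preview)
Your proposal is correct and aligns with the paper: the paper does not give an independent proof of this lemma but simply cites \cite[Theorem 1.13]{AltschulerGrayson}, \cite[Theorem 3.1]{Altschuler}, and \cite{yang2005curve} for $n\geq 4$, exactly as you do in your final paragraph. Your outline of the bootstrap argument behind those references is accurate but goes beyond what the paper itself provides.
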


\subsection*{Finite-time singularities}
   The next lemma follows from the proof of \cite[Lemma 3.1]{altschuler2013zoo}. See also the proof of \cite[Theorem 4.1]{he2012distance}. 
\begin{lem}
Let $B_{R}(0)$ be the ball centered at the origin with radius $R$. If $\gamma(\cdot,0)\subset B_{R}(0)$, then $\gamma(\cdot,t)\subset B_{\sqrt{R^2-2t}}(0)$. 
\end{lem}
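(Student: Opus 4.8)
The plan is to use a containment/avoidance argument against round shrinking spheres. First I would observe that the curve $\gamma(\cdot,0)$ is compact, hence contained in some closed ball $B_R(0)$ after translating coordinates so the ball is centered at the origin; by replacing $R$ with a slightly larger radius we may assume $\gamma(\cdot,0)$ lies in the open ball $B_R(0)$. The target inequality is that for every $t$ in the existence interval, $\gamma(\cdot,t) \subset B_{\sqrt{R^2-2t}}(0)$.

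The key step is a comparison between the moving curve and the shrinking sphere $\partial B_{\rho(t)}(0)$ with $\rho(t) = \sqrt{R^2-2t}$, which is exactly the evolution of the radius of a sphere moving by its mean curvature in the ambient sense relevant here. Concretely, I would study the scalar function $f(u,t) := |\gamma(u,t)|^2$ and compute its evolution under $\gamma_t = \gamma_{ss}$. Using $\partial_t |\gamma|^2 = 2\gamma\cdot\gamma_t = 2\gamma\cdot\gamma_{ss}$ and $\partial_s^2 |\gamma|^2 = 2|\gamma_s|^2 + 2\gamma\cdot\gamma_{ss} = 2 + 2\gamma\cdot\gamma_{ss}$ (since $|\gamma_s|=1$ in arc-length), one gets
\begin{equation}
\left(|\gamma|^2\right)_t = \left(|\gamma|^2\right)_{ss} - 2.
\end{equation}
Meanwhile the function $g(t) := \rho(t)^2 = R^2 - 2t$ satisfies $g_t = -2 = g_{ss}$ (it is spatially constant), so $f - g$ satisfies the scalar parabolic equation $(f-g)_t = (f-g)_{ss}$ on $S^1 \times [0,t_1)$ with initial data $f(\cdot,0) - g(0) = |\gamma_0|^2 - R^2 < 0$. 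By the parabolic maximum principle on the compact manifold $S^1$ (no boundary, so no boundary terms), $f - g < 0$ is preserved as long as the flow exists, which is exactly $|\gamma(u,t)|^2 < R^2 - 2t$, i.e. $\gamma(\cdot,t) \subset B_{\sqrt{R^2-2t}}(0)$.

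The main subtlety — not really an obstacle but the point requiring care — is the regularity needed to apply the maximum principle: $f$ is only asserted to be $C^2$ in $u$ and $C^1$ in $t$ for $t>0$, and continuous up to $t=0$, but this is precisely the standing regularity hypothesis in the "Curve Shortening Calculus" subsection and is enough for the scalar maximum principle. One should also note that the estimate is vacuous once $R^2 - 2t \le 0$, consistent with the well-known fact that the flow cannot exist past $t = R^2/2$; indeed the containment combined with the fact that a closed curve has positive diameter forces $T \le R^2/2$, though that consequence is not part of the statement. No appeal to the higher-codimension structure or to projection convexity is needed here; the argument is identical to the classical one for curves and hypersurfaces.
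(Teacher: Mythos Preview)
Your proposal is correct and is essentially the same argument as the paper's: compute $(|\gamma|^2)_t = (|\gamma|^2)_{ss} - 2$, equivalently $(|\gamma|^2 + 2t)_t = (|\gamma|^2 + 2t)_{ss}$, and apply the scalar maximum principle on $S^1$. Your framing via the shrinking sphere $\rho(t)^2 = R^2 - 2t$ and the extra remarks on regularity and the bound $T \le R^2/2$ are fine elaborations, but the core computation and conclusion match the paper exactly.
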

\begin{proof}
By direct computation, $$\left(|\gamma|^2\right)_t=2\gamma\gamma_t=2\gamma\gamma_{ss}
=2(\gamma\gamma_{s})_s-2=\left(|\gamma|^2\right)_{ss}-2.$$

So we have $$\left(|\gamma|^2+2t\right)_t=\left(|\gamma|^2+2t\right)_{ss}.$$

The conclusion follows from the maximum principle.
\end{proof}
   \begin{cor}
   \label{singularity time is finite}
       Under the Space Curve Shortening flow, closed curves develop singularities in finite time.\\
   \end{cor}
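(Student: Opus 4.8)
The plan is to read this off directly from the confinement estimate established in the preceding lemma, together with the extension criterion in Lemma \ref{bounded curvature can extend flow}. Let $\gamma:S^1\times[0,T)\to\mathbb{R}^n$ be the maximal solution of Space Curve Shortening with initial data a closed immersed curve $\gamma_0$. First I would observe that $\gamma_0(S^1)$ is compact, so after a translation of $\mathbb{R}^n$ we may assume $\gamma_0(S^1)\subset B_R(0)$ for some $R>0$. Applying the preceding lemma then gives $\gamma(S^1,t)\subset B_{\sqrt{R^2-2t}}(0)$ for all $t\in[0,T)$.

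Next I would note that the right-hand side only makes sense, and the ball is nonempty, for $t<R^2/2$, and that at $t=R^2/2$ the ``ball'' degenerates to the single point $\{0\}$. Since a closed immersed curve has positive length and hence positive diameter, $\gamma(\cdot,t)$ cannot be contained in a ball of radius $0$. Therefore the maximal existence time must satisfy $T\le R^2/2<\infty$.

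Finally I would upgrade ``the flow stops at a finite time'' to ``a singularity forms'': if the curvature $k$ were bounded on $[0,T)$, then Lemma \ref{bounded curvature can extend flow} would produce an extension of $\gamma$ as an immersed solution past $T$, contradicting the maximality of $T$. Hence $\sup_{S^1\times[0,T)}k=+\infty$, i.e.\ a curvature singularity develops at the finite time $T\le R^2/2$, which is the assertion of the corollary.

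There is essentially no obstacle here; the only points that need a word of care are the initial recentering so that the confinement lemma applies (or, equivalently, just using boundedness of the compact set $\gamma_0(S^1)$), and the remark that an immersed closed curve cannot degenerate to a point while remaining smooth — this is what converts the shrinking-ball statement into the finiteness of $T$, after which the standard extension criterion finishes the proof.
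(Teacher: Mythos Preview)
Your proof is correct and follows exactly the approach the paper intends: the corollary is stated without a written proof, as an immediate consequence of the preceding confinement lemma, and your argument supplies precisely the expected details. The additional step invoking Lemma~\ref{bounded curvature can extend flow} to conclude that the curvature blows up is a reasonable strengthening, though the paper's phrasing ``develop singularities in finite time'' is already satisfied by $T\le R^2/2<\infty$.
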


\subsection*{The limit curve}
For the proof of the next lemma, we refer readers to the proof of \cite[Lemma 3.21]{hättenschweiler2015curve}.
\begin{lem}
\label{uniform convergence}  
As $t\rightarrow T$, $\gamma(\cdot,t)$ converges to a Lipschitz curve denoted by $\gamma(\cdot,T)$,
in the sense of $C^\alpha, 0\leq\alpha<1$. Particularly,
\begin{equation}
    \lim_{t\rightarrow T}\max_{u\in S^1}|\gamma(u,t)-\gamma(u,T)|=0.
\end{equation}
\end{lem}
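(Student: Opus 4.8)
The statement to prove is Lemma~\ref{uniform convergence}: as $t\to T$, the curves $\gamma(\cdot,t)$ converge uniformly (indeed in $C^\alpha$ for $0\le\alpha<1$) to a Lipschitz limit curve $\gamma(\cdot,T)$.

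\medskip

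\textbf{Plan of proof.} The plan is to exploit the basic structure of the equation $\gamma_t=\gamma_{ss}$, namely that arc-length on each curve shrinks, to get a uniform $L^1$-in-time bound on the velocity $|\gamma_t|$ at each fixed Lagrangian point, and then upgrade this to uniform convergence via completeness of the space of continuous maps $S^1\to\mathbb R^n$. First I would recall that under CSF the total length $L(t)=\int_{S^1}|\gamma_u(u,t)|\,du$ satisfies $\frac{dL}{dt}=-\int k^2\,ds\le 0$, so $L(t)$ is non-increasing and hence bounded; together with Corollary~\ref{singularity time is finite} this gives $T<\infty$ and $\int_0^T\!\!\int_{S^1}k^2\,ds\,dt\le L(0)<\infty$. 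Next, to control pointwise motion I would work in a parametrization in which $\partial_t\gamma$ is purely normal, i.e. reparametrize so that $\gamma_t=\gamma_{ss}=\vec k$ with the parameter $u$ moved only by tangential sliding absorbed; then at a fixed parameter value $|\gamma_t(u,t)|=|\vec k(u,t)|=k(u,t)$, and I would estimate $\max_u|\gamma(u,t_2)-\gamma(u,t_1)|\le\int_{t_1}^{t_2}\max_u k(\cdot,t)\,dt$. The key quantitative input is a bound on $\int_{t_1}^{t_2}\max_u k\,dt$ that tends to $0$ as $t_1,t_2\to T$; I would obtain this from the curvature evolution equation $\partial_t k\le \partial_{ss}k + k^3 + (\text{lower order})$ combined with a Gagliardo--Nirenberg / interpolation estimate $\max_u k^2 \le C(L)\big(\int k_s^2\,ds + (\int k^2\,ds)^{\,p}\big)$ and the finiteness of $\int_0^T\!\!\int k^2\,ds\,dt$, so that $\max_u k^2$ is integrable in time near $T$, which by Cauchy--Schwarz gives $\int_{t_1}^T\max_u k\,dt\to 0$.

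\medskip

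Granting this, the family $\{\gamma(\cdot,t)\}_{t<T}$ is Cauchy in $C^0(S^1,\mathbb R^n)$ as $t\to T$, hence converges uniformly to a continuous limit $\gamma(\cdot,T)$, establishing the displayed equation. For the Lipschitz (in the spatial variable) and $C^\alpha$ statements, I would note that $|\gamma_u(u,t)|\le|\gamma_u(u,0)|\cdot\exp(\int_0^t\! (\text{bound involving }k^2)\,)$ is false in general for all time, so instead I would use the weaker fact that $|\gamma(u,t)-\gamma(u',t)|\le \min\{\,L(t)/2,\ |\gamma_u|\text{-arclength distance}\,\}$, together with equicontinuity inherited from the uniform-in-$t$ bound on arc-length density once we pass to the normal parametrization, giving a uniform Lipschitz constant for $\gamma(\cdot,t)$ in $u$; passing to the limit preserves this, so $\gamma(\cdot,T)$ is Lipschitz, and interpolating uniform $C^0$ convergence with uniform $C^{0,1}$ bounds yields $C^\alpha$ convergence for every $\alpha<1$ by the standard interpolation inequality $\|f\|_{C^\alpha}\le C\|f\|_{C^0}^{1-\alpha}\|f\|_{C^{0,1}}^{\alpha}$.

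\medskip

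\textbf{Main obstacle.} The delicate point is the curvature interpolation estimate and the extraction of time-integrability of $\max_u k$ near the singular time $T$ from the sole bound $\int_0^T\!\!\int_{S^1}k^2\,ds\,dt<\infty$; this requires the full evolution inequality for $\int k^2\,ds$ (which involves $-\int k_s^2\,ds$ plus cubic terms controlled by interpolation) and some care because the parametrization used to make $\gamma_t$ normal must remain an immersion, for which one invokes Lemma~\ref{bounded curvature can extend flow}-type reasoning on subintervals. Since the statement is exactly \cite[Lemma 3.21]{hättenschweiler2015curve}, in the write-up I would either reproduce this interpolation argument in detail or simply cite that reference for the analytic core and present only the reparametrization and Cauchy-sequence packaging here.
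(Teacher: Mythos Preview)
The paper itself gives no proof of this lemma and simply refers to \cite[Lemma 3.21]{hättenschweiler2015curve}; your fallback of citing that reference therefore coincides exactly with what the paper does.  Your attempt at an independent argument, however, contains a concrete error and a separate confusion, so let me point those out.

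\textbf{The interpolation step fails.}  You claim that from $\int_0^T\!\!\int k^2\,ds\,dt<\infty$, the curvature evolution, and Gagliardo--Nirenberg one obtains that $\max_u k^2$ is integrable in time near $T$, and then deduce $\int_{t_1}^T\max_u k\,dt\to0$ by Cauchy--Schwarz.  The intermediate assertion is already false for the round shrinking circle in $\mathbb R^2$: there $k(t)=(2(T-t))^{-1/2}$, so $\max_u k^2=(2(T-t))^{-1}$ and $\int_0^T\max_u k^2\,dt=+\infty$.  The terms you dismiss as ``lower order'' in the curvature evolution are the cubic ones, and after integration they produce $\int k^4\,ds$, which you cannot absorb: trying to bound it by interpolation feeds back exactly the $\int k_s^2\,ds$ you are trying to estimate, and the loop does not close.  (Note that for the circle $\int_0^T\max_u k\,dt$ \emph{is} finite, so your ultimate goal is not absurd; it is the route via square-integrability of $\max k$ that is blocked.)

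\textbf{The Lipschitz bound is immediate.}  You write that an estimate of the form $|\gamma_u(u,t)|\le|\gamma_u(u,0)|\exp(\cdots)$ ``is false in general for all time'' and then look for a substitute.  In fact the exact identity
\[
|\gamma_u(u,t)|=|\gamma_u(u,0)|\exp\!\left(-\int_0^t k^2(u,\tau)\,d\tau\right)
\]
(which follows from $\partial_t|\gamma_u|=-k^2|\gamma_u|$) shows that $|\gamma_u(u,t)|$ is pointwise non-increasing in $t$, so $|\gamma_u(u,t)|\le|\gamma_u(u,0)|\le C_0$ uniformly.  This single line gives the uniform spatial Lipschitz bound you need, with no reparametrization and no appeal to ``arc-length density''.

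\textbf{What actually works.}  With the pointwise monotonicity of $|\gamma_u|$ in hand, one combines the uniform Lipschitz bound with the Cauchy--Schwarz estimate
\[
|\gamma(u,t_2)-\gamma(u,t_1)|\le\int_{t_1}^{t_2}k(u,t)\,dt\le(t_2-t_1)^{1/2}\Bigl(\ln\tfrac{|\gamma_u(u,t_1)|}{|\gamma_u(u,t_2)|}\Bigr)^{1/2},
\]
which ties the time-oscillation directly to the drop of $|\gamma_u|$.  An Arzel\`a--Ascoli / averaging argument then yields a unique uniform limit; this is the line taken in \cite{hättenschweiler2015curve}.  Once uniform $C^0$ convergence and the uniform Lipschitz bound are in place, your interpolation to $C^\alpha$ for $\alpha<1$ is correct.
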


\subsection*{Curvature bounds}
Based on the established theory of the a priori estimates of parabolic PDEs, curvature bounds can be derived from gradient bounds:
\begin{prop}
\label{curvature bounds}
Let $\mathbf{r}=\mathbf{r}(x,t)=(y(x,t),z_1(x,t),\cdots,z_{(n-2)}(x,t))$  be a solution to the graph flow
\begin{equation}
\label{the equation of the graph flow}
\mathbf{r}_t=\frac{\mathbf{r}_{xx}}{1+\left|\mathbf{r}_x\right|^2} 
\end{equation}
 on $\left[0,M\right]\times\left[0,T\right)$.
Assume that $|\mathbf{r}_x|\leq M_1<+\infty$ on $\left[\delta,M-\delta\right]\times\left[0,T\right)$ for some constant $\delta>0$,
then the curvature $k$ is uniformly bounded on $\left[2\delta,M-2\delta\right]\times\left[0,T\right)$.
\end{prop}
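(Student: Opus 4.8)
The plan is to treat the graph flow (\ref{the equation of the graph flow}) as a quasilinear parabolic system and run the standard interior a priori estimate machinery of Krylov--Safonov / Ladyzhenskaya--Solonnikov--Uraltseva, localized with a cutoff. First I would fix the interior subrectangle and introduce a smooth spatial cutoff $\varphi(x)$ supported in $[\delta, M-\delta]$ and equal to $1$ on $[2\delta, M-2\delta]$; all estimates are carried out for $\varphi \mathbf r$ or for the relevant derivative quantities multiplied by powers of $\varphi$, so that boundary contributions on $\{x=\delta\}$ and $\{x=M-\delta\}$ are killed by $\varphi$ and its derivatives, which are controlled purely in terms of $\delta$. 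Since $|\mathbf r_x| \le M_1$ on the larger rectangle, the coefficient $a(x,t) := (1+|\mathbf r_x|^2)^{-1}$ is bounded between $(1+M_1^2)^{-1}$ and $1$ there, so the equation is uniformly parabolic on the region where the cutoff lives; this is the point where the hypothesis is used essentially.

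The key steps, in order: (i) Rewrite each component equation $r^j_t = a\, r^j_{xx}$ with $a$ uniformly parabolic on $[\delta, M-\delta]\times[0,T)$, and note the gradient bound gives $L^\infty$ control of $\mathbf r$ and $\mathbf r_x$ there. (ii) Differentiate once in $x$: the quantity $\mathbf p := \mathbf r_x$ satisfies a linear uniformly parabolic system $\mathbf p_t = a \mathbf p_{xx} + (\partial_x a)\mathbf p_x$ with coefficients bounded in terms of $M_1$ (here $\partial_x a = -2 a^2 \mathbf r_x\cdot \mathbf r_{xx}$, which is only in $L^2$ a priori, so I would actually first get an interior $L^2$ bound on $\mathbf r_{xx}$ by the standard energy estimate: multiply the equation by $\varphi^2 \mathbf r_{xx}$ and integrate, using $\int \varphi^2 a |\mathbf r_{xx}|^2 \le C(\delta, M_1)$ after integration by parts). (iii) Apply interior De Giorgi--Nash--Moser / Schauder estimates: the $L^2$ bound on $\mathbf r_{xx}$ together with the equation upgrades, via the Krylov--Safonov Harnack-type interior estimate for $\mathbf p$, to a $C^\alpha$ bound on $\mathbf r_x$ on a slightly smaller rectangle; this makes $a$ itself $C^\alpha$ in $(x,t)$. (iv) With $a \in C^\alpha$, interior parabolic Schauder estimates applied to $r^j_t = a r^j_{xx}$ give $\mathbf r \in C^{2+\alpha}$ on $[2\delta, M-2\delta]\times[0,T)$, in particular a uniform bound on $\mathbf r_{xx}$. (v) Finally convert this to a curvature bound: for a graph $(x,\mathbf r(x))$ the curvature vector is $k = \frac{\mathbf r_{xx}}{(1+|\mathbf r_x|^2)}$ projected off the tangent direction (or more precisely $k^2 \le \frac{|\mathbf r_{xx}|^2}{(1+|\mathbf r_x|^2)^2}$), so $|\mathbf r_x| \le M_1$ and the bound on $|\mathbf r_{xx}|$ from step (iv) yield a uniform bound on $k$ on $[2\delta, M-2\delta]\times[0,T)$.

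The main obstacle I expect is the bootstrapping in steps (ii)--(iii): a priori one only has $|\mathbf r_x|$ bounded, so the coefficient $\partial_x a$ in the differentiated equation is not yet known to be bounded, only $L^2$ after the energy estimate, and one must be careful to get $C^\alpha$ regularity of $\mathbf r_x$ through a De Giorgi--Nash--Moser argument for the divergence-form (or non-divergence form, via Krylov--Safonov) equation before the Schauder theory can be invoked. Once the equation is seen to have $C^\alpha$ coefficients this is classical, so the heart of the matter is the localized $L^2$-energy estimate for $\mathbf r_{xx}$ followed by the interior Hölder estimate; these are standard but must be done with cutoffs to avoid any boundary control, since the hypothesis only gives information on the open-ended time interval $[0,T)$ and on a spatial neighborhood, with no control at $t \to T$ or at the lateral boundary.
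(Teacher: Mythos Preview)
Your proposal is correct and follows essentially the same route as the paper: differentiate in $x$, obtain H\"older continuity of $\mathbf r_x$ via De Giorgi--Nash--Moser, then apply interior Schauder estimates to the original equation (now with $C^\alpha$ coefficient $a$) to bound $\mathbf r_{xx}$, and finally bound the curvature from the graph formula.

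The one point worth noting is that your detour through an $L^2$ energy estimate for $\mathbf r_{xx}$ and your concern about the drift $\partial_x a$ in the non-divergence form $\mathbf p_t = a\,\mathbf p_{xx} + (\partial_x a)\,\mathbf p_x$ are unnecessary. The paper observes that after differentiating, each component sits naturally in \emph{divergence form}:
\[
(y_x)_t = \Bigl(\tfrac{1}{1+|\mathbf r_x|^2}\,(y_x)_x\Bigr)_x,
\]
with the scalar coefficient $a = (1+|\mathbf r_x|^2)^{-1}$ merely bounded and measurable (between $(1+M_1^2)^{-1}$ and $1$). Nash--Moser applies directly to this form and yields interior $C^\alpha$ regularity of $y_x$ (and of each $z_{ix}$) without any preliminary control on $\mathbf r_{xx}$. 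This removes the ``main obstacle'' you anticipated in steps (ii)--(iii); once you see the divergence structure, the bootstrapping is immediate.
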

\begin{proof}
By examining each component of the graph flow separately, the function $y$ satisfies the equation:
    $$y_t=\frac{y_{xx}}{1+y_x^2+z_{1x}^2+\cdots+z_{(n-2)x}^2}$$ 
Differentiate both sides with respect to $x$:
    $$(y_x)_t=\left(\frac{1}{1+y_x^2+z_{1x}^2+\cdots+z_{(n-2)x}^2}(y_x)_x\right)_x$$
The function $y_x$ satisfies a parabolic equation in divergence form. 
    
It follows from the Nash-Moser estimates (see, for example \cite{Nashestimates}) 
that $y_x$ is Hölder continuous. Similarly, $z_{ix}$ is Hölder continuous for all $i$ by  examining the other components of the graph flow.

Thus, we can view the equation $$y_t=\frac{y_{xx}}{1+y_x^2+z_{1x}^2+\cdots+z_{(n-2)x}^2}$$
as a linear equation with Hölder continuous coefficient because $y_x, z_{ix}$ and thus the coefficient $\frac{1}{1+y_x^2+z_{1x}^2+\cdots+z_{(n-2)x}^2}$ are Hölder continuous.

The Schauder estimates (see, for example \cite[Theorem 8.11.1]{krylov1996lectures}) yield bounds for second order derivatives of $y$.
Similarly, we can get 
bounds for second order derivatives of $z_i$.

The curvature is
\begin{equation}
\frac{\sqrt{\left(1+|\mathbf{r}_x|^2\right)|\mathbf{r}_{xx}|^2-(\mathbf{r}_x\cdot\mathbf{r}_{xx})^2}}{\left(1+|\mathbf{r}_x|^2\right)^\frac{3}{2}}.    
\end{equation}

Thus the curvature is uniformly bounded since we we have assumed gradient bounds and have derived bounds on the second order derivatives $|\mathbf{r}_{xx}|$.
\end{proof}

\begin{rmk}
    See also \cite[Theorem 4.3]{AltschulerGrayson} which considered the small gradient case with a direct proof without resorting to the theory of parabolic estimates. See also \cite{smoczyk2016curvature}.
\end{rmk}

\section{Proof of Proposition \ref{long time behaviour under three point condition}}
In this section, we use barriers to prove Proposition \ref{long time behaviour under three point condition} by showing that the image of the limit curve $\gb(\cdot,T)$, defined in accordance with Lemma \ref{uniform convergence}, is a single point. The key lemma is Lemma \ref{construction of the barrier}.

Throughout this section, we may assume that the coefficient
$c=x_s^2+y_s^2\geq\delta>0$ and that the projection curve $\gb\att$ is uniformly convex for all $t\in\left[0,T\right)$,
according to Corollary \ref{lower bounds of c} and Theorem \ref{main theorem} $(a)$, 
by selecting $\gamma(\cdot,\epsilon)$ as the initial curve when necessary, for some small $\epsilon>0$. 
\\

Let $D(t)\subset\mathbb{R}^2$ be the closed domain enclosed by the projection curve $\bar{\gamma}\att$
and $\mathring{D}(t)$ be its interior.

Let us first point out that all points on $\bar{\gamma}\att$ are moving inward:
\begin{lem}
\label{$D(t)$ is monotonic}
    For $t_1>t_2$, ${D(t_1)}\subsetneq \mathring{D}(t_2)$.
\end{lem}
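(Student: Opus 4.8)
The plan is to track the support function of the convex region $D(t)$ and show it strictly decreases under the flow. Up to tangential motion the projection curve evolves by $\bar\gamma_{t^\prime}=c\bar k\bar N$ (equation (\ref{evolution of the projection curve up to tangential motion})), where on $[0,T)$ we have $c\ge\delta>0$ and $\bar k>0$, and with the conventions of the paper ($\bar T=(\cos\theta,\sin\theta)$, $\bar N=(-\sin\theta,\cos\theta)$, $\bar k>0$) the vector $\bar N$ points into $\mathring D(t)$; thus every point of $\bar\gamma(\cdot,t)$ moves strictly inward. Fix $t_2<t_1$ in $[0,T)$, choose any origin $O\in\mathbb R^2$, and for $\omega(\theta)=(\cos\theta,\sin\theta)$ set
$$h(\theta,t):=\max_{v}\langle \bar\gamma(v,t)-O,\ \omega(\theta)\rangle,$$
so that, by convexity, $D(t)=\bigcap_{\theta\in S^1}\{x:\langle x-O,\omega(\theta)\rangle\le h(\theta,t)\}$.

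Next I would compute $h_t$. On the compact interval $[t_2,t_1]$ the flow is smooth and each $\bar\gamma(\cdot,t)$ is strictly convex (Theorem \ref{main theorem}$(a)$), so the maximum defining $h(\theta,t)$ is attained at a unique parameter $v^\ast(\theta,t)$ — the point whose outward unit normal is $\omega(\theta)$ — and $h(\theta,\cdot)$ is differentiable in $t$. Since in the parametrization used for equation (\ref{evolution of the projection curve up to tangential motion}) the vector field $\partial_{t^\prime}$ preserves the parameter, $\partial_t\bar\gamma(v,t)=\bar\gamma_{t^\prime}(v,t)=c\bar k\bar N=-c\bar k\,\omega(\theta)$ at $v^\ast$, and the envelope theorem gives
$$h_t(\theta,t)=\big\langle \bar\gamma_{t^\prime}(v^\ast(\theta,t),t),\ \omega(\theta)\big\rangle=-\,c\big(v^\ast,t\big)\,\bar k\big(v^\ast,t\big).$$
Because $c\ge\delta>0$ and $\bar k$ is continuous and strictly positive on the compact set $S^1\times[t_2,t_1]$, there is a constant $\varepsilon_0>0$ with $c\bar k\ge 2\varepsilon_0$ there, hence $h_t(\theta,t)\le-2\varepsilon_0$ for all $\theta$ and all $t\in[t_2,t_1]$. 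Integrating in $t$,
$$h(\theta,t_1)\le h(\theta,t_2)-2\varepsilon_0\,(t_1-t_2)\qquad\text{for every }\theta\in S^1.$$

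Finally I would read off the inclusion. Put $r:=2\varepsilon_0(t_1-t_2)>0$. If $x\in D(t_1)$, then for every $\theta$ and every $y\in\overline{B_r(x)}$,
$$\langle y-O,\omega(\theta)\rangle\le\langle x-O,\omega(\theta)\rangle+r\le h(\theta,t_1)+r\le h(\theta,t_2),$$
so $\overline{B_r(x)}\subseteq D(t_2)$; in particular $x\in\mathring D(t_2)$ and $\operatorname{dist}(x,\partial D(t_2))\ge r$. Thus $D(t_1)$ is contained in the inner parallel body $\{x\in D(t_2):\operatorname{dist}(x,\partial D(t_2))\ge r\}$, which is a proper subset of $\mathring D(t_2)$ (interior points of $D(t_2)$ lying within distance $<r$ of $\partial D(t_2)$ are excluded). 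Hence $D(t_1)\subsetneq\mathring D(t_2)$, as claimed.

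I expect the only delicate step to be the justification of the evolution formula for $h$: one must verify differentiability of the support function in $t$ together with the envelope-theorem computation, which relies on strict convexity (uniqueness of the maximizing point) and on the smoothness of the flow on $[t_2,t_1]\subset[0,T)$, both furnished by Theorem \ref{main theorem}$(a)$ and the standing assumption $c\ge\delta$; the remaining steps are elementary convex geometry. If one prefers to sidestep the uniform lower bound on $c\bar k$, it already suffices to note that $h_t(\theta,t)<0$ pointwise, so $h(\theta,t_1)<h(\theta,t_2)$ for each fixed $\theta$; then for $x\in D(t_1)$ the continuous function $\theta\mapsto h(\theta,t_2)-\langle x-O,\omega(\theta)\rangle$ is strictly positive on $S^1$, giving $x\in\mathring D(t_2)$, and strictness of the inclusion follows because a nonempty compact set cannot equal an open set.
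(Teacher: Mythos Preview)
Your argument is correct and rests on exactly the same observation the paper invokes in its one-sentence proof: the normal velocity of $\bar\gamma$ is $c\bar k\bar N$ with $c\ge\delta>0$ and $\bar k>0$, so the curve moves strictly inward. The paper leaves it at that; you have supplied a clean rigorous implementation via the support function, together with the quantitative inclusion into an inner parallel body. This is a standard and reliable way to formalize the inward-motion heuristic, and your alternative final paragraph (pointwise strict decrease of $h$ plus the compact/open observation) is even closer in spirit to the paper's terse justification.
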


\begin{proof}
    This follows from the equation (\ref{evolution of the projection curve up to tangential motion}), 
    $c\geq\delta>0$ and $\Bar{k}>0$ since $\gb$ is uniformly convex.\\
\end{proof}

Let us define:
\begin{equation}
\label{limit domain D}
D:=\bigcap_{t\in\left[0,T\right)}D(t)
\end{equation}

\begin{lem}
\label{properties of D}
The set $D$ is nonempty, compact and convex.
\end{lem}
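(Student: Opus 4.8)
The plan is to establish the three asserted properties of $D:=\bigcap_{t\in[0,T)}D(t)$ in turn, using the nesting property from Lemma \ref{$D(t)$ is monotonic}, the convexity of each $D(t)$, and the uniform convergence of the curves from Lemma \ref{uniform convergence}.

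\textbf{Compactness.} Each $D(t)$ is closed (it is the closed region bounded by the convex Jordan curve $\bar\gamma(\cdot,t)$, which is compact), and $D(0)$ is bounded. Hence $D$, being an intersection of closed subsets of the compact set $D(0)$, is closed and bounded, therefore compact. I would phrase this in one line.

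\textbf{Convexity.} Each $D(t)$ is convex, since by definition \ref{definitions of three notions of convexity}$(a)$ the convex curve $\bar\gamma(\cdot,t)$ together with its interior is a convex set. An arbitrary intersection of convex sets is convex, so $D$ is convex. Again one line.

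\textbf{Nonemptiness.} This is the only part requiring a small argument, and it is the main obstacle. By Lemma \ref{$D(t)$ is monotonic} the family $\{D(t)\}_{t\in[0,T)}$ is nested decreasing: $D(t_1)\subset D(t_2)$ for $t_1>t_2$. Each $D(t)$ is nonempty and compact. For a nested family of nonempty compact sets indexed by $t\in[0,T)$ the intersection is nonempty: pick an increasing sequence $t_j\uparrow T$; then $\{D(t_j)\}_j$ is a decreasing sequence of nonempty compact sets, so by Cantor's intersection theorem $\bigcap_j D(t_j)\neq\emptyset$; and since the family is nested this equals $\bigcap_{t\in[0,T)}D(t)=D$. (Alternatively one can avoid even this by invoking Lemma \ref{uniform convergence}: the limit curve $\bar\gamma(\cdot,T)$ exists as a uniform limit, and each of its points lies in $D(t)$ for every $t<T$ because the curves are moving strictly inward by Lemma \ref{$D(t)$ is monotonic}, so $\bar\gamma(\cdot,T)\subset D$ and $D\neq\emptyset$.) I would present the Cantor intersection argument as the primary one since it needs nothing beyond Lemma \ref{$D(t)$ is monotonic}, and remark that nonemptiness also follows from the existence of the limit curve. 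The one subtlety to be careful about is that $t$ ranges over the half-open interval $[0,T)$, so one genuinely needs compactness (not just nestedness) to conclude the intersection over this non-closed index set is nonempty — that is exactly what the sequential reduction $t_j\uparrow T$ plus Cantor's theorem supplies.
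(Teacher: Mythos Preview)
Your proof is correct and follows essentially the same approach as the paper: the paper also invokes the Cantor intersection theorem for nonemptiness, notes that each $D(t)$ is closed and contained in the bounded set $D(0)$ for compactness, and uses that each $D(t)$ is convex so their intersection is convex. Your write-up simply supplies a bit more detail (the sequential reduction $t_j\uparrow T$) where the paper is terse.
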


\begin{proof}
Nonemptiness follows from the Cantor Intersection Theorem.
    
For all $t\in\left[0,T\right)$, $D(t)$ is closed and uniformly bounded. So their intersection $D$ is closed and bounded, thus compact. 
    
    For all $ p,q\in D$, let $\overline{pq}$ be the line segment connecting $p$ and $q$. 
    The points $ p,q\in D$ implies $ p,q\in D(t)$, so $\overline{pq}\subset D(t), \forall t\in\left[0,T\right)$, 
    since $D(t)$ is convex.
    Thus $\overline{pq}\subset D$.
\end{proof}

\begin{lem}
\label{p0q0}
    If $D$ contains more than one point, then there exist two different points $p_0,q_0\in D$, such that $d(p_0,q_0)=\sup\limits_{p,q\in D}d(p,q)$, 
    where $d$ is the standard Euclidean distance on $\mathbb{R}^2$.
\end{lem}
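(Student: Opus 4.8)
The plan is to extract the points $p_0,q_0$ as a limit of near-optimal pairs, using the compactness of $D$ established in Lemma \ref{properties of D}. First I would set $R:=\sup_{p,q\in D}d(p,q)$, which is finite because $D$ is bounded. By definition of the supremum, there exist sequences $p_n,q_n\in D$ with $d(p_n,q_n)\to R$. Since $D$ is compact, after passing to a subsequence (not relabeled) we may assume $p_n\to p_0$ and $q_n\to q_0$ for some $p_0,q_0\in D$. Continuity of the Euclidean distance function $d:\mathbb{R}^2\times\mathbb{R}^2\to\mathbb{R}$ then gives $d(p_0,q_0)=\lim_n d(p_n,q_n)=R$, so the supremum is attained.

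It remains to check that $p_0\neq q_0$. This is exactly where the hypothesis that $D$ contains more than one point is used: if $D$ has two distinct points $a,b$, then $R\geq d(a,b)>0$, so $d(p_0,q_0)=R>0$ and hence $p_0\neq q_0$. This completes the proof.

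There is essentially no obstacle here; the statement is a routine application of the extreme value principle for a continuous function on a compact set. The only point requiring a moment's care is ensuring the two extremal points are genuinely distinct, which is what the "more than one point" assumption guarantees. I would also remark that $R$ equals the diameter of $D$, which ties this lemma to the diameter estimates used elsewhere (cf. equation (\ref{diameter control})), though that observation is not needed for the proof itself.

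\begin{proof}
Since $D$ is bounded by Lemma \ref{properties of D}, the quantity $R:=\sup_{p,q\in D}d(p,q)$ is finite. Choose sequences $p_n,q_n\in D$ with $d(p_n,q_n)\to R$. As $D$ is compact by Lemma \ref{properties of D}, after passing to a subsequence we may assume $p_n\to p_0\in D$ and $q_n\to q_0\in D$. By continuity of $d$, we get $d(p_0,q_0)=\lim_{n\to\infty}d(p_n,q_n)=R=\sup_{p,q\in D}d(p,q)$. Finally, since $D$ contains more than one point, $R>0$, so $p_0\neq q_0$.
\end{proof}
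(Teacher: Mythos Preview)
Your proof is correct and follows essentially the same approach as the paper: the paper observes that $D\times D$ is compact and that $d$ is continuous, so the supremum is attained, while you spell out the underlying sequential-compactness argument explicitly. Both arguments are the extreme value theorem applied to $d$ on the compact set $D\times D$, and your additional check that $p_0\neq q_0$ is correct.
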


\begin{proof}
        The set $D$ is compact, so $D\times D$ is compact. 
        
        The function $d:D\times D\rightarrow\mathbb{R}$ is a continuous function,  supremum of which can be achieved on a compact set.\\
\end{proof}

\subsection*{\texorpdfstring{Identification of the limit curve and the boundary of $D$}{Identification of the limit curve and the boundary of D}}
For $M$ a metric space, let $F(M)$ be the collection of all non-empty compact subsets of $M$ with the metric being the \emph{Hausdorff distance} $d_H$. It is known that the metric space $F(M)$ is complete when $M$ is complete.

\begin{lem}
\label{converge in the sense of the Hausdorff distance}
    The compact set $D(t)$ converges to $D$, as $t\rightarrow T$, in the sense of the Hausdorff distance $d_H$.
\end{lem}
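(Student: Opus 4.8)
The plan is to show that $D(t)$ converges to $D$ in the Hausdorff distance by exploiting the monotonicity already established in Lemma \ref{$D(t)$ is monotonic}. Recall that for $t_1 > t_2$ we have $D(t_1) \subsetneq \mathring D(t_2)$, so $\{D(t)\}_{t\in[0,T)}$ is a nested (decreasing) family of nonempty compact convex sets, and $D = \bigcap_{t} D(t)$. By Lemma \ref{properties of D}, $D$ is itself nonempty and compact. Since $D \subseteq D(t)$ for every $t$, one half of the Hausdorff estimate is automatic: $\sup_{q\in D}\operatorname{dist}(q, D(t)) = 0$. So the entire content is to control $\sup_{p\in D(t)}\operatorname{dist}(p, D)$, i.e.\ to show that for every $\varepsilon > 0$ there is a time $t_\varepsilon < T$ such that for all $t \in [t_\varepsilon, T)$ every point of $D(t)$ lies within $\varepsilon$ of $D$.

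The key step is the following compactness argument. Suppose the claim fails: then there is some $\varepsilon_0 > 0$, a sequence $t_n \uparrow T$, and points $p_n \in D(t_n)$ with $\operatorname{dist}(p_n, D) \geq \varepsilon_0$. All the $p_n$ lie in the fixed bounded set $D(0)$, so after passing to a subsequence we may assume $p_n \to p_\infty$ with $\operatorname{dist}(p_\infty, D) \geq \varepsilon_0 > 0$, so $p_\infty \notin D$. Hence there is some time $\tau < T$ with $p_\infty \notin D(\tau)$; since $D(\tau)$ is closed, $\operatorname{dist}(p_\infty, D(\tau)) =: 3\eta > 0$, so eventually $\operatorname{dist}(p_n, D(\tau)) \geq 2\eta$. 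But for $t_n > \tau$ we have $D(t_n) \subseteq D(\tau)$ by Lemma \ref{$D(t)$ is monotonic}, forcing $p_n \in D(\tau)$, a contradiction. This establishes $\lim_{t\to T}\sup_{p\in D(t)}\operatorname{dist}(p, D) = 0$, and combined with the trivial half gives $d_H(D(t), D) \to 0$.

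The main (mild) obstacle is purely a matter of care with the definition of Hausdorff distance and the direction of the inclusions: one must remember that $D \subseteq D(t)$ makes one term vanish identically, so nothing needs to be proved there, and that the nontrivial term is handled by the nestedness together with compactness of the ambient region $D(0)$. No flow-specific input is needed beyond Lemma \ref{$D(t)$ is monotonic} and Lemma \ref{properties of D}; in fact this is a general fact about decreasing nested families of compact sets, and one could alternatively cite that a decreasing sequence of nonempty compact sets converges to its intersection in $F(M)$, invoking completeness of the Hausdorff metric space mentioned just before the statement. I would present the self-contained contradiction argument above since it is short and transparent.
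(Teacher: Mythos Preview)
Your argument is correct, and in fact it is the standard proof that a decreasing nested family of nonempty compact sets in a metric space converges in Hausdorff distance to its intersection. The contradiction step is clean: monotonicity gives $p_n\in D(t_n)\subset D(\tau)$ for $t_n>\tau$, and closedness of $D(\tau)$ then forces $p_\infty\in D(\tau)$, contradicting $p_\infty\notin D(\tau)$.

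The paper takes a genuinely different route. Rather than arguing purely from the nesting $D(t_1)\subset D(t_2)$, it invokes Lemma~\ref{uniform convergence} (the uniform $C^\alpha$ convergence of $\gamma(\cdot,t)$ to a limit curve) to conclude that the map $t\mapsto D(t)$ is Cauchy-continuous on $[0,T)$, extends it to $[0,T]$ by completeness of $F(D(0))$, and then separately verifies via two inclusion claims that the abstract limit $D(T)$ coincides with the intersection $D$. Your approach is more elementary and self-contained: it uses only Lemma~\ref{$D(t)$ is monotonic} and Lemma~\ref{properties of D}, and never touches Lemma~\ref{uniform convergence} or the completeness of the Hausdorff metric. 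The paper's route, on the other hand, makes explicit that the convergence of the enclosed regions is driven by the uniform convergence of the evolving curves themselves, which fits the surrounding narrative (identifying $\gb(S^1,T)$ with $\partial D$) but is logically heavier than necessary for this particular lemma.
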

\begin{proof}

    Consider the map:
    $$D:[0,T)\rightarrow F(D(0))$$
    $$t\rightarrow D(t)$$

    By Lemma \ref{uniform convergence}, the map $D$ is a Cauchy-continuous function, which means that for any Cauchy sequence $(t_1,t_2,\cdots)$ in $[0,T)$, the sequence $(D(t_1),D(t_2),\cdots)$ is a Cauchy sequence in $F(D(0))$. As a result, since $F(D(0))$ is complete, we have a unique extension:
    $$D:[0,T]\rightarrow F(D(0))$$
    $$t\rightarrow D(t)$$
    and $D$ is a uniformly continuous map on $[0,T]$.

    Next, let us show that $D(T)=D$. 

    \begin{clm}
        $D(T)\subset D$.
    \end{clm}
    \begin{proof}[Proof of the claim]
    Since $D(t)$ is shrinking by Lemma \ref{$D(t)$ is monotonic}, we can also consider the map:
    $$D:[t_0,T)\rightarrow F(D(t_0))$$
    $$t\rightarrow D(t)$$
    and its extension to $[t_0,T]$, so $D(T)\subset D(t_0)$, $\forall t_0\in[0,T)$.

    By definition of $D$, $D(T)\subset D$.
    \end{proof}
    \begin{clm}
        $D\subset D(T)$.
    \end{clm}
    \begin{proof}[Proof of the claim]
    Considering $D(t)\rightarrow D(T)$ as $t\rightarrow T$ in $F(D(0))$, $\forall \epsilon>0$, there exists $\delta>0$ such that $\forall t\in[T-\delta,T)$, $D(t)\subset(D(T))_\epsilon$, where $(D(T))_\epsilon$ denotes the $\epsilon$-neighbourhood of $D(T)$.

   Since $D$ is defined to be the intersection of $D(t)$, $D\subset(D(T))_\epsilon$, $\forall \epsilon>0$.

Hence
$D\subset\bigcap\limits_{\epsilon>0}(D(T))_\epsilon= D(T)$.
   \end{proof}

   Lemma \ref{converge in the sense of the Hausdorff distance} is proved since $D(t)\rightarrow D(T)$ as $t\rightarrow T$ in the sense of the Hausdorff distance because $D$ is a continuous function.
\end{proof}

It follows from \cite[Theorem 20]{wills2007hausdorff},
\begin{lem}
\label{Hausdorff distance of convex sets and boundaries}
    For two non-empty compact convex sets $A,B$ in $\mathbb R^2$, $d_H(A,B)=d_H(\partial A, \partial B)$.
\end{lem}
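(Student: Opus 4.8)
The statement to prove is Lemma~\ref{Hausdorff distance of convex sets and boundaries}: for two non-empty compact convex sets $A,B$ in $\mathbb R^2$, $d_H(A,B)=d_H(\partial A,\partial B)$. The excerpt says this follows from \cite[Theorem 20]{wills2007hausdorff}, so the ``proof'' is really a short reduction to that cited result, but here is how I would actually argue it directly.

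The plan is to prove the two inequalities $d_H(A,B)\le d_H(\partial A,\partial B)$ and $d_H(\partial A,\partial B)\le d_H(A,B)$ separately. The second is the easy direction and is purely formal: since $\partial A\subset A$ and $\partial B\subset B$, every point of $\partial A$ lies within distance $d_H(A,B)$ of some point of $B$; one then pushes that nearest point radially (from a fixed interior point, or along the segment to the witness point) out to $\partial B$ without increasing the distance too much — more cleanly, one uses the elementary fact that for a convex body $K$ and any point $p$, the nearest point of $K$ to $p$ and the nearest point of $\partial K$ to $p$ coincide whenever $p\notin\mathring K$, and when $p\in\mathring K$ one has $\operatorname{dist}(p,\partial K)\le\operatorname{dist}(p,\partial B)$ anyway after swapping roles. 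So I would phrase it as: $d_H(\partial A,\partial B)\le d_H(A,B)$ follows because the supremum defining $d_H(\partial A,\partial B)$ is taken over a subset ($\partial A\times\{B\}$ and $\partial B\times\{A\}$, after using $\operatorname{dist}(x,\partial B)\le$ something). The first inequality $d_H(A,B)\le d_H(\partial A,\partial B)$ is the substantive one: I must show that if $\partial A$ and $\partial B$ are Hausdorff-close then $A$ and $B$ are. Set $r:=d_H(\partial A,\partial B)$. I claim $A\subseteq B_r(\partial B)$'s convex hull logic gives $A\subseteq (B)_r$; concretely, take $p\in A$. If $p\in\partial A$, then $\operatorname{dist}(p,\partial B)\le r$ so $\operatorname{dist}(p,B)\le r$. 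If $p\in\mathring A$, I want $\operatorname{dist}(p,B)\le r$; suppose not, so $B_r(p)\cap B=\emptyset$, hence $p\notin B$ and there is a separating line, i.e.\ a closed half-plane $H$ with $B\subset H$ and $\operatorname{dist}(p,H)>r$ — wait, more carefully: since $\operatorname{dist}(p,B)>r$ and $B$ is convex, the hyperplane through the nearest point of $B$ to $p$, perpendicular to the segment, separates $p$ from $B$ with margin $>r$. Now shoot a ray from $p$ in the direction away from $B$ (the direction of that perpendicular); since $A$ is compact this ray exits $A$ at some boundary point $q\in\partial A$, and $q$ is even farther from $B$ than $p$, so $\operatorname{dist}(q,\partial B)\ge\operatorname{dist}(q,B)>r$, contradicting $d_H(\partial A,\partial B)=r$. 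By symmetry $B\subseteq(A)_r$, so $d_H(A,B)\le r=d_H(\partial A,\partial B)$.

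The main obstacle — really the only subtle point — is the argument in the interior case that lets me ``escape to the boundary in a distance-nonincreasing (indeed distance-increasing) direction.'' The key geometric fact is that moving from an interior point of a convex set directly away from a disjoint convex set only increases the distance to that set, which follows from convexity of the distance-to-a-convex-set function; one must make sure the ray actually leaves $A$ (true by compactness) and that the exit point is a genuine boundary point. I would also need the standard facts that $\partial A$ is non-empty and compact (true for a non-empty compact convex set with more than one point; the degenerate cases where $A$ or $B$ is a single point are trivial, and the case of a segment is fine since its ``boundary'' in the relative sense isn't what's used — $\partial$ here means topological boundary in $\mathbb R^2$, which for a segment is the segment itself, so $\partial A=A$ and the identity is immediate). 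Since all of this is standard convex geometry, in the paper I would simply write:

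\begin{proof}
This is \cite[Theorem 20]{wills2007hausdorff}. For completeness we sketch the argument. If either set has empty interior then its topological boundary in $\mathbb R^2$ equals the set itself, and the identity is trivial; so assume $A$ and $B$ both have nonempty interior, whence $\partial A$ and $\partial B$ are nonempty compact sets. Since $\partial A\subseteq A$ and $\partial B\subseteq B$ one has, for every $x\in\partial A$, $\operatorname{dist}(x,B)\le d_H(A,B)$, and symmetrically; a point of $B$ realizing this distance can be replaced by a point of $\partial B$ at no greater distance (the nearest point of a convex body to an exterior point lies on its boundary, and for an interior point the distance to the boundary is at most the distance to any other boundary), so $d_H(\partial A,\partial B)\le d_H(A,B)$. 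Conversely let $r:=d_H(\partial A,\partial B)$ and take $p\in A$. If $p\in\partial A$ then $\operatorname{dist}(p,B)\le\operatorname{dist}(p,\partial B)\le r$. If $p$ is an interior point of $A$ and $\operatorname{dist}(p,B)>r$, pick the nearest point $b\in B$ to $p$; by convexity of $B$ the half-line from $p$ in the direction $p-b$ stays at distance $\ge|p-b|>r$ from $B$, and by compactness of $A$ it meets $\partial A$ at some point $q$, giving $\operatorname{dist}(q,\partial B)\ge\operatorname{dist}(q,B)>r$, a contradiction. Hence $A$ lies in the closed $r$-neighbourhood of $B$, and symmetrically $B$ lies in the closed $r$-neighbourhood of $A$, so $d_H(A,B)\le r=d_H(\partial A,\partial B)$.
\end{proof}
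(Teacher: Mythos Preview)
The paper does not prove this lemma at all; it simply cites \cite[Theorem 20]{wills2007hausdorff}. Your direct argument is therefore more than the paper offers, and the inequality $d_H(A,B)\le d_H(\partial A,\partial B)$ is handled correctly: the ray from $p$ in the direction $p-b$ does increase the distance to $B$ (by the supporting-hyperplane property of the nearest-point projection onto a convex set), and compactness of $A$ forces it to hit $\partial A$.

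There is, however, a genuine gap in your proof of the reverse inequality $d_H(\partial A,\partial B)\le d_H(A,B)$, precisely in the case $x\in\partial A\cap\mathring B$. You assert that ``a point of $B$ realizing this distance can be replaced by a point of $\partial B$ at no greater distance,'' but when $x\in\mathring B$ one has $\operatorname{dist}(x,B)=0$ and the nearest point of $B$ is $x$ itself; there is no point of $\partial B$ at distance $\le 0$ from $x$. The parenthetical ``for an interior point the distance to the boundary is at most the distance to any other boundary'' has no clear meaning and does not rescue the argument. What you must show is $\operatorname{dist}(x,\partial B)\le d_H(A,B)$, and this does \emph{not} follow from $\operatorname{dist}(x,B)\le d_H(A,B)$. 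The fix is to reuse the idea from your other direction: take a supporting line to $A$ at $x$ with outward unit normal $n$; for $0<\rho<\operatorname{dist}(x,\partial B)$ the point $x+\rho n$ lies in $B$ and has $\operatorname{dist}(x+\rho n,A)\ge\rho$, whence $d_H(A,B)\ge\rho$ for all such $\rho$ and thus $d_H(A,B)\ge\operatorname{dist}(x,\partial B)$. Note that this is exactly the direction the paper actually uses (to pass from $d_H(D(t),D)\to 0$ to $d_H(\partial D(t),\partial D)\to 0$), so the gap is not cosmetic.
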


Now we are ready to show:
\begin{lem}
As compact sets, $\gb(S^1,T)=\partial D$.
\end{lem}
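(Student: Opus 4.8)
The plan is to establish the set equality $\gb(S^1,T)=\partial D$ by proving both inclusions, using the convergence results already at hand: $\gamma(\cdot,t)\to\gamma(\cdot,T)$ uniformly (Lemma \ref{uniform convergence}), hence $\gb(\cdot,t)\to\gb(\cdot,T)$ uniformly, together with $D(t)\to D$ in the Hausdorff distance (Lemma \ref{converge in the sense of the Hausdorff distance}) and the identity $d_H(A,B)=d_H(\partial A,\partial B)$ for convex bodies (Lemma \ref{Hausdorff distance of convex sets and boundaries}). The crucial observation is that since each $\gb(\cdot,t)$ is a closed convex curve with $\gb(S^1,t)=\partial D(t)$, the uniform (hence Hausdorff) convergence of the curves forces $\gb(S^1,t)\to\partial D$ in the Hausdorff distance; at the same time $\gb(S^1,t)=\partial D(t)\to\partial D$ by Lemma \ref{converge in the sense of the Hausdorff distance} and Lemma \ref{Hausdorff distance of convex sets and boundaries}. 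Since limits in the complete metric space $F(\mathbb R^2)$ are unique, the two Hausdorff limits $\gb(S^1,T)$ and $\partial D$ must coincide.

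First I would note that $\gb(S^1,t)$ is a compact set for every $t\in[0,T]$, and that for $t<T$ we have $\gb(S^1,t)=\partial D(t)$ because $\gb(\cdot,t)$ is a simple closed convex curve (Theorem \ref{main theorem}(a)). Next I would record that uniform convergence $\max_{u}|\gb(u,t)-\gb(u,T)|\to0$ implies $d_H\!\left(\gb(S^1,t),\gb(S^1,T)\right)\to0$: indeed the Hausdorff distance between the images of two maps on the same domain is bounded by the sup-norm of their difference. Then from Lemma \ref{converge in the sense of the Hausdorff distance} we have $d_H(D(t),D)\to0$, and Lemma \ref{Hausdorff distance of convex sets and boundaries} upgrades this to $d_H(\partial D(t),\partial D)\to0$, i.e. $d_H\!\left(\gb(S^1,t),\partial D\right)\to0$. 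Finally, uniqueness of limits in the complete metric space $F(\mathbb R^2)$ gives $\gb(S^1,T)=\partial D$.

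\begin{proof}
For $t\in[0,T)$, the curve $\gb(\cdot,t)$ is a simple closed convex curve by Theorem \ref{main theorem}$(a)$, so $\gb(S^1,t)=\partial D(t)$. By Lemma \ref{uniform convergence}, $\max_{u\in S^1}|\gb(u,t)-\gb(u,T)|\to0$ as $t\to T$, where $\gb(\cdot,T)=P_{xy}\circ\gamma(\cdot,T)$. Since for any two continuous maps $f,g:S^1\to\mathbb R^2$ one has $d_H\!\left(f(S^1),g(S^1)\right)\leq\max_{u\in S^1}|f(u)-g(u)|$, it follows that
\begin{equation}
d_H\!\left(\gb(S^1,t),\,\gb(S^1,T)\right)\longrightarrow0\qquad\text{as }t\to T.
\end{equation}
On the other hand, Lemma \ref{converge in the sense of the Hausdorff distance} gives $d_H(D(t),D)\to0$ as $t\to T$, and Lemma \ref{Hausdorff distance of convex sets and boundaries} (both $D(t)$ and $D$ are non-empty compact convex sets by Lemma \ref{properties of D}) yields
\begin{equation}
d_H\!\left(\partial D(t),\,\partial D\right)=d_H\!\left(D(t),D\right)\longrightarrow0\qquad\text{as }t\to T.
\end{equation}
Since $\gb(S^1,t)=\partial D(t)$ for $t<T$, the sequence $\gb(S^1,t)$ converges in the complete metric space $F(\mathbb R^2)$ both to $\gb(S^1,T)$ and to $\partial D$. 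By uniqueness of limits, $\gb(S^1,T)=\partial D$.
\end{proof}

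Of the steps above, I expect none to be a genuine obstacle; the only point requiring mild care is that $\gb(S^1,T)$ is exactly the Hausdorff limit of the $\gb(S^1,t)$ — this uses that Hausdorff distance of images is controlled by the uniform distance of parametrizations, which is elementary but worth stating since $\gb(\cdot,T)$ is only Lipschitz, not an immersion, so one cannot a priori identify $\gb(S^1,T)$ with the boundary of any region without this argument.
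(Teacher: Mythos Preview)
Your proof is correct and follows essentially the same approach as the paper: identify $\gb(S^1,t)=\partial D(t)$ for $t<T$, pass to Hausdorff limits on both sides using Lemmas \ref{converge in the sense of the Hausdorff distance} and \ref{Hausdorff distance of convex sets and boundaries} for the right side and uniform convergence (Lemma \ref{uniform convergence}) for the left, then invoke uniqueness of limits. Your write-up is in fact slightly more explicit than the paper's in justifying why uniform convergence of parametrizations implies Hausdorff convergence of images.
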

\begin{proof}
    By Lemma \ref{Hausdorff distance of convex sets and boundaries} and Lemma \ref{properties of D},
    $$d_H(D(t),D)=d_H(\partial D(t),\partial D).$$
    
    By Lemma \ref{converge in the sense of the Hausdorff distance}, $d_H(D(t),D)\rightarrow0$, thus $d_H(\partial D(t),\partial D)\rightarrow0$.
        
    It follows from the Lemma \ref{uniform convergence} that $\gb\att\rightarrow\gb(\cdot,T)$ uniformly, and hence  $\gb(S^1,t)\rightarrow\gb(S^1,T)$ in Hausdorff distance.

    Now we can take limits for both sides of the equality $\partial D(t)=\gb(S^1,t)$ in the sense of the Hausdorff distance. As a result,  $\partial D=\gb(S^1,T)$.\\
\end{proof}

\subsection*{The upper branch}
\begin{defi}
For a vector $\bm{v}\in\mathbb{R}^n$ and $t\in[0,T)$, 
\begin{equation}
(\bm{v}\cdot\gamma)_{max}(t):=\max_{u\in S^1} \bm{v}\cdot\gamma(u,t)
\end{equation}
and
\begin{equation}
(\bm{v}\cdot\gamma)_{min}(t):=\min_{u\in S^1} \bm{v}\cdot\gamma(u,t).
\end{equation}
\end{defi}

For a vector $\bm{v}\in\mathbb{R}^n$,
by applying the maximum principle to the equation 
\begin{equation}
\label{equation of CSF in direction v}
(\bm{v}\cdot\gamma)_t=(\bm{v}\cdot\gamma)_{ss},    
\end{equation}
the maximum of $\bm{v}\cdot\gamma\att$ is nonincreasing and the minimum of $\bm{v}\cdot\gamma\att$ is nondecreasing. Hence we can define:
\begin{defi}
For a vector $\bm{v}\in\mathbb{R}^n$, 
\begin{equation}
(\bm{v}\cdot\gamma)_{max}(T):=\lim\limits_{t\rightarrow T}(\bm{v}\cdot\gamma)_{max}(t)    
\end{equation}
and
\begin{equation}
(\bm{v}\cdot\gamma)_{min}(T):=\lim\limits_{t\rightarrow T}(\bm{v}\cdot\gamma)_{min}(t)    
\end{equation}  
\end{defi}

Given a family of functions that converge uniformly, the operation of taking the supremum can be interchanged with the limit process. As a result, one has
\begin{align}
(\bm{v}\cdot\gamma)_{max}(T)
&=\lim\limits_{t\rightarrow T}(\bm{v}\cdot\gamma)_{max}(t)
=\lim\limits_{t\rightarrow T}\max_{u\in S^1} \bm{v}\cdot\gamma(u,t)\\
&=\max_{u\in S^1}(\bm{v}\cdot\lim\limits_{t\rightarrow T}\gamma(u,t))
=\max_{u\in S^1}(\bm{v}\cdot\gamma(u,T)),    
\end{align}
where $\gamma(u,T)$ is defined in accordance with Lemma \ref{uniform convergence}.\\

\begin{lem}
\label{properties related to v dot gamma}
For a nonzero horizontal vector $\bm{v}$ and $t\in[0,T)$, 

$(a)$ The function $\bm{v}\cdot\gamma(\cdot,t)$ has exactly two critical points: 
one global maximum point and one global minimum point.  

$(b)$ One has that
\begin{equation}
(\bm{v}\cdot\gamma)_{min}(t)<(\bm{v}\cdot\gamma)_{min}(T)
\leq(\bm{v}\cdot\gamma)_{max}(T)<(\bm{v}\cdot\gamma)_{max}(t)    
\end{equation}
\end{lem}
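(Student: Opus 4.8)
The plan is to handle the two parts separately, using the standing hypotheses of this section — uniform convexity of $\gb\att$ and $c=x_s^2+y_s^2\geq\delta>0$ for all $t\in[0,T)$ — together with the strong maximum principle.

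For part $(a)$, since $\bm v$ is horizontal, $\bm v\cdot\gamma=\bm v\cdot\gb$, and one may take $\bm v$ to be a unit vector in the $xy$-plane, say $\bm v=(\cos\alpha,\sin\alpha)$. The idea is to parametrize $\gb\att$ by its turning angle $\theta$; since $\kb>0$, $\theta$ is a strictly increasing function of $\s$ that increases by $2\pi$ around $S^1$. Then $(\bm v\cdot\gb)_{\s}=\bm v\cdot\tb=\cos(\theta-\alpha)$, which vanishes at exactly two points (where $\theta-\alpha\equiv\pm\tfrac{\pi}{2}\ (\mathrm{mod}\ 2\pi)$), so $\bm v\cdot\gamma\att$ has exactly two critical points. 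At each of them $\bm v=\pm\nb$, hence by the Frenet--Serret formulas (\ref{Frenet-Serret formulas of the projection curve}), $(\bm v\cdot\gb)_{\s\s}=\kb\,(\bm v\cdot\nb)=\pm\kb\neq0$, so one is a strict local minimum and the other a strict local maximum. Being the only two critical points of a continuous function on the compact curve, they must be the global minimum and global maximum point. (Alternatively one may invoke Lemma \ref{simple zeros}$(c)$, since $\bm v\cdot\gamma_s(\cdot,0)$ changes sign exactly twice.)

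For part $(b)$, the middle inequality is immediate: $(\bm v\cdot\gamma)_{min}(t)\leq(\bm v\cdot\gamma)_{max}(t)$ for every $t$, and letting $t\to T$ gives $(\bm v\cdot\gamma)_{min}(T)\leq(\bm v\cdot\gamma)_{max}(T)$. For the outer ones, I would fix $t_0\in[0,T)$, put $m:=(\bm v\cdot\gamma)_{max}(t_0)$, and for arbitrary $t_1\in(t_0,T)$ consider $g:=m-\bm v\cdot\gamma$ on $S^1\times[t_0,t_1]$. It is nonnegative at $t_0$, not identically zero there (a uniformly convex planar curve is not contained in a line, so $\bm v\cdot\gb(\cdot,t_0)$ is non-constant), and it solves the equation $g_t=g_{ss}$, which is uniformly parabolic on $S^1\times[t_0,t_1]$ since the flow is smooth and immersed there. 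The strong maximum principle then forces $g>0$ on $S^1\times(t_0,t_1]$, i.e.\ $(\bm v\cdot\gamma)_{max}(t_1)<m$; since $(\bm v\cdot\gamma)_{max}$ is nonincreasing, letting $t_1\to T$ gives $(\bm v\cdot\gamma)_{max}(T)\leq(\bm v\cdot\gamma)_{max}(t_1)<m=(\bm v\cdot\gamma)_{max}(t_0)$. Applying the same argument to $-\bm v$ (or to $\bm v\cdot\gamma-(\bm v\cdot\gamma)_{min}(t_0)$) yields $(\bm v\cdot\gamma)_{min}(t_0)<(\bm v\cdot\gamma)_{min}(T)$.

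The computations are routine; the only point needing attention is ensuring that the strong maximum principle delivers a genuine strict inequality rather than the alternative conclusion $g\equiv0$ — this is exactly where the uniform convexity of the projection (non-constancy of $\bm v\cdot\gb$) is used — and restricting its application to compact time subintervals $[t_0,t_1]\subset[0,T)$ on which the flow is smooth, so that the coefficients of $g_t=g_{ss}$ are bounded.
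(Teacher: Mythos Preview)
Your proposal is correct and follows essentially the same approach as the paper's proof, which is a single sentence: ``The projection curve $\gb\att$ is uniformly convex and one can apply the strict maximum principle to the equation (\ref{equation of CSF in direction v}).'' You have simply unpacked that sentence --- using uniform convexity (via the turning-angle parametrization) for part $(a)$, and the strong maximum principle applied to $(\bm v\cdot\gamma)_t=(\bm v\cdot\gamma)_{ss}$ for the strict inequalities in part $(b)$ --- with appropriate care about non-constancy and about restricting to compact time subintervals.
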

\begin{proof}
The projection curve $\gb\att$ is uniformly convex and one can apply the strict maximum principle to the equation (\ref{equation of CSF in direction v}).
\end{proof}

Based on Lemma \ref{properties related to v dot gamma} $(a)$, let us denote by $u_{max}(t)$ the value of $u$ where $\bm{v}\cdot\gamma\att$ attains its maximum and by $u_{min}(t)$ where it attains its minimum. We may assume 
\begin{equation}
u_{min}(t)<u_{max}(t)<u_{min}(t)+2\pi,
\end{equation}
for each time $t$.\\

To keep our notation straightforward:
$$x=(1,0,0,\cdots,0)\cdot\gamma\hspace{1cm}y=(0,1,0,\cdots,0)\cdot\gamma$$
both of which take the form $\bm{v}\cdot\gamma$.

By Lemma \ref{properties related to v dot gamma}, for each time $t<T$, the set
\begin{equation}
\label{the set that is the union of the upper and lower branch}
\{\gamma(u,t)|x_{min}(T)\leq x(u,t)\leq x_{max}(T)\}    
\end{equation}
has exactly two connected components because the projection curve $\gb\att$ is embedded. In particular, there exists
\begin{equation}
\label{lablelling of u_i(t)}
u_{min}(t)<u_1(t)<u_2(t)<u_{max}(t)<u_3(t)<u_4(t)<u_{min}(t)+2\pi
\end{equation}
as labelled in Figure \ref{upper branch}, such that 
\begin{align}
x(u_1(t),t)&=x(u_4(t),t)=x_{min}(T)\\
x(u_2(t),t)&=x(u_3(t),t)=x_{max}(T)
\end{align}
\begin{figure}[h]
\centering
\includegraphics[width=0.5\textwidth]{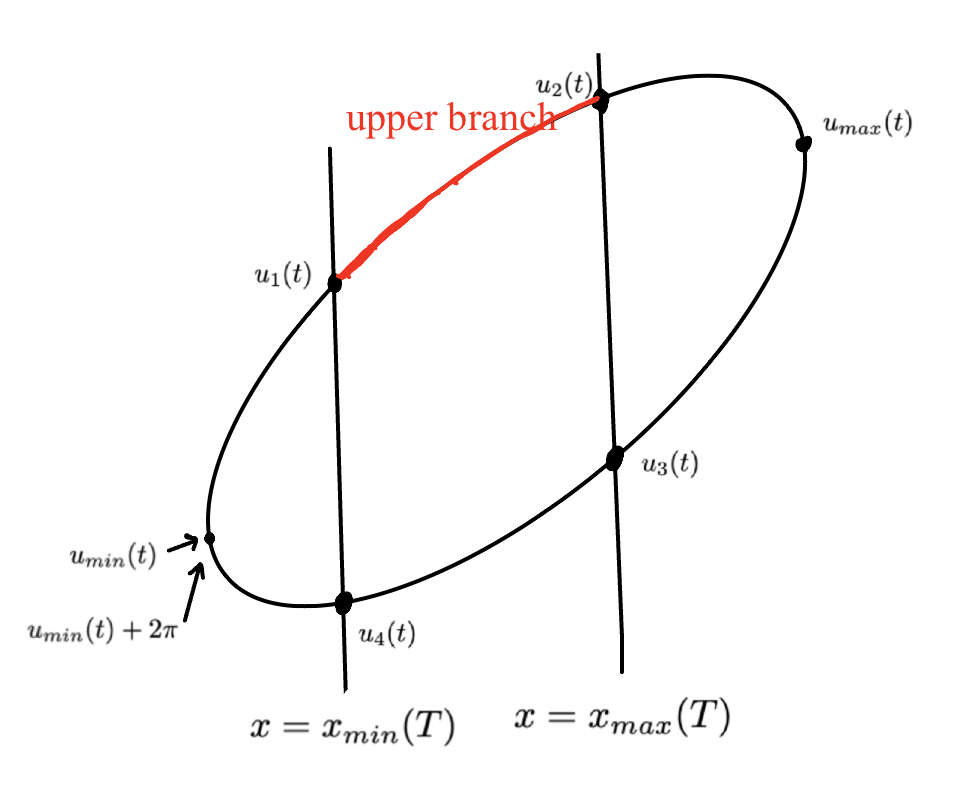} 
\caption{The upper branch}
\label{upper branch} 
\end{figure}
\begin{lem}
\label{one branch is higher}
If $y(u_1(t),t)>y(u_4(t),t)$, then for any $u_{12}(t)\in[u_1(t),u_2(t)]$ and $u_{34}(t)\in[u_3(t),u_4(t)]$ with $x(u_{12}(t),t)=x(u_{34}(t),t)$, one has that $y(u_{12}(t),t)>y(u_{34}(t),t)$.
\end{lem}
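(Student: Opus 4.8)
The plan is to turn the statement into an elementary intermediate–value argument whose only non-trivial ingredient is that the uniformly convex curve $\gb\att$ is \emph{simple}.

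First I would set up the two branches as graphs over the $x$–axis. By Lemma \ref{properties related to v dot gamma}$(a)$, since $\gb\att$ is uniformly convex the height function $x=(1,0,\ldots,0)\cdot\gamma\att$ has exactly two critical points, $u_{min}(t)$ and $u_{max}(t)$; a smooth function on the circle is strictly monotone between two consecutive critical points, so $x\att$ is strictly increasing on $[u_{min}(t),u_{max}(t)]$ and strictly decreasing on $[u_{max}(t),u_{min}(t)+2\pi]$. By the ordering (\ref{lablelling of u_i(t)}) together with $x(u_1(t),t)=x(u_4(t),t)=x_{min}(T)$ and $x(u_2(t),t)=x(u_3(t),t)=x_{max}(T)$, the map $x\att$ therefore restricts to a strictly increasing homeomorphism $[u_1(t),u_2(t)]\to[x_{min}(T),x_{max}(T)]$ and to a strictly decreasing homeomorphism $[u_3(t),u_4(t)]\to[x_{min}(T),x_{max}(T)]$. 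Composing the first with the inverse of the second yields a continuous (strictly decreasing) bijection $\phi:[u_1(t),u_2(t)]\to[u_3(t),u_4(t)]$ determined by $x(\phi(u),t)=x(u,t)$, with $\phi(u_1(t))=u_4(t)$ and $\phi(u_2(t))=u_3(t)$; and for any admissible pair $(u_{12}(t),u_{34}(t))$ as in the statement one necessarily has $u_{34}(t)=\phi(u_{12}(t))$, since $x\att$ is injective on $[u_3(t),u_4(t)]$.

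Next I would run an intermediate–value argument on $g(u):=y(u,t)-y(\phi(u),t)$, a continuous function on $[u_1(t),u_2(t)]$ with $g(u_1(t))=y(u_1(t),t)-y(u_4(t),t)>0$ by hypothesis. The heart of the proof is that $g$ never vanishes: if $g(u^{\ast})=0$ then $\gamma(u^{\ast},t)$ and $\gamma(\phi(u^{\ast}),t)$ have the same $x$– and $y$–coordinates, i.e.\ $\gb(u^{\ast},t)=\gb(\phi(u^{\ast}),t)$, and since the uniformly convex curve $\gb\att$ is simple this forces $u^{\ast}\equiv\phi(u^{\ast})\pmod{2\pi}$; but $u_{min}(t)<u^{\ast}\le u_2(t)<u_{max}(t)<u_3(t)\le\phi(u^{\ast})\le u_4(t)<u_{min}(t)+2\pi$ gives $0<\phi(u^{\ast})-u^{\ast}<2\pi$, a contradiction. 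Hence $g$ is continuous, nowhere zero, and positive at $u_1(t)$, so $g>0$ throughout; evaluating at $u_{12}(t)$ gives $y(u_{12}(t),t)>y(\phi(u_{12}(t)),t)=y(u_{34}(t),t)$.

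I do not expect a genuine obstacle: the argument is short once the graph description is available. The only points that require care are the monotonicity bookkeeping — checking that $x$ increases along the arc $[u_1(t),u_2(t)]$ and decreases along $[u_3(t),u_4(t)]$, so that $\phi$ is well defined — and invoking the simplicity of the \emph{projection} curve $\gb\att$ (legitimate here because it is uniformly convex) rather than of $\gamma\att$, which need not be embedded.
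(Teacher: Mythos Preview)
Your proposal is correct and follows essentially the same approach as the paper: both arguments hinge on the intermediate value theorem together with the simplicity of the projection curve $\gb\att$. The paper's proof is a two-line sketch of the same idea (assume the inequality fails somewhere, use embeddedness of $\gb$ to get a strict reverse inequality, then IVT produces a self-intersection); your version simply makes the graph parametrization and the function $g$ explicit.
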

\begin{proof}
If this lemma were not true, then $y(u_{12}(t),t)<y(u_{34}(t),t)$ because the projection curve $\gb\att$ is embedded.

By the intermediate value theorem and the condition $y(u_1(t),t)>y(u_4(t),t)$, we are able to find a self-intersection of $\gb\att$, which gives a contradiction.
\end{proof}

In the scenario described by Lemma \ref{one branch is higher}, we would say that the component $\{\gamma(u,t)|u_1(t)\leq u\leq u_2(t)\}$ has larger values of $y$ than the component $\{\gamma(u,t)|u_3(t)\leq u\leq u_4(t)\}$. There always exists a component with larger values of $y$ since the projection curve $\gb\att$ is embedded. 
\begin{defi}
We define the \emph{upper branch} of $\gamma\att$ along the direction $(1,0,0,\cdots,0)$ on the interval $[x_{min}(T), x_{max}(T)]$ to be the component of the set in equation (\ref{the set that is the union of the upper and lower branch}) with larger values of $y$.
\end{defi}
See Figure \ref{upper branch} for an illustration of the upper branch.

\subsection*{The barrier}
We will essentially depend on the following barrier:
\begin{lem}
\label{construction of the barrier}
For $M>0$, consider the upper branch of $\gamma\att$ along direction $(1,0,\cdots,0)$ on the interval $[0,M]\subset[x_{min}(T), x_{max}(T)]$, and let us construct a barrier
\begin{equation}
    \varphi(u, t):=\epsilon e^{-\lambda t} \sin \left(\frac{\pi x(u, t)}{M}\right), \hspace{0.5cm}
    \theta:=\frac{\pi x(u, t)}{M} \in[0, \pi],
\end{equation} 
    where $\lambda\geq\frac{\pi^2}{M^2}$ and $\epsilon>0$.
    One has
     $\varphi_t-\varphi_{ss}\leq 0$ on $x\in \left[0,M\right]$.
\end{lem}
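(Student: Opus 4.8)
The plan is a direct computation via the chain rule, with the only structural input being that the coordinate function $x=(1,0,\dots,0)\cdot\gamma$ satisfies the scalar heat-type equation $x_t=x_{ss}$, which is just $\gamma_t=\gamma_{ss}$ tested against the constant vector $(1,0,\dots,0)$. Write $\varphi=g(t)\sin\theta$ with $g(t)=\epsilon e^{-\lambda t}$ and $\theta=\pi x(u,t)/M$.

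First I would differentiate directly in the $(u,t)$ variables. One gets $\varphi_t=g'(t)\sin\theta+g(t)\cos\theta\cdot\tfrac{\pi}{M}x_t$, then $\varphi_s=g(t)\cos\theta\cdot\tfrac{\pi}{M}x_s$, and $\varphi_{ss}=g(t)\bigl[\cos\theta\cdot\tfrac{\pi}{M}x_{ss}-\sin\theta\cdot\tfrac{\pi^2}{M^2}x_s^2\bigr]$. Subtracting and using $x_t=x_{ss}$, the two $\cos\theta$ terms cancel, leaving $\varphi_t-\varphi_{ss}=\sin\theta\bigl[g'(t)+g(t)\tfrac{\pi^2}{M^2}x_s^2\bigr]$.

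Next I would substitute $g'(t)=-\lambda g(t)$ to obtain $\varphi_t-\varphi_{ss}=g(t)\sin\theta\bigl[\tfrac{\pi^2}{M^2}x_s^2-\lambda\bigr]$, and conclude by checking the sign of each factor: $g(t)=\epsilon e^{-\lambda t}>0$; on $x\in[0,M]$ we have $\theta\in[0,\pi]$, so $\sin\theta\geq0$; and since $s$ is the arc-length parameter of the space curve $\gamma$, $x_s^2\leq x_s^2+y_s^2+\sum_i z_{is}^2=1$, hence $\tfrac{\pi^2}{M^2}x_s^2\leq\tfrac{\pi^2}{M^2}\leq\lambda$ by hypothesis. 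Therefore the bracket is $\leq0$ and $\varphi_t-\varphi_{ss}\leq0$ on $x\in[0,M]$.

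There is essentially no obstacle here; the only point requiring care is to avoid invoking the commutation rule of Lemma \ref{commute of operators}, since we never interchange $\partial_t$ and $\partial_s$ --- both act directly on the explicit expression $g(t)\sin(\pi x(u,t)/M)$, and the needed cancellation is precisely the equation $x_t=x_{ss}$. It is also worth recording that the supersolution property is claimed only over $x\in[0,M]$, i.e.\ on the portion of the upper branch where $\theta\in[0,\pi]$ and hence $\sin\theta\geq0$, which is exactly the stated domain.
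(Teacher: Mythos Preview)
Your proof is correct and essentially identical to the paper's: both compute $\varphi_t$ and $\varphi_{ss}$ by the chain rule, use $x_t=x_{ss}$ to cancel the $\cos\theta$ terms, and conclude from $\sin\theta\geq 0$ on $[0,\pi]$ together with $x_s^2\leq 1$ and $\lambda\geq \pi^2/M^2$. The only cosmetic difference is that the paper works with $\theta_t,\theta_s,\theta_{ss}$ while you expand these directly in terms of $x_t,x_s,x_{ss}$.
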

\begin{proof}
    By direct computation:
    $$\varphi_t=\epsilon e^{-\lambda t}\left((\cos\theta) \theta_t-\lambda \sin \theta\right)$$
    $$\varphi_s=\epsilon e^{-\lambda t} (\cos \theta) \theta_s$$
    $$\varphi_{ss}=\epsilon e^{-\lambda t} (-\sin \theta (\theta_s)^2+(\cos \theta) \theta_{ss})$$
Thus,
    $$\varphi_t-\varphi_{ss}=\epsilon e^{-\lambda t} (\sin \theta ((\theta_s)^2-\lambda)+(\cos \theta)(\theta_t- \theta_{ss})).$$
    Notice $$\theta_t- \theta_{ss}=\frac{\pi}{M}(x_t-x_{ss})=0$$
    and since $\lambda\geq\frac{\pi^2}{M^2}$,  $$(\theta_s)^2-\lambda\leq\frac{\pi^2x_s^2}{M^2}-\frac{\pi^2}{M^2}\leq0.$$ 
Therefore,
     $$\varphi_t-\varphi_{ss}\leq0.$$
\end{proof}

\begin{lem}
\label{not double line segment}
If the set $D$ contains two different points $p,q$, assuming
     the straight line passing through points $p,q$ is the x-axis, 
     then $D$ has non-empty interior and $y_{max}(t)> y_{max}(T)>0,\forall t\in\left[0,T\right)$.
\end{lem}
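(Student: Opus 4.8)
The plan is to first reduce the statement to the single inequality $y_{\max}(T)>0$, and then to prove that inequality by a barrier argument built on Lemma~\ref{construction of the barrier}.

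Write $p=(x_p,0)$ and $q=(x_q,0)$ with $x_p<x_q$; this is legitimate since $p\neq q$ and both lie on the $x$-axis. By Lemma~\ref{$D(t)$ is monotonic}, for every $t_0\in[0,T)$ and any $t_1\in(t_0,T)$ we have $D\subseteq D(t_1)\subseteq\mathring{D}(t_0)$, so $D$ — and hence, by convexity of $D$ (Lemma~\ref{properties of D}), the whole segment $\overline{pq}$ — lies in $\mathring{D}(t)$ for every $t\in[0,T)$. Since by the maximum principle $x_{\min}(\cdot)$ is nondecreasing and $x_{\max}(\cdot)$ nonincreasing, and $p,q\in D(t)$ forces $x_{\min}(t)\le x_p<x_q\le x_{\max}(t)$, we get $[x_p,x_q]\subseteq[x_{\min}(T),x_{\max}(T)]$; moreover the strict inequalities of Lemma~\ref{properties related to v dot gamma}$(b)$ give $x_{\min}(t)<x_{\min}(T)$ and $x_{\max}(t)>x_{\max}(T)$, so in fact $[x_p,x_q]\subseteq(x_{\min}(t),x_{\max}(t))$ for every $t$. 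Translating the $x$-coordinate we may assume $x_p=0$ and set $M:=x_q>0$, so $[0,M]\subseteq[x_{\min}(T),x_{\max}(T)]$. Granting $y_{\max}(T)>0$, the other two assertions are immediate: the identification $\gb(S^1,T)=\partial D\subseteq D$ together with $y_{\max}(T)=\max_{u}y(u,T)>0$ produces a point of $D$ off the $x$-axis, which with $p,q$ gives three non-collinear points of the convex set $D$, so $\mathring{D}\neq\emptyset$; and $y_{\max}(t)>y_{\max}(T)$ is precisely Lemma~\ref{properties related to v dot gamma}$(b)$ for the horizontal vector $\bm{v}=(0,1,0,\dots,0)$.

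It remains to show $y_{\max}(T)>0$. Consider the upper branch of $\gamma\att$ along $(1,0,\dots,0)$ over $[0,M]$; by uniform convexity of $\gb\att$ its $xy$-projection is the graph of $\bar f(x,t):=\max\{y:(x,y)\in D(t)\}$, a concave function on $[x_{\min}(t),x_{\max}(t)]$ which, because $[0,M]$ lies strictly inside that interval, is a smooth graph with no vertical tangent over $[0,M]$. For $x\in[0,M]$ we have $(x,0)\in\overline{pq}\subseteq\mathring{D}(t)$, hence $\bar f(x,t)>0$; in particular $\bar f(\cdot,0)$ has a positive minimum $c_0$ on $[0,M]$. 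Take $\lambda=\pi^2/M^2$, fix $\epsilon\in(0,c_0)$, and let $\varphi$ be the barrier of Lemma~\ref{construction of the barrier}, so $\varphi_t-\varphi_{ss}\le 0$ on the part of the upper branch with $x\in[0,M]$. With $y=(0,1,0,\dots,0)\cdot\gamma$ solving $y_t=y_{ss}$, the function $w:=y-\varphi$ satisfies $w_t-w_{ss}\ge 0$ there; on the lateral boundary $x\in\{0,M\}$ we have $\varphi=0$ and $w=\bar f>0$, and at the initial time $w=\bar f(\cdot,0)-\epsilon\sin(\pi x/M)\ge c_0-\epsilon>0$. The parabolic minimum principle (applied on $[0,\tau]$ for each $\tau<T$) gives $w\ge 0$, i.e. $y(u,t)\ge\epsilon e^{-\lambda t}\sin(\pi x(u,t)/M)$ on the upper branch over $[0,M]$. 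Evaluating at the point of this branch with $x=M/2$ gives $y_{\max}(t)\ge\epsilon e^{-\lambda t}$ for all $t\in[0,T)$, and since $T<\infty$ by Corollary~\ref{singularity time is finite}, letting $t\to T$ yields $y_{\max}(T)\ge\epsilon e^{-\lambda T}>0$.

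The main obstacle is making the barrier comparison clean. One must be sure that over $[0,M]$ the upper branch is a genuine smooth graph in $x$ with no vertical tangents — which is exactly why $[0,M]$ is taken strictly inside $(x_{\min}(t),x_{\max}(t))$, using the strict inequalities of Lemma~\ref{properties related to v dot gamma} — and one must note that the lateral boundary values $\bar f(0,t),\bar f(M,t)$ are nonnegative using only $\overline{pq}\subseteq D(t)$, with \emph{no} uniform positive lower bound (this is the delicate point, since the curve is in fact collapsing), while the initial data dominates $\varphi$ for small $\epsilon$. The surrounding bookkeeping of monotonicity facts is already contained in the section.
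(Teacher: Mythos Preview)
Your proposal is correct and follows essentially the same route as the paper: translate so that $x_p=0$, $M=x_q$, compare $y$ with the sine barrier $\varphi$ of Lemma~\ref{construction of the barrier} on the upper branch over $[0,M]$, use $\varphi=0$ at $x\in\{0,M\}$ together with the positivity of the upper branch there (since $\overline{pq}\subset\mathring D(t)$), pick $\epsilon$ small to handle the initial data, and conclude $y_{\max}(T)\ge\epsilon e^{-\lambda T}>0$ via $T<\infty$. Your write-up is in fact more complete than the paper's, which only spells out $y_{\max}(T)>0$ and leaves the non-empty interior of $D$ and the strict inequality $y_{\max}(t)>y_{\max}(T)$ implicit; your reductions for those two points are exactly the intended ones.
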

\begin{proof}
Let $x_p\leq x_q$ be the $x$-coordinates of the points $p,q$.

Let us assume $x_p=0$. Then $M:=x_q$ is positive since $p,q$ are two distinct points on the $x$ axis.

By equation (\ref{limit domain D}), the definition of $D$, we know that $p,q\in D(t), \forall t\in[0,T)$. So $x_{min}(t)<x_p<x_q<x_{max}(t)$. By taking $t\rightarrow T$,
$$x_{min}(T)\leq x_p<x_q\leq x_{max}(T).$$
    
Thus we can consider the upper branch of $\gamma\att$ along the direction of $x$ on $[0,M]=[x_p,x_q]\subset[x_{min}(T),x_{max}(T)]$. We may assume we are analyzing the branch 
\begin{equation}
\{\gamma(u,t)|u_1(t)\leq u\leq u_2(t)\},
\end{equation}
where $u_1(t),u_2(t)$ are as labelled in equation (\ref{lablelling of u_i(t)}).

Let us denote by $u_p(t), u_q(t)$ the values of $u$ such that 
\begin{align}
x(u_p(t),t)=x_p\quad y(u_p(t),t)>0\\
x(u_q(t),t)=x_q\quad y(u_q(t),t)>0.
\end{align}
And we may assume 
\begin{equation}
    u_1(t)\leq u_p(t)\leq u_q(t)\leq u_2(t)
\end{equation}

Let $\varphi$ be the barrier in Lemma \ref{construction of the barrier}, because $y_t=y_{ss}$, 
$$(y-\varphi)_t\geq(y-\varphi)_{ss}, \hspace{0.5cm}
u\in[u_p(t), u_q(t)].$$
    
Since points $p,q$ are on the $x$-axis, $y_p=y_q=0$, where $y_p,y_q$ are the $y$-coordinates of the points $p,q$.

We are analyzing the upper branch, thus on the boundary $$y(u_p(t),t)>0 =y_p\text{ and }y(u_q(t),t)>0=y_q$$
    and we can choose $\epsilon$ (coefficient of $\varphi$) small such that
$$y(u,0)\geq\varphi(u,0)\text{ for }u\in[u_p(0), u_q(0)].$$

Thus on the upper branch, by the maximum principle, $\forall t\in[0,T)$ $$y(u,t)\geq\varphi(u,t) \text{ for }  u\in[u_p(t), u_q(t)].$$ 
    
Hence $$y_{max}(t)\geq \varphi_{max}(t)=\epsilon e^{-\lambda t}\geq\epsilon e^{-\lambda T}>0,$$
    
    where we use the fact that $T<+\infty$ due to Corollary \ref{singularity time is finite}.
    
    Therefore $$y_{max}(T)=\lim\limits_{t\rightarrow T}y_{max}(t)\geq\epsilon e^{-\lambda T}>0.$$
\end{proof}

At this point, Proposition \ref{long time behaviour under three point condition} can be proved by combining Lemma \ref{not double line segment} and \cite[Theorem 3.59]{hättenschweiler2015curve}, which replies on a generalization \cite[Theorem 3.49]{hättenschweiler2015curve} of \cite[Theorem 9.1]{angenent1990parabolic}. Next we present our self-contained proof of Proposition \ref{long time behaviour under three point condition} by establishing gradient bounds using the barrier again.

\subsection*{Gradient bounds}
\begin{lem}
\label{gradient bounds for x direction}
For a constant $\delta>0$ such that $x_{max}(T)-x_{min}(T)>2\delta$,
we can bound $|x_s|$ from below  on a smaller interval
$$\inf\limits_{\substack{x\in[x_{min}(T)+\delta,x_{max}(T)-\delta]\\ t\in[0,T)}}|x_s|>0,$$

and hence bound $|\gamma_x|=|\frac{\gamma_s}{x_s}|=|\frac{1}{x_s}|$ from above
$$\sup\limits_{\substack{x\in[x_{min}(T)+\delta,x_{max}(T)-\delta]\\ t\in[0,T)}}|\gamma_x|<+\infty.$$
\end{lem}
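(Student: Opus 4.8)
The plan is to avoid parabolic estimates altogether: on the upper branch the projection $\gb\att$ is a concave graph over $x$, an elementary convexity estimate bounds its slope away from the endpoints, and the three-point condition then turns this planar bound into a bound on $|\gamma_x|$, equivalently the desired lower bound on $|x_s|=|\gamma_x|^{-1}$. To set things up I would fix $t\in[0,T)$ and apply Lemma \ref{properties related to v dot gamma}$(b)$ with $\bm{v}=(1,0,\cdots,0)$, giving $x_{min}(t)<x_{min}(T)\le x_{max}(T)<x_{max}(t)$. Thus $[x_{min}(T),x_{max}(T)]$ is compactly contained in the $x$-range of $\gb\att$, so on the upper branch over this interval we stay a definite distance from the two points of $\gb\att$ with vertical tangent. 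Since $\gb\att$ is uniformly convex (Theorem \ref{main theorem}$(a)$), $x$ has no critical point along this branch (its only critical points, the two $x$-extrema, lie outside it, by Lemma \ref{properties related to v dot gamma}$(a)$), so the upper branch of $\gamma\att$ over $[x_{min}(T),x_{max}(T)]$ is, for every $t$, a smooth graph $\bigl(x,y(x,t),z_1(x,t),\cdots,z_{n-2}(x,t)\bigr)$ along which $x_s$ is of one sign, say $x_s>0$, and $x\mapsto y(x,t)$ is smooth and concave.

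Next I would bound $y_x$ on the smaller interval. By Lemma \ref{$D(t)$ is monotonic} each curve $\gb\att$, $t\in[0,T)$, lies in $D(0)$, so any two points of $\bigcup_{t<T}\gb\att$ differ in their $y$-coordinates by at most $R_0:=\operatorname{diam}D(0)$; in particular $|y(x_0,t)-y(x_{min}(T),t)|\le R_0$ and $|y(x_{max}(T),t)-y(x_0,t)|\le R_0$. Since $x\mapsto y(x,t)$ is concave, its derivative at $x_0$ lies between the two extreme difference quotients of $y$ over $[x_{min}(T),x_{max}(T)]$, so for $x_0\in[x_{min}(T)+\delta,x_{max}(T)-\delta]$ and every $t\in[0,T)$,
$$|y_x(x_0,t)|\ \le\ \max\!\left\{\frac{|y(x_0,t)-y(x_{min}(T),t)|}{x_0-x_{min}(T)},\ \frac{|y(x_{max}(T),t)-y(x_0,t)|}{x_{max}(T)-x_0}\right\}\ \le\ \frac{R_0}{\delta}.$$
(One could alternatively get the sharper $|y_x|\le y_{max}(t)/\delta$ via the barrier $\varphi$ of Lemma \ref{construction of the barrier}, using that $y\ge\varphi\ge0$ at both endpoints $x=x_{min}(T),x_{max}(T)$ in the situation where this lemma is applied, cf.\ Lemma \ref{not double line segment}; but the crude oscillation bound already suffices.)

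Finally I would transfer this to $\gamma$. By condition $(\Gamma_4)$, which holds here for all $t\in[0,T)$ by Proposition \ref{comparison of different properties of space curves}$(b)$ (see the proof of Corollary \ref{lower bounds of c}), there is a finite constant $\Delta$ with $\sum_i z_{is}^2\le\Delta^2(x_s^2+y_s^2)$; dividing by $x_s^2$ and using $y_x=y_s/x_s$ and $z_{ix}=z_{is}/x_s$, this reads $\sum_i z_{ix}^2\le\Delta^2(1+y_x^2)$. Hence, on $[x_{min}(T)+\delta,x_{max}(T)-\delta]\times[0,T)$,
$$|\gamma_x|^2=1+y_x^2+\sum_{i=1}^{n-2}z_{ix}^2\ \le\ (1+\Delta^2)(1+y_x^2)\ \le\ (1+\Delta^2)\Bigl(1+\frac{R_0^2}{\delta^2}\Bigr)=:C^2 ,$$
and since $|\gamma_x|=|\gamma_s/x_s|=1/|x_s|$ this yields $|x_s|\ge 1/C>0$ and $|\gamma_x|\le C<+\infty$ on the stated set, which is exactly the two assertions.

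The calculations are routine; I expect the only genuinely delicate point to be the uniform-in-$t$ set-up of the first step, namely that the upper branch over the \emph{fixed} interval $[x_{min}(T),x_{max}(T)]$ is, for \emph{every} $t\in[0,T)$, a smooth concave graph avoiding vertical tangents. This hinges on the strict nesting $[x_{min}(T),x_{max}(T)]\subsetneq(x_{min}(t),x_{max}(t))$ from Lemma \ref{properties related to v dot gamma}$(b)$ and on uniform convexity (Theorem \ref{main theorem}$(a)$); once that is secured, the rest is just the elementary fact that a concave function of bounded oscillation has bounded slope in the interior of its domain, plus the one-line reduction of the $z$-directions to the planar bound via the three-point condition.
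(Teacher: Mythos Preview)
Your argument is correct and is genuinely different from the paper's. The paper proves the lower bound on $|x_s|$ by a parabolic comparison: on the upper branch one has $x_{st}=x_{sss}+k^2x_s\ge x_{sss}$, and comparing with the barrier $\varphi=\epsilon e^{-\lambda t}\sin(\pi x/M)$ of Lemma~\ref{construction of the barrier} (checking the boundary and initial inequalities) gives $x_s\ge\varphi\ge\epsilon e^{-\lambda T}\sin(\pi\delta/M)>0$; the finiteness of $T$ (Corollary~\ref{singularity time is finite}) is used explicitly. You instead work time--slice by time--slice: uniform convexity of $\gb\att$ makes the upper branch a concave graph over $[x_{min}(T),x_{max}(T)]$, the elementary chord estimate for concave functions bounds $|y_x|$ by $\operatorname{osc}(y)/\delta\le\operatorname{diam}D(0)/\delta$, and the uniform-in-$t$ bound $\sum_i z_{is}^2\le\Delta^2(x_s^2+y_s^2)$ (which is exactly the standing hypothesis $c\ge\delta$ at the start of \S7, via Corollary~\ref{lower bounds of c}) converts this into a bound on $|\gamma_x|=1/|x_s|$. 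Your route is more elementary for this particular lemma and does not invoke $T<\infty$ here, at the price of leaning on the earlier three--point/Corollary~\ref{lower bounds of c} machinery; the paper's route is more self--contained at this step and reuses the same barrier that drives Lemma~\ref{not double line segment}. The one subtle point you flagged---that the upper branch over the fixed interval is a smooth graph with $x_s$ of one sign for every $t$---is indeed handled by Lemma~\ref{properties related to v dot gamma}: the only zeros of $x_s$ are the global extrema $x_{min}(t),x_{max}(t)$, which lie strictly outside $[x_{min}(T),x_{max}(T)]$.
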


\begin{proof}
We may assume $x_{min}(T)=0$. Then $M:=x_{max}(T)>2\delta$.

   Let $\varphi$ be as in Lemma \ref{construction of the barrier}, so $\varphi_t-\varphi_{ss}\leq 0, x\in \left[0,M\right]$.

We can consider the upper branch of $\gamma\att$ on $x\in \left[0,M\right]$ and may assume $x_s\att>0$. The other $x_s<0$ branch can be analyzed similarly.
Say the upper branch is parametrized by $u\in[u_1(t), u_2(t)]$, where $u_1(t),u_2(t)$ are as labelled in equation (\ref{lablelling of u_i(t)}).

    Considering $x_{st}=x_{ts}+k^2x_s=x_{sss}+k^2x_s\geq x_{sss}$, which implies that
    \begin{equation}
    \label{compare with barriers}
        (x_s-\varphi)_t\geq (x_s-\varphi)_{ss}, \text{ for }u\in[u_1(t), u_2(t)].
    \end{equation}
    On the boundary $$x_s(u_1(t),t)> 0=\varphi(x=0,t) \text{ and }x_s(u_2(t),t)> 0=\varphi(x=M,t)$$ 
    and we can choose $\epsilon$ small such that at initial time,
    $$x_s\geq\varphi\text{ on } u\in[u_1(t), u_2(t)].$$

    Thus, by the maximum principle, $\forall t\in\left[0,T\right)$,
    $$x_s\geq\varphi \text{ on } u\in[u_1(t), u_2(t)].$$
    
    Notice for $\delta>0$ and $x\in\left[\delta,M-\delta\right]$, $$\sin(\theta)\geq\sin(\frac{\pi\delta}{M})>0.$$
    
    So $$x_s\geq\varphi\geq\epsilon e^{-\lambda T}\sin(\frac{\pi\delta}{M})>0,$$
    
    where $T<+\infty$ due to Corollary \ref{singularity time is finite} plays a crucial role.
\end{proof}

The previous lemma works for vectors other than $(1,0,\cdots,0)$ with identical proof that we omit. That is to say:
\begin{lem}
\label{gradient bounds}
For any unit horizontal vector $\bm{v}$ and any constant $\delta>0$ such that $$(\bm{v}\cdot\gamma)_{max}(T)-(\bm{v}\cdot\gamma)_{min}(T)>2\delta,$$
we can bound the gradient:
\begin{equation}
\sup\limits_{\substack{  \\ (u,t)\in J_\delta}}\frac{|\gamma_s(u,t)|}{|(\bm{v}\cdot\gamma)_s(u,t)|}
=\sup\limits_{\substack{  \\ (u,t)\in J_\delta}}\frac{1}{|(\bm{v}\cdot\gamma)_s(u,t)|}<+\infty,
\end{equation}
where $J_\delta=\{(u,t)|t\in\left[0,T\right), \bm{v}\cdot\gamma(u,t)\in\left[(\bm{v}\cdot\gamma)_{min}(T)+\delta,(\bm{v}\cdot\gamma)_{max}(T)-\delta\right]\}$.\\ 
\end{lem}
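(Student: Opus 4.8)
The plan is to replay the proof of Lemma~\ref{gradient bounds for x direction} verbatim, with the coordinate function $x$ replaced throughout by the height function $w:=\bm{v}\cdot\gamma$. First I would translate coordinates so that $(\bm{v}\cdot\gamma)_{min}(T)=0$ and set $M:=(\bm{v}\cdot\gamma)_{max}(T)>2\delta$. Because $\bm{v}$ is a constant vector, $w$ still solves $w_t=w_{ss}$, and because $|w_s|=|\bm{v}\cdot\gamma_s|\le|\bm{v}|\,|\gamma_s|=1$, the barrier computation of Lemma~\ref{construction of the barrier} applies word for word: its only inputs are $\theta_t-\theta_{ss}=\tfrac{\pi}{M}(w_t-w_{ss})=0$ and $(\theta_s)^2=\tfrac{\pi^2}{M^2}(w_s)^2\le\tfrac{\pi^2}{M^2}\le\lambda$, so that
\[
\varphi(u,t):=\epsilon e^{-\lambda t}\sin\!\Big(\frac{\pi w(u,t)}{M}\Big),\qquad \lambda\ge\frac{\pi^2}{M^2},\quad \epsilon>0,
\]
is a subsolution, $\varphi_t-\varphi_{ss}\le0$, on $\{0\le w\le M\}$.

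Next I would reproduce the branch decomposition. By Lemma~\ref{properties related to v dot gamma}$(a)$, for each $t<T$ the function $\bm{v}\cdot\gamma\att$ has exactly two critical points, and by part $(b)$ one has $(\bm{v}\cdot\gamma)_{min}(t)<0<M<(\bm{v}\cdot\gamma)_{max}(t)$; hence $\{u\in S^1:0\le w(u,t)\le M\}$ is a union of two closed arcs, on each of which $w_s$ keeps a constant sign, and at the two endpoints of each arc $w_s\ne0$ since there $w$ equals $0$ or $M$, values that are not critical. On the arc where $w_s>0$, parametrized by $u\in[a(t),b(t)]$ with $w(a(t),t)=0$ and $w(b(t),t)=M$, Lemma~\ref{commute of operators} gives $w_{st}=w_{sss}+k^2w_s\ge w_{sss}$, hence $(w_s-\varphi)_t\ge(w_s-\varphi)_{ss}$ on the space-time strip swept out by this arc; on its lateral boundaries $w_s>0=\varphi$, and $\epsilon$ can be chosen small enough that $w_s\ge\varphi$ on $[a(0),b(0)]$ at $t=0$. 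The maximum principle then gives $w_s\ge\varphi$ on the arc for all $t\in[0,T)$, and on $\{w\in[\delta,M-\delta]\}$ this yields
\[
w_s\ge\epsilon e^{-\lambda T}\sin\!\Big(\frac{\pi\delta}{M}\Big)>0,
\]
where $T<+\infty$ by Corollary~\ref{singularity time is finite}. The other arc, on which $w_s<0$, is treated identically with the barrier $-\varphi$. Combining the two arcs gives $|(\bm{v}\cdot\gamma)_s|=|w_s|\ge\epsilon e^{-\lambda T}\sin(\pi\delta/M)>0$ throughout $J_\delta$, which makes both (equal) suprema in the statement finite, since $|\gamma_s|=1$.

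I do not expect a genuine obstacle: the whole content is that the barrier of Lemma~\ref{construction of the barrier} and the two-arc structure of $\{0\le w\le M\}$ are insensitive to which horizontal direction $\bm{v}$ one uses, and both facts are supplied by Lemma~\ref{properties related to v dot gamma}. The one place that warrants a line of care is verifying that the endpoints $a(t),b(t)$ of a branch are noncritical points of $w\att$, so that the boundary comparison $w_s>0=\varphi$ is strict; this is exactly where the strict inequalities $(\bm{v}\cdot\gamma)_{min}(t)<(\bm{v}\cdot\gamma)_{min}(T)\le(\bm{v}\cdot\gamma)_{max}(T)<(\bm{v}\cdot\gamma)_{max}(t)$ of Lemma~\ref{properties related to v dot gamma}$(b)$ are invoked.
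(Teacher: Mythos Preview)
Your proposal is correct and matches the paper exactly: the paper states that Lemma~\ref{gradient bounds} ``works for vectors other than $(1,0,\cdots,0)$ with identical proof that we omit,'' and what you have written is precisely that identical proof, replacing $x$ by $w=\bm{v}\cdot\gamma$ and reusing the barrier of Lemma~\ref{construction of the barrier} together with the two-arc structure supplied by Lemma~\ref{properties related to v dot gamma}. Your explicit remark about why the branch endpoints are noncritical (via the strict inequalities in Lemma~\ref{properties related to v dot gamma}$(b)$) is a welcome clarification of a point the paper leaves implicit.
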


\subsection*{Proof of Proposition \ref{long time behaviour under three point condition}}
\begin{lem}
\label{uniform curvature}
   If $D$ contains two different points, then the curvature of $\gamma(\cdot,t)$ is uniformly bounded, $\forall t\in\left[0,T\right)$.
\end{lem}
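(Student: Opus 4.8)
The plan is to derive the curvature bound from the gradient estimates of Lemma~\ref{gradient bounds for x direction} and Lemma~\ref{gradient bounds} together with the parabolic curvature estimate of Proposition~\ref{curvature bounds}, treating the whole curve by means of a finite family of horizontal graph directions. First I would extract ``room'' in every horizontal direction. Since $D$ contains two distinct points, Lemma~\ref{not double line segment} gives that $D$ has non-empty interior, so there are $p_0\in\mathbb R^2$ and $\rho>0$ with the closed disk $\bar B_{5\rho}(p_0)\subset D$. Because $D\subset D(t)$ and $\partial D(t)=\gb(S^1,t)$ for every $t$, each unit horizontal vector $\bm v$ satisfies $(\bm v\cdot\gamma)_{min}(t)=\min_{p\in D(t)}\bm v\cdot p\le\bm v\cdot p_0-5\rho$, and letting $t\to T$ one gets $(\bm v\cdot\gamma)_{min}(T)\le\bm v\cdot p_0-5\rho$; symmetrically $(\bm v\cdot\gamma)_{max}(T)\ge\bm v\cdot p_0+5\rho$. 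In particular the range of $\bm v\cdot\gamma$ over the limit curve has length at least $10\rho$, so Lemma~\ref{gradient bounds} is applicable, with its parameter $\delta$ taken to be $\rho$, in \emph{every} horizontal direction.

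Next I would fix a sufficiently fine finite set of unit horizontal vectors $\bm v_1,\dots,\bm v_k$ (angular mesh smaller than $\arcsin(\rho/R)$, where $R:=\operatorname{diam}D(0)<\infty$) so that every $q\in\mathbb R^2$ with $|q|\le R$ has $|\bm v_j\cdot q|\le\rho$ for some $j$. For each $j$, Lemma~\ref{gradient bounds} yields $c_j\in(0,1]$ with $|(\bm v_j\cdot\gamma)_s|\ge c_j$ on the strip where $\bm v_j\cdot\gamma\att\in[(\bm v_j\cdot\gamma)_{min}(T)+\rho,\ (\bm v_j\cdot\gamma)_{max}(T)-\rho]$. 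Applying a rotation of $\mathbb R^n$ that maps the $xy$-plane to itself and sends $\bm v_j$ to $(1,0,\dots,0)$, under which space CSF is invariant, Lemma~\ref{properties related to v dot gamma} shows that for each $t$ this strip is the union of the upper and lower branches of $\gamma\att$, on each of which $\gamma\att$ is a graph over $\xi:=\bm v_j\cdot\gamma$ solving the graph flow~(\ref{the equation of the graph flow}) with $|\mathbf r_\xi|^2=\xi_s^{-2}-1\le c_j^{-2}-1$. Proposition~\ref{curvature bounds}, with its parameter again taken to be $\rho$ (legitimate since the strip has length at least $8\rho$), then bounds the curvature of $\gamma\att$ by a constant $K_j$, uniformly in $t\in[0,T)$, on the smaller strip where $\bm v_j\cdot\gamma\att\in[(\bm v_j\cdot\gamma)_{min}(T)+3\rho,\ (\bm v_j\cdot\gamma)_{max}(T)-3\rho]$.

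Finally comes the covering step: for arbitrary $(u,t)\in S^1\times[0,T)$, the vector $q:=P_{xy}(\gamma(u,t))-p_0$ lies in $\bar B_R(0)$ since $P_{xy}(\gamma(u,t)),p_0\in D(0)$; choosing $j$ with $|\bm v_j\cdot q|\le\rho$ and invoking $\bar B_{5\rho}(p_0)\subset D$, one obtains $\bm v_j\cdot\gamma(u,t)\in[(\bm v_j\cdot\gamma)_{min}(T)+3\rho,\ (\bm v_j\cdot\gamma)_{max}(T)-3\rho]$, whence the curvature at $\gamma(u,t)$ is at most $\max_i K_i$. I expect the main obstacle to be the simultaneous bookkeeping of these nested $\rho$-strips: one must guarantee at once that finitely many directions already cover the curve (this is where $\operatorname{diam}D(0)<\infty$ enters), that the constants $c_j,K_j$ may be taken uniform (the reason to argue with a finite family rather than with all directions at once), and that the strip on which Proposition~\ref{curvature bounds} actually delivers a bound still contains the $\bm v_j$-image of every point of every $\gamma\att$; the particular choice $\bar B_{5\rho}(p_0)\subset D$ is calibrated precisely to make all three requirements compatible. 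A minor point to record is that the reduction made at the start of $\S7$ (restarting the flow at a small positive time) loses nothing here, since on the omitted initial interval the flow is smooth and its curvature is bounded there anyway.
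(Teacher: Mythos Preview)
Your argument is correct, but it proceeds along a genuinely different route from the paper's. The paper does not use an interior ball of $D$ or a large finite family of directions; instead it invokes Lemma~\ref{p0q0} to pick diameter-realizing points $p_0,q_0\in D$, takes the $x$-axis through them, and covers the whole curve with only the \emph{two} orthogonal directions $x$ and $y$. The point of the diameter choice (highlighted in the Remark following the proof) is that each of the supporting lines $x=x_{min}(T)$ and $x=x_{max}(T)$ meets $\partial D$ in a single point; this forces $|y|$ to be small near those lines, so a $y$-graph strip covers the ``tips'' that the $x$-graph strip misses. The paper then needs Lemma~\ref{uniform convergence} and an auxiliary time $t_0$ to make this covering effective. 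Your approach trades that geometric observation for more directions: an interior disc $\bar B_{5\rho}(p_0)\subset D$ plus a fine angular mesh yields a covering that works for every $t\in[0,T)$ without any limiting argument, at the cost of invoking Lemma~\ref{gradient bounds} and Proposition~\ref{curvature bounds} once per direction rather than twice. Either way the same two ingredients (barrier-based gradient bounds and parabolic interior estimates) do the work; your version is more elementary and uniform in $t$, while the paper's is more economical in the number of charts.
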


\begin{proof}
    Pick $p_0,q_0\in D$ as in Lemma \ref{p0q0}. 
    We may assume points $p_0,q_0$ are on the $x$-axis and $x_{p_0}<x_{q_0}$, where  $x_{p_0},x_{q_0}$ stand for the $x$ coordinates of these two points. By choice of $p_0,q_0$, $x_{min}(T)=x_{p_0}$ and $x_{max}(T)=x_{q_0}$.
    
    Since $D$ contains two different points, we can use Lemma \ref{not double line segment} for $p_0,q_0$. As a result, $y_{max}(T)>0$, $|y_{min}(T)|>0$ and $D$ has non-empty interior. Thus $\gb(\cdot,T)=\partial D$ is a Lipschitz continuous convex curve.
    
    Pick a constant $\delta_1>0$ such that $$2\delta_1<min\{y_{max}(T),|y_{min}(T)|\}.$$
    
    Notice that the intersection of the line $x=x_{min}(T)$ with the curve $\gb(\cdot,T)$ is exactly the single point $p_0$, 
    otherwise we get a contradiction against the choice of points $p_0,q_0$: $d(p_0,q_0)=\sup\limits_{p,q\in D}d(p,q)$.
    
    So we can pick a constant $\delta_2>0$ such that 
    $$|y|({x=x_{min}(T)+2\delta_2},t=T)<min\{y_{max}(T),|y_{min}(T)|\}-2\delta_1$$
    and 
    $$|y|({x=x_{max}(T)-2\delta_2},t=T)<min\{y_{max}(T),|y_{min}(T)|\}-2\delta_1.$$
    
    Combining Lemma \ref{uniform convergence}, there exists a $t_0$,
    such that $\forall t\in\left[t_0,T\right)$, the sets
    $$ \{u|x(u,t)\in\left[x_{min}(T)+2\delta_2,x_{max}(T)-2\delta_2\right]\}$$ 
    and
    $$ \{u|y(u,t)\in\left[y_{min}(T)+2\delta_1,y_{max}(T)-2\delta_1\right]\}$$ covers
    the whole $S^1$.
    
    Next we apply Lemma \ref{gradient bounds} and Proposition \ref{curvature bounds} 
    to direction along $x$ axis choosing $\delta=\delta_2$ and $y$ axis choosing $\delta=\delta_1$. These give the global curvature bounds.
\end{proof}

\begin{rmk}
    In previous proof, picking $p_0,q_0\in D$ as in Lemma \ref{p0q0} is important.
    For example, when $D$ is a square and we choose $x$ axis to be parallel to sides of the square,
    it would not be possible to cover the whole curve by graphs over $x$ axis and $y$ axis, 
    no matter how small we choose $\delta_1,\delta_2$. In this case, 
    picking $p_0,q_0\in D$ as in Lemma \ref{p0q0},
    we are choosing a diagonal of the square to be $x$ axis, which enables the proof to proceed.
\end{rmk}

\begin{proof}[Proof of Proposition \ref{long time behaviour under three point condition}]
    Combine Lemma \ref{bounded curvature can extend flow} and Lemma \ref{uniform curvature},
     $D$ contains 
     at most one point.
     By Lemma \ref{properties of D}, $D$  is nonempty, thus contains
     exactly one point.

    Thus $\gb(\cdot,T)=\partial D=D$, which proves 
    Proposition \ref{long time behaviour under three point condition}.
\end{proof}

\bibliographystyle{alpha}
\bibliography{spaceCSF}
\end{document}